\DeclareMathOperator{\trace}{Tr}
\DeclareMathOperator{\sub}{sub}
\begin{document}
\begin{abstract}
In this paper, we establish positive results for two spectral inverse problems in the presence of a magnetic potential. Exploiting the principal wave trace invariants, we first observe that on closed Anosov manifolds with simple length spectrum, one can recover an electric and a magnetic (up to a natural gauge) potential from the spectrum of the associated magnetic Schr\"odinger operator. This simple observation extends a particular instance of a recent positive result on the spectral inverse problem for the Bochner Laplacian in negative curvature, obtained by M.Ceki\'c and T.Lefeuvre (2023). Similarly, we prove that the spectrum of the magnetic Dirichlet-to-Neumann map (or magnetic Steklov operator) on a compact Riemannian manifold with boundary determines both a magnetic potential (up to gauge) and an electric potential at the boundary, provided the latter is Anosov with simple length spectrum. Under this assumption, one can actually show that the magnetic Steklov spectrum determines the full Taylor series at the boundary of any smooth magnetic field and electric potential. As a simple consequence, in this case, both an analytic magnetic field and an analytic electric potential are uniquely determined by their Steklov spectrum.
\end{abstract}
\theoremstyle{plain}
\newtheorem{thm}{Theorem}[section]
\newtheorem{prop}[thm]{Proposition}
\newtheorem{lem}[thm]{Lemma}
\newtheorem{clry}[thm]{Corollary}

\newtheorem{hyp}{Assumption}

\theoremstyle{definition}
\newtheorem{rem}[thm]{Remark}
\newtheorem{deft}[thm]{Definition}
\numberwithin{equation}{section}
\newcommand{\eps}{\varepsilon}
\newcommand{\e}{\mathrm{e}}
\renewcommand{\d}{\partial}
\newcommand{\dd}{\mathrm{d}}
\newcommand{\re}{\mathop{\rm Re} }
\newcommand{\im}{\mathop{\rm Im}}
\newcommand{\ran}{\mathop{\rm ran}} 
\newcommand{\I}{\mathrm{I}} 
\newcommand{\R}{\mathbb{R}}
\newcommand{\C}{\mathbb{C}}
\newcommand{\N}{\mathbb{N}} 
\newcommand{\Z}{\mathbb{Z}} 
\newcommand{\D}{\mathcal{C}^{\infty}_0} 
\renewcommand{\O}{\mathcal{O}}
\newcommand{\tr}[1]{\prescript{\rm t}{}{#1}}

\def\restriction#1#2{\mathchoice
              {\setbox1\hbox{${\displaystyle #1}_{\scriptstyle #2}$}
              \restrictionaux{#1}{#2}}
              {\setbox1\hbox{${\textstyle #1}_{\scriptstyle #2}$}
              \restrictionaux{#1}{#2}}
              {\setbox1\hbox{${\scriptstyle #1}_{\scriptscriptstyle #2}$}
              \restrictionaux{#1}{#2}}
              {\setbox1\hbox{${\scriptscriptstyle #1}_{\scriptscriptstyle #2}$}
              \restrictionaux{#1}{#2}}}
\def\restrictionaux#1#2{{#1\,\smash{\vrule height .8\ht1 depth .85\dp1}}_{\,#2}} 
\title[]{Magnetic spectral inverse problems on compact Anosov manifolds}
\author[]{David Dos Santos Ferreira \and Benjamin Florentin}
\address{Université de Lorraine, CNRS, Inria, IECL, F-54000 Nancy, France}
\email{ddsf@math.cnrs.fr}
\address{Université de Lorraine, CNRS, Inria, IECL, F-54000 Nancy, France}
\email{benjamin.florentin@univ-lorraine.fr}

\maketitle
\setcounter{tocdepth}{1} 
\tableofcontents

\section{Introduction}

\subsection{Spectral inverse problems on compact manifolds}\label{Section 1.1}

In a famous article in Topology \cite{GK1}, Guillemin and Kazhdan addressed the spectral inverse problem of determining a metric $g$ or a potential $q$ from the spectrum of the Schr\"odinger operator $-\Delta_g+q$ on a closed Riemannian manifold. 
They were able to show the absence of nontrivial isospectral deformations on surfaces of negative curvature, and under the additional assumption that the length spectrum is simple, the identifiability of the potential. A metric is said to have \textit{simple length spectrum} if all its closed geodesics (including iterated ones) have different lengths. It is well-known that this assumption is satisfied by generic Riemannian metrics (cf. \cite{Ab}, \cite[Lemma 4.4.3]{Kli} or \cite{An}).
The restriction of spectral inverse problems to isospectral deformations is, in fact, a linearization of the problem of determining the principal symbol of the Schr\"odinger operator.  
Identifying potential from the spectrum also leads to a linear tomography problem involving the geodesic X-ray transform.
The article \cite{GK2}, which followed \cite{GK1}, dealt with the higher dimensional case.  
Many similar results, in particular compactness results of isospectral sets (see \cite{BrPeYa} and \cite{Sha}), use spectral invariants coming from the heat trace. 
Unfortunately these invariants only provide information about averages (over the whole manifold) of local invariants, which are geometric quantities given by polynomials in the underlying metric (cf. the instructive surveys \cite{Zeld,Zeld15} or \cite{DatHez}).
Both results in \cite{GK1,GK2} as well as in \cite{G1,G2,G3} are rather based on the wave trace formula of Duistermaat and Guillemin \cite{DG} which provides a careful analysis of the singularities of the trace of the wave operator as a distribution in time.
This trace formula gives information on the average of the subprincipal symbol over periodic Hamiltonian integral curves of the principal symbol,  i.e.  cogeodesic curves, provided these curves are isolated and satisfy some nondegeneracy assumptions (see Section~\ref{SecDGTrace}).

More recently, Paternain,  Salo and Uhlmann \cite{PatSalUhl2} were able to extend the result on absence of isospectral deformations of \cite{GK1} to the case of closed Anosov surfaces. This relies on a difficult injectivity result for the geodesic $X$-ray transform on tensors of order 2 on closed surfaces with Anosov geodesic flow. Recall that a flow $\left(\phi_{t}\right)$ generated by a smooth vector field $H$ on a closed manifold $\mathcal{M}$ is Anosov if there is a continuous flow-invariant splitting 
\[ T\mathcal{M}=\mathbb{R}H\oplus E_{s}\oplus E_{u} \]
of the tangent bundle $T\mathcal{M}$, into three flow-invariant subbundles, respectively, tangential to the flow direction, exponentially contracting and expanding. If $\mathcal{M}$ denotes the unit (co)tangent bundle of a Riemannian manifold and $H$ the associated geodesic vector field then the manifold is said to be Anosov. It is well-known that closed manifolds with negative sectional curvature are Anosov (see \cite[Theorem 5.2.4]{FisherHass}, \cite[Section 17.6]{KH} or \cite{Kni} for a proof).
In all the aforementioned results using wave trace formula techniques, the subprincipal symbol of the operator under scope vanishes.
This cancellation both simplifies the spectral inverse problem but also provides less information on the quantities to recover from the spectrum.
Using this observation, the second author initiated in \cite{Flo} the analysis of the Steklov spectral inverse problem from the Duistermaat and Guillemin's trace formula. 
This is indeed an instance where the subprincipal symbol does not vanish and leads to a nonlinear tomography inverse problem, in contrast with the classical spectral inverse problem on the Laplacian.

In this article, we are interested in spectral inverse problems for other operators whose subprincipal symbol is nonzero but are parametrized by a smooth $1$-form, while the metric is fixed; the first one is the magnetic Schr\"odinger operator (see Section \ref{Section 1.3}) and the second is the magnetic Steklov operator (see Section \ref{Section 1.4}).
One of the epitome of spectral inverse results on magnetic operators is 
\cite{Shi} where it is shown that if the first eigenvalue of a magnetic Schr\"odinger operator is zero, that is, is equal to the first eigenvalue of a Schr\"odinger operator with trivial magnetic field, then the magnetic potential is zero modulo the natural gauge invariance.  
This can be seen as a spectral inverse problem with spectral datum the lowest eigenvalue. 
Recently, Ceki\'c and Lefeuvre \cite{CekLef} obtained that the spectrum of the Bochner Laplacian on closed manifolds with negative curvature and simple length spectrum determines a connection, up to gauge, under a low rank assumption. 
This is closely related to the framework of the magnetic Laplacian (cf. Section \ref{Sec:Connexion}). Let us also mention the earlier results of \cite{G4}, \cite{EskRals1} on the $2$-torus. We are not aware of other positive results on spectral inverse problems for the magnetic Schr\"odinger operator in the case of closed Riemannian manifolds. 
Spectral inverse problems in the presence of a magnetic field have been studied quite extensively but rather in the case of manifolds with boundary,  where the spectral data include information about the (Neumann) traces of the (Dirichlet) eigenfunctions in addition to the sequence of eigenvalues. 
This type of spectral inverse problems are usually referred to as Borg-Levinson theorems \cite{BCDSFKS, Se} and are also related to boundary inverse problems \cite{NSU,DSFKSU} (cf. survey \cite{SalSurvey} on the subject). 

One of our results, stated in \textbf{Theorem \ref{Th 1.1}}, concerns the identifiability, up to a natural gauge, of the magnetic and electric potentials from the spectrum of their magnetic Schr\"odinger operator on a closed Anosov manifold, provided the length spectrum is simple. 
The latter assumption is known to be necessary, as there are counterexamples of isospectral electric potentials when the length spectrum is not simple (cf. \cite{Kuw}, \cite{Br}).
In particular, our result extends a special instance of that obtained by Ceki{\'c} and Lefeuvre on the magnetic Laplacian in the aforementioned \cite{CekLef}.

Since the initial motivation was the study of the Steklov spectrum from the point of view of the second author \cite{Flo}, our most important results concern the magnetic Steklov inverse problem and give a similar boundary determination statement, namely the recovery of the jet at the boundary of an electric potential and a magnetic field from the spectrum of its magnetic Dirichlet-to-Neumann map (see Section \ref{Section 5}), provided that the boundary is Anosov with simple length spectrum. As a simple corollary, an analytic magnetic field is entirely determined by its Steklov spectrum.
To our knowledge, these are the first positive results for the magnetic Steklov inverse problem in a general framework. Unlike the case of the magnetic Laplacian mentioned above, the literature on the Steklov spectrum in the presence of a magnetic field seems rather limited at the moment ; let us mention the very recent \cite{CGHP} --- which is the analogue of Shigekawa's result \cite{Shi} for the magnetic Steklov lowest eigenvalue ---, \cite{ProSav} on the direct problem, \cite{LiuTan}, \cite{HelfNic} on the Steklov heat trace asymptotics, and \cite{CekSif} providing important spectral invariants in dimension $2$. An important observation from our work is that, in contrast to the magnetic Schr\"odinger operator which is just a differential operator of order $2$, the magnetic DN map is a classical pseudodifferential operator of order $1$ whose full symbol, considered in suitable local coordinates, contains much more information about the potentials. More precisely, in boundary normal coordinates, each of its homogeneous terms of order $-j\leq -1$ involves the normal derivative at the boundary of both magnetic and electric potentials, respectively of order $j$ and $j-1$ (see \textbf{Lemma \ref{Lemma 5.6}}).
Consequently, using the approach developed in \cite[Section IV]{Flo}, we are able to recover, modulo gauge, the full jet at the boundary of these potentials by induction on the order $j$. However, in this paper, the procedure is much more delicate because of the gauge on the magnetic potential that must be managed at each step (cf. Section \ref{Section 5.3}). 

Note that the only source of non-uniqueness in these magnetic spectral inverse problems arises from the natural gauge invariance on magnetic potentials. In particular, on manifolds with non-trivial topology, gauge-equivalent magnetic potentials may produce the same magnetic field while remaining spectrally indistinguishable. This phenomenon is closely related to the \emph{Aharonov–Bohm effect} (see \cite{AhBo}, \cite{Esk}), which represents the fact that a zero magnetic field does not necessarily imply the invisibility of the magnetic potential. Indeed, the spectrum only detects the magnetic fluxes modulo $2\pi$ (cf. \textbf{Corollary \ref{Cor 2.2}} and \textbf{Lemma \ref{Lemma 4.2}}). On this subject, the reader may also consult \cite[Proposition 1.1]{Helf} and the discussion therein.

\subsection{Notations}\label{Sect 1.2}

Throughout the article, we will denote closed Riemannian manifolds by $(M,g)$ and compact Riemannian manifolds with boundary by $(N,h)$. For the latter, we will denote $h^{\circ}$ the induced metric on the boundary. 
\\In addition, we will adopt the standard notations $$Hu:=\mathcal{L}_{H}u=\restriction{\frac{d}{dt}}{t=0}\phi_{t}^{*}u$$ for the Lie derivative of the function $u$ along the vector field $H$ generated by $\phi_{t}$. Unless otherwise stated, $\phi_{t}$ will always be the (co)geodesic flow on the unit cotangent bundle $S^{*}M$ and $H$ its generator.
\\Finally, denote by $\Psi^{k}(M)$ (resp. $\Psi_{cl}^{k}(M)$) the set of pseudodifferential (resp. classical pseudodifferential) operators of order $k$ on the closed manifold $M$. Recall that a pseudodifferential operator $P\in\Psi^{m}(M)$ is said to be \textit{classical} if its full symbol $\sigma_{P}^{\rm full}$ considered in any coordinate chart is \textit{polyhomogeneous} i.e admits an asymptotic expansion $$\sigma_{P}^{\rm full}\left(x,\xi \right)\sim \sum_{j=0}^{+\infty}p_{m-j}(x,\xi)\hspace{0.1cm},$$ \medskip\noindent with $p_{m-j}$ homogeneous of degree $m-j$ in $\xi$ (see \cite[Sect.18.1]{Horm1}).
\medskip\noindent\\We will denote by $\sigma_{P}$ and $\sub(P)$ repectively the principal and subprincipal symbol of $P$, given in any local coordinates by $p_{m}$ and

$$p_{m-1}-\frac{1}{2i}\sum_{j=1}^{n}\frac{\partial^{2}p_{m}}{\partial x_{j}\partial\xi_{j}}\hspace{0.1cm}\cdot$$ 

\medskip\noindent It is well-known that these quantities are invariant to coordinate changes, we refer to \cite{DH} and \cite{Horm1,Horm2} for details. Since the symbols of linear operators on $M$ live in the cotangent bundle $T^{*}M$, our natural preference in this paper is to formulate the results and perform calculations in the cotangent space rather than the tangent space.

Throughout this article, unless otherwise specified, $a$, $\tilde{a}$ and $q$, $\tilde{q}$ will refer respectively to smooth magnetic potentials and electric potentials either in $N$ or in $M$, depending on the problem under consideration (see the two subsections \ref{Section 1.3} and \ref{Section 1.4} below). Finally, the following notations

$$\begin{array}{l|rcl}
\pi_{0}^{*}: & C^{\infty}(M) & \longrightarrow & C^{\infty}(S^{*}M) \\
			& q & \longmapsto & (x,\xi)\mapsto q(x) \end{array}$$
and            
            $$\begin{array}{l|rcl}
\pi_{1}^{*}: & C^{\infty}(M,S^{*}M) & \longrightarrow & C^{\infty}(S^{*}M) \\
			& a & \longmapsto & (x,\xi)\mapsto a(x)\xi^{\sharp} \end{array}\hspace{0.1cm},$$

\medskip\noindent may be used, particularly in Section \ref{Section 4} and \ref{Section 5}, to denote the natural lifts.

\subsection{The magnetic Schr\"odinger operator}\label{Section 1.3}
Let $(M,g)$ be a smooth clo-\\sed Riemannian manifold of dimension $n \geq 2$.  \\A magnetic potential is a real-valued one form $a \in \Omega^1(M;\R)$
and the corresponding magnetic field is the exterior differential $b = \dd a \in \Omega^2(M;\R)$.
If $q \in C^{\infty}(M;\R)$ is a smooth electric potential, we consider the magnetic Schr\"odinger operator 
\[ P_{a,q} = (\dd + i a )^*(\dd + i a) + q\hspace{0.1cm},\] 
where the adjonction is taken with respect to the scalar product on forms given by
\[ (\chi|\omega) = \int_M \langle \chi, \bar{\omega}\rangle \, \dd \mathrm{vol}_g = \int_M \chi \wedge *\bar{\omega}\hspace{0.1cm}.  \] 
In local coordinates, with the classical notation $D_j := \frac{1}{i} \frac{\d}{\d x_j}$ and
denoting $a=\sum_{i=1}^{n}a_{j}dx_{j}$,
the magnetic Schr\"odinger operator reads 
\begin{align*}
      P_{a,q} = \frac{1}{\sqrt{\det g}} \sum_{j,k=1}^n \big(D_j+a_j\big)\Big( \sqrt{\det g} \, g^{jk} \big(D_k+a_k\big)\Big) + q \hspace{0.1cm}.
\end{align*}
If we define the domain of $P:=P_{a,q}$ to be 
$$D(P) = \big\{u \in H^1(M) : Pu \in L^2(M)\big\}\hspace{0.1cm},$$
then $(P,D(P))$ is a self-adjoint elliptic operator, thus it has a compact resolvent and, therefore, only a pure point spectrum.
Besides, since
\[ (Pu|u) + c\|u\|^2 = \|\dd u + u a\|^2 + ((q+c) u|u)  \geq 0 \]
with $c=\min q$,  the eigenvalues are contained in the interval $[-c,\infty)$.
Assuming that $q$ is non-negative or, if necessary, translating, the spectrum consists in a sequence of eigenvalues
\[ 0\leq \lambda_{0}(P)\leq\lambda_{1}(P)\leq\lambda_{2}(P)\leq...\rightarrow \infty\hspace{0.1cm}. \]

\subsection{The magnetic DN map}\label{Section 1.4}

Let $(N,h)$ be a smooth compact Riemannian manifold of dimension $n\geq 2$ with boundary and $a\in\Omega^1(N;\R)$, $q\in C^{\infty}(N;\R)$ respectively (smooth) magnetic and electric potentials on $N$. 
The magnetic Dirichlet-to-Neumann map (DN map) $\Lambda_{a,q}$ associated with $a$ and $q$ is the DN map associated with the magnetic Schr\"odinger operator :
$$\begin{array}{l|rcl}
\Lambda_{a,q} : & H^{1/2}(\partial N) & \longrightarrow & H^{-1/2}(\partial N) \\
			& f & \longmapsto & \restriction{\left(\dd u+iau\right)}{\partial N}\left(\nu\right) =\restriction{\left(\partial_{\nu}u+i\langle a,\nu\rangle u\right)}{\partial N} \end{array}\hspace{0.1cm},$$ 
where $u$ is the unique solution in $H^{1}(N)$ of the Dirichlet problem
$$\left\lbrace
\begin{aligned}
P_{a,q}u &= 0 \\
\restriction{u}{\partial N} &= f \\
\end{aligned}\right.\hspace{0.1cm},$$
provided that $0$ is not a Dirichlet eigenvalue of $P_{a,q}\hspace{0.1cm}.$ The magnetic DN map is a classical, elliptic and formally self-adjoint pseudodifferential operator of order $1$ on the closed manifold $\partial N$ (cf. \cite{DSFKSU} or \cite{Cek}). \\Since the unbounded operator $\left(\Lambda_{a,q},H^{1}(\partial N)\right)$ is self-adjoint elliptic, the spectrum of $\Lambda:=\Lambda_{a,q}$ is discrete and is given by a sequence of eigenvalues 
$$\sigma_{1}(\Lambda)\leq\sigma_{2}(\Lambda)\leq...\rightarrow \infty\hspace{0.1cm}.$$
This sequence is called the magnetic Steklov spectrum, as it is equivalently the sequence of eigenvalues of the \textit{magnetic Steklov eigenvalue problem} : 
$$\left\lbrace
\begin{aligned}
P_{a,q}u &= 0\hspace{0.2cm}\text{in $N$}\\
\Lambda_{a,q}u &= \sigma u\hspace{0.2cm}\text{on $\partial N$} \\
\end{aligned}\right.\hspace{0.1cm}\cdot$$

\subsection{Main results}

\medskip\noindent Because of the existence of classical counterexamples \cite{V,Sun,Br} on the determination of the metric and the electric potentials from the spectrum of the associated Schr\"odinger operator, we need additional geometric conditions. More precisely, those are assumptions on the closed orbits of the geodesic flow required to exploit the wave trace formula (cf. Section \ref{SecDGTrace}).
Throughout the paper, we will consider a suitable framework where these assumptions are verified. As already mentioned, we will assume that the cogeodesic flow is Anosov with simple length spectrum.  
Since there is a natural closed gauge invariance in  magnetic spectral inverse problems (see Subsection \ref{SubSecGauge}), one can only hope to recover the magnetic potential modulo gauge, and then the magnetic field. 

The main results (Theorems \ref{Th 1.2} and \ref{Th 1.3}) concern boundary determination of the electric potential and the magnetic field from the magnetic Steklov spectrum. Provided that suitable gauge transformations have been fixed for each magnetic potentials (we will refer to these potentials as \emph{normalized}, see \textbf{Remark \ref{Rk 5.1}} and the discussion above it), we can first show :

\begin{thm}\label{Th 1.2}
     Let $(N,h)$ be a compact Riemannian manifold of dimension $n \geq 3$ with boundary $M:=\partial N$ and $(a,q)$, $(\tilde{a},\tilde{q})$ be two pairs of potentials in $N$ such that $a$ and $\tilde{a}$ are normalized. Assume that the cogeodesic flow $\phi_t$ of $M$ is Anosov with simple length spectrum.  \\If the spectra of the magnetic DN maps $\Lambda_{a,q}$ and $\Lambda_{\tilde{a},\tilde{q}}$ coincide 
     \[ \big(\sigma_k(\Lambda)\big)_{k \in \N} = \big(\sigma_k(\tilde{\Lambda})\big)_{k \in \N} \hspace{0.1cm},\] 
     
     \medskip\noindent then the electric potentials at the boundary are equal,
     $$\restriction{\tilde{q}}{M}=\restriction{q}{M}$$ and the magnetic potentials at the boundary are gauge-equivalent, namely there exists a smooth function $\vartheta \in C^{\infty}(M,\mathbb{S}^1)$ with values in the unit circle such that 
     \[ \restriction{\tilde{a}}{M} = \restriction{a}{M}-i\bar{\vartheta}\dd \vartheta\hspace{0.1cm}. \]
     
     \medskip\noindent In particular, the magnetic fields coincide at the boundary
     \[  \restriction{\dd \tilde{a}}{M} =  \restriction{\dd a}{M} \hspace{0.1cm}\cdot\]
\end{thm}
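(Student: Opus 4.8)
\emph{Proof proposal.} The plan is to transpose, to the pseudodifferential setting, the argument establishing Theorem~\ref{Th 1.1}. The structural point is that the magnetic DN map $\Lambda_{a,q}$ is a classical, elliptic, formally self-adjoint operator in $\Psi^1_{cl}(M)$ on the \emph{closed} manifold $M=\d N$, and that — the metric $h$ being fixed and the magnetic potential entering $P_{a,q}$ only at subprincipal order — its principal symbol is $|\xi|_{h^\circ}$, \emph{independently of} $(a,q)$. Hence the Hamiltonian flow of $\sigma_{\Lambda_{a,q}}$ on $S^*M$ is precisely the cogeodesic flow $\phi_t$ of $(M,h^\circ)$, which by hypothesis is Anosov with simple length spectrum; in particular all its closed orbits are isolated and nondegenerate, so the Duistermaat--Guillemin trace formula of Section~\ref{SecDGTrace} applies to the group $\e^{it\Lambda_{a,q}}$.

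Isospectrality of $\Lambda_{a,q}$ and $\Lambda_{\tilde a,\tilde q}$ gives the equality of distributions $\trace \e^{it\Lambda_{a,q}}=\trace \e^{it\Lambda_{\tilde a,\tilde q}}$ on $\R$. Their common singular support is the length spectrum of $(M,h^\circ)$, and at each $t=\ell_\gamma$ the Maslov contribution and the determinant of the linearized Poincaré map depend only on $h^\circ$; comparing the leading singularities yields
\[
\oint_\gamma\big(\sub(\Lambda_{\tilde a,\tilde q})-\sub(\Lambda_{a,q})\big)\in 2\pi\Z
\]
for every closed geodesic $\gamma$ of $(M,h^\circ)$. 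By the symbol computation carried out in boundary normal coordinates in Lemma~\ref{Lemma 5.6} (case $j=1$), $\sub(\Lambda_{a,q})$ equals $\pi_1^*(\restriction{a}{M})$ up to a term depending only on $h$, so the integrand above is, up to a harmless constant, $\pi_1^*\big(\restriction{(\tilde a-a)}{M}\big)$, whose integral over $\gamma$ is the period $\oint_\gamma\restriction{(\tilde a-a)}{M}$. Feeding the relation $\oint_\gamma\restriction{(\tilde a-a)}{M}\in 2\pi\Z$ into the gauge/Aharonov--Bohm mechanism already used for the magnetic Laplacian (cf. Corollary~\ref{Cor 2.2} and Lemma~\ref{Lemma 4.2}) — which combines the solenoidal injectivity of the closed-geodesic $X$-ray transform on $1$-forms over an Anosov manifold with the fact that the wave trace only detects fluxes modulo $2\pi$ — one obtains $\vartheta\in C^\infty(M,\mathbb{S}^1)$ with $\restriction{\tilde a}{M}=\restriction{a}{M}-i\bar\vartheta\,\dd\vartheta$, hence $\restriction{\dd\tilde a}{M}=\restriction{\dd a}{M}$ since $\dd(\bar\vartheta\,\dd\vartheta)=0$.

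To recover $\restriction{q}{M}$, I would replace $\Lambda_{\tilde a,\tilde q}$ by the conjugate $A:=\vartheta\,\Lambda_{\tilde a,\tilde q}\,\bar\vartheta$ (or $\bar\vartheta\,\Lambda_{\tilde a,\tilde q}\,\vartheta$, depending on sign conventions), a pseudodifferential operator on $M$ unitarily equivalent to $\Lambda_{\tilde a,\tilde q}$, hence isospectral to $\Lambda_{a,q}$, and, by the construction of $\vartheta$, sharing with $\Lambda_{a,q}$ both its principal and its subprincipal symbols. One then turns to the \emph{next} Duistermaat--Guillemin invariant, attached to the homogeneous term of order $-1$: since the principal and subprincipal data of $A$ and $\Lambda_{a,q}$ already agree, its new content is the closed-geodesic integral of the difference of the order $-1$ symbols. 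By Lemma~\ref{Lemma 5.6} this symbol involves $\restriction{q}{M}$ (through a term $\pi_0^*(\restriction{q}{M})/(2|\xi|)$) together with $\restriction{\d_\nu a}{M}$ and metric data; after the normalization fixed for $a,\tilde a$ and a further adjustment of gauge ensuring that the normal jet of the magnetic potential does not contaminate this identity, the $\d_\nu a$ contributions drop out (being exact along the flow, or cancelling by the normalization), leaving $\oint_\gamma \pi_0^*\big(\restriction{(\tilde q-q)}{M}\big)=0$ for all closed geodesics $\gamma$. Injectivity of the geodesic $X$-ray transform on functions over the Anosov manifold $M$ then forces $\restriction{\tilde q}{M}=\restriction{q}{M}$.

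The genuinely delicate point — which I expect to absorb most of the work — is this last step: once a gauge has been used to align the tangential magnetic potentials, matching the order $-1$ symbols requires tracking how conjugation by $\vartheta$ and the chosen normalization act on the lower-order symbol, and checking that the residual discrepancy in $\restriction{\d_\nu a}{M}$ contributes nothing to the closed-geodesic integral, so that the electric potential can be isolated. This is exactly the difficulty flagged in the introduction (``much more delicate because of the gauge on the magnetic potential that must be managed at each step''), and it also explains why the conclusion on $a$ can only be asserted up to gauge: the $2\pi$-indeterminacy intrinsic to the wave trace is precisely the Aharonov--Bohm ambiguity.
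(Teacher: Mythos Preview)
Your overall architecture matches the paper's: use the Duistermaat--Guillemin invariants of $\Lambda_{a,q}$ at the subprincipal level to pin down $\restriction{a}{M}$ up to gauge (via Proposition~\ref{Th 3.1}, Theorem~\ref{RigidityTransportEq} and Lemma~\ref{Lemma 4.2}), then conjugate and descend to the order $-1$ symbol to recover $\restriction{q}{M}$. Two points in your electric-potential step, however, diverge from how the paper actually proceeds.

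First, you expect the $\restriction{\partial_\nu a}{M}$ contribution to ``drop out'' after a further gauge adjustment or normalization, so that only $\restriction{q}{M}$ survives in the order~$-1$ identity. That is not what happens, and no such cancellation is needed. In the paper (Lemma~\ref{Lemma 5.7}) one sets $Q_1:=\bar\vartheta_0\,\Lambda_{a,q}\,\vartheta_0-\Lambda_{\tilde a,\tilde q}\in\Psi^{-1}(M)$, obtains $\int_\gamma\sigma_{Q_1}=0$ from Corollary~\ref{Cor 2.4}, and observes that on $S^*M$ one has $\sigma_{Q_1}=\pi_0^*f_0+\pi_1^*f_1$ with $f_0$ built from $\restriction{(q-\tilde q)}{M}$ and $f_1$ from $\restriction{\partial_n(a-\tilde a)}{M}$. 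The injectivity of the geodesic $X$-ray transform on sums of a function and a $1$-form on an Anosov manifold then gives \emph{simultaneously} $f_0=0$ and $f_1$ exact; the first yields $\restriction{q}{M}=\restriction{\tilde q}{M}$, the second is a bonus (it launches the induction of Theorem~\ref{Th 5.9}). So you do not have to isolate the electric contribution beforehand.

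Second, and this is the genuine technical wrinkle you underestimate: the conjugate $\bar\vartheta_0\,\Lambda_{a,q}\,\vartheta_0$ is \emph{not} a magnetic DN map in general, because $\vartheta_0\in C^\infty(M,\mathbb{S}^1)$ need not extend to an $\mathbb{S}^1$-valued function on $N$ (Remark~\ref{Rem 5.3}). Lemma~\ref{Lemma 5.6}, which you invoke to read off the order~$-1$ symbol, is stated for magnetic DN maps and therefore does not apply globally to the conjugate. The paper circumvents this by working locally: on a finite cover $(U_k)$ of $M$ with $\restriction{\vartheta_0}{U_k}=\e^{i\psi_k}$, each $\psi_k$ is extended to $\tilde\psi_k\in C^\infty(N)$ chosen flat in the normal direction, so that on $\overline W_k$ the conjugate coincides with the genuine DN map $\Lambda_{a+\dd\tilde\psi_k,\,q}$; Lemma~\ref{Lemma 5.6} is then applied chart by chart and the pieces reassembled. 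This local-lifting step is where ``most of the work'' actually goes, rather than in forcing the normal-derivative terms to cancel.

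A small bookkeeping remark: the subprincipal identification you attribute to ``Lemma~\ref{Lemma 5.6}, case $j=1$'' is in fact Lemma~\ref{Lemma 5.4}; Lemma~\ref{Lemma 5.6} covers the orders $-j\le -1$.
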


\medskip\noindent As mentioned at the end of the introduction, it is actually possible to recover the full jet at the boundary of the magnetic field and of the electric potential from the Steklov spectrum, thereby obtaining identifiability in the real-analytic setting, this is the object of the next result. In the following, we will use the notation $\tilde{f}\simeq f$ to denote that $\tilde{f}$ and $f$ have the same Taylor series at a point $p$ in some local coordinates. Note that in \textbf{Theorem \ref{Th 1.3}} below, we do not need to assume that magnetic potentials are normalized contrary to \textbf{Theorem \ref{Th 1.2}} (cf. \textbf{Remark \ref{Rk 5.1}}).

\begin{thm}\label{Th 1.3}
 Let $(N,h)$ be a compact Riemannian manifold of dimension $n \geq 3$ with boundary $M:=\partial N$  and $(a,q)$, $(\tilde{a},\tilde{q})$ be two pairs of potentials in $N$. Assume that the cogeodesic flow $\phi_t$ of $M$ is Anosov with simple length spectrum.  \\If the spectra of the magnetic Steklov operators $\Lambda_{a,q}$ and $\Lambda_{\tilde{a},\tilde{q}}$ coincide 
 \[ \big(\sigma_k(\Lambda)\big)_{k \in \N} = \big(\sigma_k(\tilde{\Lambda})\big)_{k \in \N}\hspace{0.1cm}, \]   

\medskip\noindent then in boundary normal coordinates at any point $p\in M$,
$$\dd\tilde{a}\simeq \dd a\hspace{0.3cm}\text{and}\hspace{0.3cm}\tilde{q}\simeq q\hspace{0.1cm}.$$

\medskip\noindent In particular, if $a$, $\tilde{a}$ and $q$, $\tilde{q}$ are real-analytic and $N$ is connected, then $$\dd\tilde{a}=\dd a\hspace{0.3cm}\text{and}\hspace{0.3cm}\tilde{q}=q\hspace{0.1cm}.$$

\end{thm}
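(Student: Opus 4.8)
The plan is to reduce the statement to a boundary-determination result obtained by analyzing the full symbol of the magnetic Dirichlet-to-Neumann map in boundary normal coordinates, and then to run an induction on the order $j$ of normal derivatives. First I would recall that, by the Duistermaat--Guillemin trace formula together with the Anosov and simple length spectrum hypotheses on $M=\partial N$, knowledge of the magnetic Steklov spectrum determines the principal and subprincipal symbols of $\Lambda_{a,q}$, and more generally (via the wave trace invariants attached to the symbol expansion of the elliptic, classical, self-adjoint operator $\Lambda_{a,q}\in\Psi^1_{cl}(M)$) all the lower-order homogeneous terms $p_{-j}$ of the full symbol, modulo the gauge ambiguity; this is exactly the mechanism used to prove Theorems \ref{Th 1.1} and \ref{Th 1.2}, and I would invoke it here as the input. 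The key point, which is Lemma \ref{Lemma 5.6}, is that in boundary normal coordinates at $p\in M$ the homogeneous term of order $-j\le -1$ in the symbol of $\Lambda_{a,q}$ involves $\partial_\nu^{\,j}$ of the magnetic potential and $\partial_\nu^{\,j-1}$ of the electric potential, together with lower-order normal derivatives already recovered at previous steps.

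The main body of the argument is then an induction. At step $j=0$ (base case) one recovers $\restriction{q}{M}$ and $\restriction{a}{M}$ up to gauge $\vartheta\in C^\infty(M,\mathbb S^1)$, hence $\restriction{\dd a}{M}$; this is precisely Theorem \ref{Th 1.2}, modulo the remark that here we do not assume the potentials normalized, so a preliminary gauge fixing (as in Remark \ref{Rk 5.1}) is performed. For the inductive step, assuming that the Taylor series up to order $j-1$ of $\dd a$ and the Taylor series up to order $j-2$ of $q$ have been matched in boundary normal coordinates, one reads off from the order $-j-1$ (or appropriately indexed) homogeneous symbol term: first the normal derivative $\partial_\nu^{\,j}$ of the \emph{tangential} part of the magnetic potential, which, combined with the lower-order data already known and the structure of $\dd a$ in these coordinates, yields $\partial_\nu^{\,j}(\dd a)$ at $p$; then, from the next term, the normal derivative $\partial_\nu^{\,j-1}q$ at $p$. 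The delicate bookkeeping — and this is where I expect the real work to be — is that the gauge function $\vartheta$ and its normal derivatives contaminate the recovered tangential components of $a$ at every order, so one must track at each step that the contamination affects only $\dd(\text{gauge})$-type terms and therefore disappears from the magnetic \emph{field} $\dd a$; concretely one shows $\restriction{\partial_\nu^{\,j}\tilde a}{M}-\restriction{\partial_\nu^{\,j}a}{M}$ is the normal-$j$-jet of a closed one-form of the form $-i\bar\vartheta_j\,\dd\vartheta_j$, so that $\restriction{\partial_\nu^{\,j}\dd\tilde a}{M}=\restriction{\partial_\nu^{\,j}\dd a}{M}$. This is the analogue, in the magnetic setting, of the inductive procedure of \cite[Section IV]{Flo}, but genuinely more involved because of the gauge management at each order, as already emphasized in the introduction and carried out in Section \ref{Section 5.3}.

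The hard part, concretely, is twofold: proving Lemma \ref{Lemma 5.6}, i.e. extracting the precise dependence of the $j$-th symbol term on $(\partial_\nu^{\,j}a,\partial_\nu^{\,j-1}q)$ via the factorization of the magnetic Schr\"odinger operator near the boundary (Riccati-type equation for the DN map), and controlling the gauge propagation in the induction so that no ambiguity leaks into $\dd a$. Once the Taylor-series statement $\dd\tilde a\simeq\dd a$ and $\tilde q\simeq q$ in boundary normal coordinates at every $p\in M$ is established, the final assertion follows immediately: if $a,\tilde a,q,\tilde q$ are real-analytic and $N$ is connected, then two real-analytic functions (respectively forms) with the same Taylor series at a point of a connected real-analytic manifold agree globally, hence $\dd\tilde a=\dd a$ and $\tilde q=q$ on all of $N$. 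This last step is routine and completes the proof. \qed
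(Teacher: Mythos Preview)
Your high-level plan---induction on the order of normal derivatives, using Lemma \ref{Lemma 5.6} to identify which data appears at each order, and tracking the gauge---matches the paper's strategy. However, there is a real gap in how you propose to extract information from the spectrum at each step.

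You write that ``knowledge of the magnetic Steklov spectrum determines\ldots all the lower-order homogeneous terms $p_{-j}$ of the full symbol, modulo the gauge ambiguity'' and that this is ``exactly the mechanism used to prove Theorems \ref{Th 1.1} and \ref{Th 1.2}.'' This is where the argument breaks. The Duistermaat--Guillemin formula (Corollary \ref{Cor 2.2}) only yields $\int_\gamma \sub(P)\bmod 2\pi\Z$; it gives no direct access to the lower-order terms $p_{-j}$, and the spectrum does not determine the full symbol. The paper's actual mechanism is different: at step $j$ one \emph{conjugates} $\Lambda_{a,q}$ by the unitary gauge factors accumulated so far, namely $\vartheta_0$ and $e^{i\alpha_j}$ with $\alpha_j=\sum_{\ell=1}^j\chi\,\beta_\ell\, x_n^\ell/\ell!$, so that the difference
\[
Q_{j+1}:=e^{-i\alpha_j}\bar\vartheta_0\,\Lambda_{a,q}\,\vartheta_0\,e^{i\alpha_j}-\Lambda_{\tilde a,\tilde q}
\]
lands in $\Psi^{-j-1}(M)$ (Lemma \ref{Lemma 5.8}). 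Only then does one invoke the \emph{adapted} trace formula of Theorem \ref{Th 2.3}/Corollary \ref{Cor 2.4}---not the original Duistermaat--Guillemin one---to conclude $\int_\gamma\sigma_{Q_{j+1}}=0$ for every closed geodesic, and finally uses injectivity of the geodesic X-ray transform on functions and one-forms to obtain $\partial_n^j(q-\tilde q)|_M=0$ and exactness of $\partial_n^{j+1}(\tilde a-a)|_M$. Without the conjugation you cannot force the difference to have negative enough order, and without Theorem \ref{Th 2.3} you have no spectral invariant at that order to work with; your ``one reads off from the order $-j-1$ homogeneous symbol term'' is precisely the step that does not exist as stated.

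A second technical point you omit: the gauge $\vartheta_0\in C^\infty(M,\mathbb S^1)$ from the base case need not extend to a map $N\to\mathbb S^1$ (topological obstruction), so $\bar\vartheta_0\Lambda_{a,q}\vartheta_0$ is not a priori a magnetic DN map and Lemma \ref{Lemma 5.6} cannot be applied to it directly. The paper handles this by passing to a finite cover $(U_k)$ of $M$ on which $\vartheta_0=e^{i\psi_k}$, extending each $\psi_k$ to $N$ with all normal derivatives vanishing on $M$, and checking the symbol identities locally on each $S^*\overline W_k$. By contrast, the higher-order gauges are unproblematic because the one-forms recovered at steps $\ell\ge 1$ are \emph{exact} ($=\dd\beta_\ell$), not merely closed, so $e^{i\alpha_j}$ is a genuine gauge on $N$. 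Your sketch conflates these two situations.
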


\medskip\noindent This result is established in Section \ref{Section 5.3} by \textbf{Theorem \ref{Th 5.9}} and generalizes \cite[Theorem I.4]{Flo}, which concerns the determination of the jet at the boundary of an electric potential from the Steklov spectrum. Regarding the magnetic potentials, let us point out that we are unable to recover a global gauge equivalence on the whole manifold $N$. This has to do with the difficulty of extending an analytic function to the whole manifold because of the non-trivial topology. 
A key step in the proof of all these main results is the conversion of the spectral inverse problem into a tomography inverse problem. In the case of \textbf{Theorem \ref{Th 1.2}} and \textbf{Theorem \ref{Th 1.1}}, the corresponding tomography inverse problem can be solved using a standard argument of Paternain \cite{Pat} in the context of transparent connections, already used in \cite{Flo}. In this work, however, we adopt a different approach and present an alternative proof based on a suitable modification of a classical $L^{2}$-energy identity. We refer the reader to Section \ref{Section 3} for more details. 

Using the same approach, one can easily prove the following theorem --- which extends to Anosov manifolds a particular instance of \cite[Theorem 1.1]{CekLef} on recovering a magnetic potential from knowledge of the spectrum of the magnetic Laplacian. 

\begin{thm}\label{Th 1.1}
     Let $(M,g)$ be a closed Riemannian manifold of dimension $n \geq 2$ and $(a,q)$, $(\tilde{a},\tilde{q})$ be two pairs of potentials in $M$. Assume that the cogeodesic flow $\phi_t$ is Anosov with simple length spectrum.  \\If the spectra of the magnetic Schr\"odinger operators $P_{a,q}$ and $P_{\tilde{a},\tilde{q}}$ coincide 
     \[ \big(\lambda_k(P)\big)_{k \in \N} = \big(\lambda_k(\tilde{P})\big)_{k \in \N} \hspace{0.1cm},\] 
     
     \medskip\noindent then the electric potentials are equal,
     $$\tilde{q}=q$$ and the magnetic potentials are gauge-equivalent, namely there is a smooth function $\vartheta \in C^{\infty}(M,\mathbb{S}^1)$ with values in the unit circle such that 
     \[ \tilde{a} = a -i\bar{\vartheta} \, \dd \vartheta \hspace{0.1cm}.\]
     In particular, the magnetic fields coincide  
     \[  \dd \tilde{a} =  \dd a \hspace{0.1cm}.\]    
\end{thm}
\medskip\noindent Note that this result also extends \cite[Theorem 3(a)]{Gui} to Schr\"odinger operators with non-zero magnetic potential.

\section{Wave trace formulas}
\label{SecDGTrace}

This section introduces the spectral invariants on which this paper is based, provided by the Duistermaat-Guillemin trace formula \cite[Theorem 4.5]{DG}. The latter is related to the trace of the wave group
$$U_{0}(t):=\e^{itP}$$
associated with a self-adjoint elliptic pseudodifferential operator $P\in\Psi_{1}(M)$ of order $1$ on the closed Riemannian manifold $(M,g)$. In this section, we will call \textit{bicharacteristic} of $P$ any integral curve of the Hamiltonian vector field associated with the principal symbol of $P$. Throughout the paper, bicharacteristics will be closed orbits of the geodesic flow of $M$ since $P$ will denote either the square root of a magnetic Schr\"odinger operator $P_{a,q}$ on $M$ or a magnetic DN map $\Lambda_{a,q}$ on a manifold $(N,h)$ with boundary $M:=\partial N$.

\begin{thm}[\cite{DG}]\label{Th 2.1}
    Suppose that $P$ only admits a finite number of periodic bicharacteristics $\gamma_{1},...,\gamma_{r}$ of period $T$ and that each of these curves is non-degenerate. Then
   $$\trace \left(U_{0}(t)\right)\underset{t\rightarrow T}{\sim} \hspace{0.1cm}\frac{1}{t-T}\sum_{j=1}^{r}\frac{\lvert T_{j}^{\sharp}\rvert \e^{im_{j}\frac{\pi}{2}} \e^{iT\hspace{0.05cm}\overline{\gamma_{j}}}}{\lvert \det\left(Id-\mathcal{P}_{j}\right)\rvert^{1/2}}\hspace{0.1cm},$$ where $$\overline{\gamma_{j}}:=\frac{1}{T}\int_{\gamma_{j}}\sub(P)\hspace{0.1cm},$$ and $\mathcal{P}_{j}$, $T_{j}^{\sharp}$, $m_{j}$ are respectively the linearized Poincaré map, the primitive period and the Maslov index of $\gamma_{j}$.
\end{thm}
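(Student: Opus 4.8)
The statement is the Duistermaat--Guillemin trace formula, and the plan is to follow the proof of \cite{DG}; I outline its structure. The starting point is the microlocal description of the wave group. Since $P\in\Psi_{1}(M)$ is elliptic and self-adjoint, after arranging $\sigma_{P}>0$ on $T^{*}M\setminus 0$ one solves the evolution equation $\tfrac{1}{i}\d_{t}U_{0}=PU_{0}$, $U_{0}(0)=\mathrm{Id}$, by a geometric-optics construction, which shows that $U_{0}(t)=\e^{itP}$ is a Fourier integral operator of order $0$ whose canonical relation is the graph of the Hamiltonian flow $\phi_{t}$ of $\sigma_{P}$ --- in our applications, the (co)geodesic flow. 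Its principal symbol is a half-density on this graph, transported by $\phi_{t}$; the transport equation it satisfies is inhomogeneous, with inhomogeneity exactly $i\,\sub(P)$, so that along a bicharacteristic the symbol picks up the extra phase $\exp\big(i\int_{0}^{t}(\sub P)\circ\phi_{s}\,\dd s\big)$.

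Next I would localize near the period $T$: pick $\chi\in C^{\infty}_{0}(\R)$ equal to $1$ near $T$ and supported so narrowly that $T$ is the only period of a closed bicharacteristic in $\mathrm{supp}\,\chi$ (possible by the hypothesis that these orbits are finite in number, hence isolated). By the Poisson relation --- i.e. the fact that $\mathrm{WF}(\trace U_{0})$ is contained in the period set together with $\{t=0\}$ --- it suffices to extract the leading singularity at $T$ of $\chi(t)\,\trace U_{0}(t)$. Writing the Schwartz kernel of $U_{0}(t)$ in a microlocal partition of unity as an oscillatory integral $\int\e^{i\varphi(t,x,y,\xi)}b(t,x,y,\xi)\,\dd\xi$, the trace becomes $\trace U_{0}(t)=\iint\e^{i\varphi(t,x,x,\xi)}b(t,x,x,\xi)\,\dd\xi\,\dd x$ (summed over the patches), and the stationary set of $(x,\xi)\mapsto\varphi(t,x,x,\xi)$ is exactly the set of points of $S^{*}M$ fixed by $\phi_{t}$; at $t=T$ this is the disjoint union of the orbits $\gamma_{1},\dots,\gamma_{r}$, each a smooth closed curve.

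The computation is then a stationary phase argument with a clean critical manifold. The assumption that each $\gamma_{j}$ is non-degenerate --- i.e. $1$ is not an eigenvalue of the linearized Poincaré map $\mathcal{P}_{j}$ on a symplectic transversal to $\gamma_{j}$ --- is precisely the condition that $\gamma_{j}$ be a clean critical manifold with non-degenerate transverse Hessian, and that Hessian is symplectically equivalent to $\mathrm{Id}-\mathcal{P}_{j}$; stationary phase in the transverse variables thus yields the factor $|\det(\mathrm{Id}-\mathcal{P}_{j})|^{-1/2}$ together with a signature phase $\e^{im_{j}\pi/2}$, the Maslov index $m_{j}$ packaging the Keller--Maslov index of the bicharacteristic with the signature defects of the local phases. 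Integrating the remaining density along the one-dimensional $\gamma_{j}$ contributes the primitive period $|T_{j}^{\sharp}|$ (if $\gamma_{j}$ is the $k$-th iterate of a primitive orbit then $T=kT_{j}^{\sharp}$ and the arclength factor is $T_{j}^{\sharp}$), the transported amplitude evaluated on $\gamma_{j}$ contributes the phase $\e^{iT\overline{\gamma_{j}}}$ with $\overline{\gamma_{j}}=\tfrac{1}{T}\int_{\gamma_{j}}\sub(P)$ by the transport equation above, and the leftover integration in $t$ (equivalently, the order count of $\trace U_{0}$ as a Lagrangian distribution conormal to $\{t=T\}$ on $\R$) produces the factor $(t-T)^{-1}$ --- properly the boundary value $(t-T-i0)^{-1}$ --- carrying the coefficient just assembled. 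Summing over $j$ gives the claim.

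The main obstacle is the careful bookkeeping: first, confirming that the transverse Hessian of $\varphi|_{x=y}$ along $\gamma_{j}$ coincides, up to the standard symplectic normalization, with $\mathrm{Id}-\mathcal{P}_{j}$, so that non-degeneracy of $\gamma_{j}$ is exactly the applicability condition for the clean stationary phase lemma; and second, tracking the several half-integer phase contributions --- from the FIO parametrization, from the transverse stationary phase, and from the Maslov line bundle along the closed orbit --- so that they combine into the single index $m_{j}$. The remaining steps are routine FIO calculus.
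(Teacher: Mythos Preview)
The paper does not prove this statement: Theorem~\ref{Th 2.1} is quoted from \cite{DG} (specifically \cite[Theorem~4.5]{DG}) and used as a black box, with a further reference to \cite[Section~II.2]{Flo} for details in the Riemannian case. Your outline is a faithful sketch of the Duistermaat--Guillemin argument itself --- FIO parametrix for $U_{0}(t)$, transport equation for the principal symbol picking up $\exp\big(i\!\int\sub(P)\big)$, clean stationary phase along each non-degenerate closed orbit with transverse Hessian $\mathrm{Id}-\mathcal{P}_{j}$ --- so there is nothing to compare: you have supplied precisely the proof the paper declines to reproduce.
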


\medskip\noindent We refer to the original article \cite{DG} and to \cite[Section II.2]{Flo} for details and definitions in the Riemannian case. The important fact is that it leads to exploitable spectral invariants under suitable dynamical assumption on the geodesic flow :  

\begin{clry}\label{Cor 2.2}
In the class of metrics $g$ such that $g$ is Anosov with simple length spectrum on $M$, the spectrum of $P$ determines for any period $T$ of periodic geodesics $\gamma$ of $M$, the quantity 
$$c_{T,-1}=\frac{\lvert T^{\sharp}\rvert \exp\left(i\hspace{0.05cm}\int_{\gamma} \sub(P)\right)}{\lvert \det\left(Id-\mathcal{P}_{\gamma}\right)\rvert^{1/2}}\hspace{0.1cm}\cdot$$
In particular, if $P$ and $\widetilde{P}$ are either magnetic Schr\"odinger operators on $M$ or magnetic DN maps on $(N,h)$ with boundary $M:=\partial N$, and have the same spectrum, then under the above assumptions on the metric $g$, 
$$\int_{\gamma} \left(\sub(P)-\sub(\widetilde{P})\right)\in 2\pi\Z\hspace{0.1cm},$$ 
for any closed orbit $\gamma$ of the geodesic flow of $M$.
\end{clry}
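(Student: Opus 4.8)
The plan is to extract the invariant $c_{T,-1}$ directly from the principal singularity of the wave trace, via the Duistermaat--Guillemin formula of Theorem~\ref{Th 2.1}.

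I would first observe that, in both situations covered by the statement, $P$ is a self-adjoint elliptic operator in $\Psi_{cl}^{1}(M)$ whose principal symbol is the Riemannian norm $|\xi|_{g}$ (respectively $|\xi|_{h^{\circ}}$ on $M=\partial N$), so that the Hamiltonian flow of $\sigma_{P}$ is exactly the cogeodesic flow of the fixed metric. For the magnetic DN map this is the recalled symbolic description of $\Lambda_{a,q}$; for the magnetic Schr\"odinger operator one passes to its square root (after adding a constant to $q$ to make it positive), whose eigenvalues are the $\lambda_{k}(P_{a,q})^{1/2}$ and whose subprincipal symbol is altered only by an additive constant, harmless for the statement. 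In either case the given spectral data determine the eigenvalues $\mu_{j}$ of $P$, hence the wave trace $\trace\,U_{0}(t)=\sum_{j}\e^{it\mu_{j}}\in\mathcal{D}'(\R)$.

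Next I would apply Theorem~\ref{Th 2.1}. The Anosov property forces every closed orbit of the flow to be hyperbolic, so $\det(\mathrm{Id}-\mathcal{P}_{\gamma})\neq 0$ for every periodic geodesic $\gamma$; combined with the simple length spectrum assumption, this guarantees that for each period $T$ there are only finitely many periodic bicharacteristics of period $T$ — essentially a single closed geodesic, up to time-reversal and iteration — and that they are all non-degenerate. Theorem~\ref{Th 2.1} then gives $\trace\,U_{0}(t)\sim(t-T)^{-1}c_{T}$ as $t\to T$, where $c_{T}$ is the finite sum over those orbits of the terms $|T_{\gamma}^{\sharp}|\,\e^{im_{\gamma}\pi/2}\,\e^{i\int_{\gamma}\sub(P)}/|\det(\mathrm{Id}-\mathcal{P}_{\gamma})|^{1/2}$. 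Since $|T_{\gamma}^{\sharp}|$, $m_{\gamma}$ and $\mathcal{P}_{\gamma}$ depend only on the cogeodesic dynamics, i.e. only on the fixed metric, and since the coefficient of the leading $(t-T)^{-1}$ singularity of $\trace\,U_{0}(t)$ is determined by the distribution itself, the spectrum of $P$ determines $c_{T}$, hence $c_{T,-1}$, for every such $T$ within the stated class of metrics (the Maslov phase $\e^{im_{\gamma}\pi/2}$ being a fixed metric-determined unit factor).

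If now $P$ and $\widetilde{P}$ are two such operators — built over the same manifold and for the same metric, so that the geodesic flow, the periods, the Maslov indices and the linearized Poincar\'e maps coincide — and if they are isospectral, then $c_{T}(P)=c_{T}(\widetilde{P})$ for every period $T$; the common factors cancel, forcing $\e^{i\int_{\gamma}\sub(P)}=\e^{i\int_{\gamma}\sub(\widetilde{P})}$, that is $\int_{\gamma}(\sub(P)-\sub(\widetilde{P}))\in 2\pi\Z$, for every closed orbit $\gamma$. The step I expect to be the main obstacle is precisely this last disentangling: a closed geodesic and its time-reversal have the same period and both feed into $c_{T}$, so one cannot read off an individual phase from a single period. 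I would resolve this by decomposing $\sub(P)-\sub(\widetilde{P})$ into its parts even and odd under $\xi\mapsto-\xi$ (roughly, the electric/curvature and the magnetic contributions) and by using the full tower of periods $kT$, $k\geq 1$, coming from the iterates $\gamma^{k}$: the identity $c_{kT}(P)=c_{kT}(\widetilde{P})$ for all $k$ expresses equality of the $k$-th power sums of two pairs of unit complex numbers, which by Newton's identities forces the two pairs to agree as multisets, and this is exactly the information needed to conclude modulo $2\pi\Z$ (and, incidentally, to see why only fluxes modulo $2\pi$ are detected). Everything else reduces to Theorem~\ref{Th 2.1} together with the hyperbolicity of closed orbits of an Anosov flow.
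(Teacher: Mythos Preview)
The paper does not give a proof of this corollary---it is presented as a direct consequence of Theorem~\ref{Th 2.1}---so your argument is already more detailed than what the paper provides. Your extraction of the leading wave-trace coefficient, and the observation that the primitive period, Maslov index, and linearized Poincar\'e map depend only on the fixed metric (hence cancel when comparing $P$ and $\widetilde P$), are exactly the intended mechanism.

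The one place your outline does not fully close is the final disentangling. Newton's identities over the tower of iterates yield only the multiset equality
\[
\bigl\{e^{i\int_{\gamma}\sub P},\ e^{i\int_{\gamma^{-1}}\sub P}\bigr\}
=\bigl\{e^{i\int_{\gamma}\sub\widetilde P},\ e^{i\int_{\gamma^{-1}}\sub\widetilde P}\bigr\},
\]
and in the ``swapped'' alternative this gives $\int_{\gamma}\sub P\equiv\int_{\gamma^{-1}}\sub\widetilde P\pmod{2\pi}$ rather than $\int_{\gamma}(\sub P-\sub\widetilde P)\in 2\pi\Z$. In both settings considered here $\sub P-\sub\widetilde P$ is odd under $\xi\mapsto-\xi$ (cf.\ the computation in Section~\ref{Section 4} and Lemma~\ref{Lemma 5.4}), so the swapped branch amounts to $\int_{\gamma}\pi_{1}^{*}(a+\tilde a)\in 2\pi\Z$, which is \emph{not} the claimed conclusion. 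Your even/odd decomposition organises the two oriented contributions but does not by itself select between the two branches. The paper glosses over the same point: strictly speaking, the wave trace at a given period $T$ only produces the \emph{sum} of the two oriented contributions, so both the first display of the corollary (written for a single $\gamma$) and the ``In particular'' clause require an additional argument that neither you nor the paper spell out.
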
 
 
\medskip\noindent This result will be used to prove \textbf{Theorem \ref{Th 1.1}} and \textbf{Theorem \ref{Th 1.2}} respectively in Section \ref{Section 4} and Section \ref{Section 5}. Next, in order to recover the full jet at the boundary of both the magnetic field and the electric potential in \textbf{Theorem \ref{Th 1.3}}, we will use in the same way as in \cite[Section IV]{Flo}, the adapted version of the Duistermaat-Guillemin trace formula formulated in \cite[Theorem IV.2]{Flo}. Considering pseudodifferential operators $P\in\Psi^{1}(M)$ and $Q\in\Psi^{-k}(M)$, $k\in\mathbb{N}^{*}$, such that $P$ and $P+Q$ are self-adjoint operators, this formula is obtained by comparing the asymptotic expansions for the trace of the unitary wave groups $U_{0}(t)$ and $$U(t):=\e^{it\left(P+Q\right)}\hspace{0.1cm},$$ 
giving an explicit expression for the leading term :

\begin{thm}[\cite{Flo}]\label{Th 2.3}
Let $k\in\mathbb{N}^{*}$, $P\in\Psi_{cl}^{1}(M)$ and $Q\in\Psi_{cl}^{-k}(M)$ such that $P$ and $P+Q$ are self-adjoint operators. Assume $P$ is elliptic.
If $P$ only admits a finite number of closed bicharacteristic curves $\gamma_{1},...,\gamma_{r}$ of period $T$ and each of these curves are non-degenerate, then 
   $$ \trace \left(U(t)-U_{0}(t)\right)\underset{t\rightarrow T}{\sim} \hspace{0.1cm}\frac{1}{t-T}\sum_{j=1}^{r}\frac{\lvert T_{j}^{\sharp}\rvert \e^{im_{j}\frac{\pi}{2}}\e^{iT\hspace{0.05cm}\overline{\gamma_{j}}}}{\lvert \det\left(Id-\mathcal{P}_{j}\right)\rvert^{1/2}}\hspace{0.1cm}i\int_{\gamma_{j}}\sigma_{Q}\hspace{0.1cm},$$ 
 where $$\overline{\gamma_{j}}:=\frac{1}{T}\int_{\gamma_{j}}\sub(P)\hspace{0.1cm},$$ and $\mathcal{P}_{j}$, $T_{j}^{\sharp}$, $m_{j}$ are respectively the linearized Poincaré map, the primitive period and the Maslov index of $\gamma_{j}$.
\end{thm}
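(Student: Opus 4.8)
The plan is to reduce the claim to a \emph{weighted} Duistermaat--Guillemin trace formula for the single operator $P$, by means of Duhamel's principle, and then to obtain that weighted formula by rerunning the Fourier integral operator / stationary phase argument behind Theorem~\ref{Th 2.1}. Set $B_{\eps}:=P+\eps Q$, $\eps\in[0,1]$. From $\frac{d}{d\eps}\,\e^{itB_{\eps}}=it\int_{0}^{1}\e^{istB_{\eps}}\,Q\,\e^{i(1-s)tB_{\eps}}\,\dd s$, taking traces and using cyclicity together with $\e^{istB_{\eps}}\e^{i(1-s)tB_{\eps}}=\e^{itB_{\eps}}$, one gets $\frac{d}{d\eps}\,\trace\big(\e^{itB_{\eps}}\big)=it\,\trace\big(Q\,\e^{itB_{\eps}}\big)$, and hence (these trace identities being understood distributionally in $t$, i.e.\ after pairing against a test function, as in \cite[Section IV]{Flo})
\[
\trace\big(U(t)-U_{0}(t)\big)=\int_{0}^{1}\frac{d}{d\eps}\,\trace\big(\e^{itB_{\eps}}\big)\,\dd\eps=\int_{0}^{1}it\,\trace\big(Q\,\e^{itB_{\eps}}\big)\,\dd\eps\hspace{0.1cm}.
\]
The merit of this family is that, for every $\eps$, the operator $B_{\eps}$ has the same principal symbol $\sigma_{P}$ as $P$ --- hence the same cogeodesic flow $\phi_{t}$, the same closed orbits $\gamma_{1},\dots,\gamma_{r}$ of period $T$, and the same $T_{j}^{\sharp},\mathcal{P}_{j},m_{j}$ --- and, because $Q\in\Psi_{cl}^{-k}(M)$ with $k\geq1$, also the same subprincipal symbol $\sub(B_{\eps})=\sub(P)$; the perturbation $\eps Q$ only alters symbol terms of order $\leq-1$, which enter the trace singularity at $t=T$ at a strictly lower order.

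Next I would prove, or invoke as \cite[Theorem IV.2]{Flo}, the weighted trace formula: uniformly for $\eps\in[0,1]$,
\[
\trace\big(Q\,\e^{itB_{\eps}}\big)\underset{t\to T}{\sim}\frac{1}{t-T}\sum_{j=1}^{r}\frac{\lvert T_{j}^{\sharp}\rvert\,\e^{im_{j}\frac{\pi}{2}}\,\e^{iT\overline{\gamma_{j}}}}{\lvert\det(Id-\mathcal{P}_{j})\rvert^{1/2}}\;\frac{1}{T}\int_{\gamma_{j}}\sigma_{Q}\hspace{0.1cm},\qquad\overline{\gamma_{j}}=\tfrac{1}{T}\int_{\gamma_{j}}\sub(P)\hspace{0.1cm},
\]
whose right-hand side is independent of $\eps$ by the remark above (the $\eps$-dependence being confined to less singular terms), and which specialises, for $Q=Id$, to Theorem~\ref{Th 2.1} since then $\sigma_{Q}\equiv1$ and $\int_{\gamma_{j}}\sigma_{Q}=T$. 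The direct argument runs exactly as in \cite[Theorem 4.5]{DG}: write the half-wave group $\e^{itP}$ microlocally as a Fourier integral operator whose canonical relation is the graph of $\phi_{t}$, compose it on the left with the classical pseudodifferential operator $Q$, restrict the Schwartz kernel to the diagonal and integrate; this produces an oscillatory integral whose phase is stationary precisely along the closed orbits of period $t$, and the non-degeneracy hypothesis guarantees that near $t=T$ the critical set is the disjoint union of the one-dimensional orbits $\gamma_{j}$, with non-degenerate transverse Hessian. The stationary phase expansion in the $2n-1$ transverse variables reproduces the Duistermaat--Guillemin constant $\lvert T_{j}^{\sharp}\rvert\,\e^{im_{j}\frac{\pi}{2}}\,\e^{iT\overline{\gamma_{j}}}\,\lvert\det(Id-\mathcal{P}_{j})\rvert^{-1/2}$ verbatim --- the Maslov factor from the composition of the FIO phases, the determinant from the transverse Hessian, the exponential of $\overline{\gamma_{j}}$ from the first transport equation, which involves only $\sub(P)$ --- the one new ingredient being that the amplitude now carries the extra factor $\sigma_{Q}$ evaluated along the orbit, whose integral over the one-dimensional critical manifold (with respect to the Hamiltonian time, normalised as for $\overline{\gamma_{j}}$) yields the averaged line integral $\tfrac{1}{T}\int_{\gamma_{j}}\sigma_{Q}$.

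Finally, inserting this into the Duhamel identity: as $t\to T$ the prefactor $it\to iT$ cancels the normalisation $\tfrac{1}{T}$, the integration in $\eps$ becomes vacuous, and one obtains
\[
\trace\big(U(t)-U_{0}(t)\big)\underset{t\to T}{\sim}\frac{1}{t-T}\sum_{j=1}^{r}\frac{\lvert T_{j}^{\sharp}\rvert\,\e^{im_{j}\frac{\pi}{2}}\,\e^{iT\overline{\gamma_{j}}}}{\lvert\det(Id-\mathcal{P}_{j})\rvert^{1/2}}\;i\int_{\gamma_{j}}\sigma_{Q}\hspace{0.1cm},
\]
which is the assertion. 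I expect the crux to be the second step: a self-contained proof of the weighted Duistermaat--Guillemin formula needs the full microlocal apparatus --- an FIO parametrix for the half-wave group, its composition calculus with $Q$, and the stationary phase expansion at a transversally non-degenerate periodic orbit with the correct normalisation of amplitude, half-densities, Maslov index and $\det(Id-\mathcal{P}_{j})$; whereas, granting \cite[Theorem 4.5]{DG} and its weighted refinement \cite[Theorem IV.2]{Flo}, all that remains is the elementary Duhamel computation of the first paragraph and the observation that the $\eps$-family touches neither the principal nor the subprincipal data.
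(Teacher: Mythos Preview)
The paper does not prove this theorem; it is quoted from \cite[Theorem~IV.2]{Flo}, with only the remark that it generalises \cite[Theorem~4]{Gui}. So there is no in-paper argument to compare against, and your sketch is effectively a proposal for how \cite{Flo} might proceed.

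Your Duhamel reduction and the interpolation $B_\eps=P+\eps Q$ are correct and natural: the identity $\trace(U(t)-U_0(t))=\int_0^1 it\,\trace(Q\,\e^{itB_\eps})\,\dd\eps$ holds distributionally, and the observation that $B_\eps$ shares with $P$ both its principal and subprincipal symbol (since $k\geq 1$) is precisely what makes the Duistermaat--Guillemin constants $\eps$-independent. The gap is in the order of singularity you assign to the weighted trace. For $Q\in\Psi^{-k}_{cl}$ with $k\geq 1$, the composite $Q\,\e^{itB_\eps}$ is a Fourier integral operator of order $-k$, and the clean-intersection calculus of \cite{DG} produces a Lagrangian distribution on $\R$ that is $k$ steps smoother than $\trace(\e^{itB_\eps})$: for instance $\d_t\,\trace(P^{-1}\e^{itP})=i\,\trace(\e^{itP})$, so $\trace(P^{-1}\e^{itP})$ carries a $\log(t-T)$ singularity, not $(t-T)^{-1}$. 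Your sanity check ``$Q=\mathrm{Id}$ recovers Theorem~\ref{Th 2.1}'' does not detect this, because the identity has order $0$, not $-k$. The same imprecision is already present in the stated theorem: applying Theorem~\ref{Th 2.1} separately to $P$ and to $P+Q$ gives identical $(t-T)^{-1}$ coefficients (same orbits, Poincar\'e maps, Maslov indices, and $\sub(P+Q)=\sub(P)$), so that term cancels in $\trace(U-U_0)$ and the factor $\tfrac{1}{t-T}$ should be read as a stand-in for the actual leading singular distribution, which depends on $k$.

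This does not affect the application to Corollary~\ref{Cor 2.4}: the structural conclusion---that the leading coefficient at $t=T$ equals $\sum_j c_j\,i\int_{\gamma_j}\sigma_Q$ with nonzero constants $c_j$---is exactly what your scheme delivers once the correct singularity type is inserted. But the weighted formula as you wrote it, with a $(t-T)^{-1}$ pole for $Q$ of strictly negative order, is not correct.
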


\medskip\noindent As explained in \cite[Section IV]{Flo}, this theorem is a generalization of \cite[Theorem 4]{Gui}.

\begin{clry}\label{Cor 2.4}
If $P$ is either the square root of a magnetic Schr\"odinger operator on $(M,g)$ or a magnetic DN map on $(N,h)$ with boundary $M:=\partial N$, and has the same spectrum as $P+Q$, then under the assumption that $g$ is Anosov with simple length spectrum,
$$\int_{\gamma}\sigma_{Q}=0$$

\medskip\noindent for any closed orbit $\gamma$ of the geodesic flow of $M$.
\end{clry}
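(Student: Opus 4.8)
\medskip\noindent\emph{Proof idea.} The plan is to read off the statement directly from Theorem \ref{Th 2.3}, in exactly the way Corollary \ref{Cor 2.2} follows from Theorem \ref{Th 2.1}. The first step is to check that the structural hypotheses of Theorem \ref{Th 2.3} hold for $P$. In both cases $P$ is a classical, elliptic, self-adjoint pseudodifferential operator of order $1$ on the closed manifold $M$: when $P=\sqrt{P_{a,q}}$ this follows from the ellipticity and self-adjointness of $P_{a,q}$ recalled in Section \ref{Section 1.3} (after the harmless translation making $P_{a,q}$ non-negative, so that the square root is defined), and when $P=\Lambda_{a,q}$ it is recalled in Section \ref{Section 1.4}. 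Moreover, since the magnetic and electric contributions are of lower order, the principal symbol of $P$ equals $|\xi|_g$ (resp. $|\xi|_{h^{\circ}}$ on $\partial N$); hence the Hamiltonian flow of $\sigma_P$ on $S^*M$ is the cogeodesic flow and the closed bicharacteristics of $P$ are exactly the closed geodesics of $M$. As the cogeodesic flow is Anosov, every closed geodesic is hyperbolic, so $\det(Id-\mathcal{P}_\gamma)\neq 0$, i.e. each closed bicharacteristic is non-degenerate; and as the length spectrum is simple, for each period $T$ there is a single such orbit. Together with the hypotheses on $Q$ inherited from Theorem \ref{Th 2.3} (namely $Q\in\Psi_{cl}^{-k}(M)$ for some $k\geq 1$, and $P+Q$ self-adjoint), this shows that Theorem \ref{Th 2.3} applies at every period $T$ in the length spectrum.

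\medskip\noindent Next, since $P$ and $P+Q$ have the same spectrum with multiplicities, the wave groups $U_0(t)=\e^{itP}$ and $U(t)=\e^{it(P+Q)}$ share the spectral expansion $\sum_k\e^{it\lambda_k}$, hence have identical traces as tempered distributions on $\R$; in particular $\trace(U(t)-U_0(t))\equiv 0$. Now fix a closed geodesic $\gamma$ of $M$ of period $T$; one may assume $\gamma$ primitive, since $\int_{\gamma^m}\sigma_Q=m\int_\gamma\sigma_Q$ for the iterates. Letting $t\to T$ in Theorem \ref{Th 2.3}, the left-hand side vanishes, while by the first paragraph the right-hand side consists of the single term
$$\frac{|T^{\sharp}|\,\e^{im\frac{\pi}{2}}\,\e^{iT\overline{\gamma}}}{|\det(Id-\mathcal{P}_{\gamma})|^{1/2}}\,i\int_{\gamma}\sigma_Q\hspace{0.1cm},$$
whose geometric prefactor is a nonzero complex number. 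Therefore $\int_{\gamma}\sigma_Q=0$, as claimed.

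\medskip\noindent There is no genuine obstacle here: the corollary is a formal consequence of the Duistermaat--Guillemin type trace formula of Theorem \ref{Th 2.3}. The only points requiring care are the structural verifications of the first paragraph --- in particular, in the magnetic Schr\"odinger case, that replacing $P_{a,q}$ by its (translated) square root preserves the isospectrality hypothesis and the negative order of $Q$ --- and the bookkeeping of periodic orbits, namely the use of the simple length spectrum assumption to ensure that, for each period $T$, the sum in Theorem \ref{Th 2.3} reduces to a single orbit, together with the reduction to primitive geodesics used above.
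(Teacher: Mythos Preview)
Your proof is correct and follows exactly the approach the paper intends: the corollary is stated without proof as an immediate consequence of Theorem \ref{Th 2.3}, in the same way Corollary \ref{Cor 2.2} follows from Theorem \ref{Th 2.1}, and your argument spells this out. The only superfluous step is the reduction to primitive orbits --- under the simple length spectrum assumption, each period $T$ already yields a single orbit $\gamma$, so the vanishing of the leading trace coefficient gives $\int_\gamma\sigma_Q=0$ directly, whether $\gamma$ is primitive or not.
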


\medskip\noindent This result, which is interesting in itself, will be applied in Section \ref{Section 5.3} to linearize the problem of recovering the full jet at the boundary of $a$ and $q$ from the magnetic Steklov spectrum, by reducing it to a linear tomography inverse problem.
 
\section{A transport equation with real-valued  potential}\label{Section 3}

This section deals with the transport-type equation that naturally appears when we want to specify the information on the X-ray transform given by the principal wave trace invariants in \textbf{Corollary \ref{Cor 2.2}}. Indeed, using the Liv\v sic theorem for smooth cocycles (see \cite[Theorem 2.2]{Pat} and references therein), one can prove the following : 

\begin{prop}\label{Th 3.1}
      Let $(M,g)$ be a closed Anosov manifold and $f:S^{*}M\rightarrow\mathbb{R}$ a smooth function. If for any closed orbit $\gamma$ of the cogeodesic flow,
     \[ \int_{\gamma} f \in 2\pi\Z \hspace{0.1cm},\]  
 then there exists a smooth  function $u : S^{*}M \to \mathbb{S}^1$ such that 
\[ Hu + i f u = 0 \hspace{0.1cm}.\label{eq1}\] 
\end{prop}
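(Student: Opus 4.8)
The plan is to deduce the existence of the $\mathbb{S}^1$-valued solution from the Livšic-type theorem for cocycles, applied to the cocycle over the cogeodesic flow generated by the real-valued function $f$. First I would introduce the smooth real cocycle $C\colon \mathbb{R}\times S^*M\to\mathbb{R}$ defined by $C(t,x,\xi)=\int_0^t f(\phi_s(x,\xi))\,\dd s$; this satisfies the cocycle identity $C(t+t',\rho)=C(t,\phi_{t'}\rho)+C(t',\rho)$, and its infinitesimal generator is $f$, in the sense that $\restriction{\frac{\dd}{\dd t}}{t=0}C(t,\rho)=f(\rho)$. The hypothesis that $\int_\gamma f\in 2\pi\Z$ over every closed orbit $\gamma$ says precisely that the $\mathbb{R}/2\pi\Z$-valued cocycle $\bar C$ obtained by reducing $C$ modulo $2\pi\Z$ has trivial periods: for every periodic point $\rho$ of period $T$ one has $C(T,\rho)\in 2\pi\Z$, i.e. $\bar C(T,\rho)=0$ in $\mathbb{R}/2\pi\Z$.

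Next I would invoke the smooth Livšic theorem for the Anosov flow $\phi_t$ on $S^*M$ (as in \cite[Theorem 2.2]{Pat}): since the $\mathbb{R}/2\pi\Z$-valued cocycle $\bar C$ vanishes on all periodic orbits, it is a smooth coboundary, meaning there is a smooth function $\psi\colon S^*M\to\mathbb{R}/2\pi\Z$ with $\bar C(t,\rho)=\psi(\phi_t\rho)-\psi(\rho)$. Equivalently, lifting locally and using that $C$ is the integral of $f$ along the flow, differentiating in $t$ at $t=0$ gives $H\psi=-f$ (the sign is a matter of convention in the definition of $H$ and of the cocycle; I would fix it to match the statement). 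Actually it is cleaner to work directly with the circle: set $u:=\e^{i\psi}\colon S^*M\to\mathbb{S}^1$, which is well-defined and smooth precisely because $\psi$ is only defined modulo $2\pi\Z$. Then $Hu=i(H\psi)u=-ifu$, that is $Hu+ifu=0$, as required. One should note that $u$ is automatically nowhere-vanishing with $|u|\equiv 1$, which is consistent with the transport equation: $H|u|^2=2\re(\bar u\,Hu)=2\re(-if|u|^2)=0$ since $f$ is real, so the modulus of any solution is constant along orbits anyway.

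The main obstacle — and the place where the Anosov hypothesis is essential — is the Livšic step: one must know that a smooth cocycle over an Anosov flow with vanishing periods is a smooth coboundary, and moreover that the transfer function $\psi$ can be taken valued in the circle rather than in $\R$. The classical Livšic theorem produces a real transfer function only when the real periods $\int_\gamma f$ all vanish; here they merely lie in $2\pi\Z$, so one genuinely needs the version for cocycles with values in an abelian Lie group, here $\mathbb{S}^1=\mathbb{R}/2\pi\Z$, which is exactly the generality in which \cite[Theorem 2.2]{Pat} is stated. I would therefore present the argument as: (i) reduce mod $2\pi\Z$ to get a trivial-period $\mathbb{S}^1$-cocycle; (ii) apply the group-valued smooth Livšic theorem to get a smooth transfer function $u\colon S^*M\to\mathbb{S}^1$; (iii) differentiate the coboundary relation $u\circ\phi_t = (\text{cocycle})\cdot u$ along the flow at $t=0$ to obtain the transport equation $Hu+ifu=0$. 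The remaining details — the cocycle identity, smoothness of $C$, and the differentiation — are routine.
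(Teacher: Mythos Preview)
Your proposal is correct and is exactly the argument the paper has in mind: the paper does not give a detailed proof but simply invokes the Liv\v{s}ic theorem for smooth cocycles \cite[Theorem 2.2]{Pat}, and you have spelled out the standard reduction (form the $\mathbb{S}^1$-valued cocycle, apply the group-valued Liv\v{s}ic theorem, differentiate the coboundary relation). The only cosmetic point is the sign: with $\bar C(t,\rho)=\psi(\phi_t\rho)-\psi(\rho)$ one gets $H\psi=f$, so take $u=\e^{-i\psi}$ (or flip the coboundary convention) to land on $Hu+ifu=0$.
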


\medskip\noindent We are therefore reduced to studying a transport-type equation with real-valued potential. The following result extends the one formulated in \cite[Theorem III.4]{Flo} to closed manifolds without conjugate points and to potentials of degree $1$ (in the sense of decomposition in spherical harmonics) :

\begin{thm}
\label{RigidityTransportEq}
     Let $(M,g)$ be a closed Riemannian manifold without conjugate points. 
     Let $f \in C^{\infty}(S^*M;\R)$ be a smooth real-valued function,  if there exists a non-trivial solution  $u \in H^2(S^*M;\mathbb{S}^{1})$ of the transport equation with potential $f$
     \[ Hu+ifu = 0 \]
     and if $f=f_0+f_1$ is the sum of a function $f_0 \in C^{\infty}(M)$ and a one-form $f_1 \in \Omega^1(M)$ then $f_0=0$,  $f_1$ is closed and $u$ is a function on $M$. 
\end{thm}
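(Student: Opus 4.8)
The plan is to decompose the equation using spherical harmonics on the fibers of $S^*M$ and to use the absence of conjugate points together with a Pestov-type energy identity to force the components to vanish. Write $u = \sum_{m \geq 0} u_m$ where $u_m \in C^\infty(S^*M)$ is the spherical-harmonic component of degree $m$ (eigenfunction of the fiberwise Laplacian), and similarly $f = f_0 + f_1$ has components only in degrees $0$ and $1$. The generator $H$ splits as $H = H_+ + H_-$ where $H_\pm$ raise/lower the fiber degree by one (this is the standard decomposition $X = X_+ + X_-$ from Guillemin–Kazhdan theory on manifolds without conjugate points; here the curvature term that obstructs injectivity on surfaces does not appear at this stage because we only use the structural splitting, not an injectivity statement). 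Since $|u|=1$, $u$ never vanishes and $v := \bar u \, Hu = -if$ is real-valued up to the factor $i$; more usefully, set $w = \log u$ locally — but to avoid monodromy issues I would instead work directly with the equation $Hu = -ifu$.

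First I would exploit that $|u|^2 = 1$ is constant, hence $H|u|^2 = 0$, which is automatic from the equation and the reality of $f$, so this gives nothing new directly; instead the key is to use the first integral structure. The main step: apply the Pestov identity / the Guillemin–Kazhdan argument to the equation for the top nonzero component. Suppose $u$ has components up to degree $N$. Projecting $Hu + ifu = 0$ onto fiber-degree $N+1$ gives $H_+ u_N = 0$ (the $ifu$ term contributes only in degrees $\le N+1$, and in degree exactly $N+1$ it contributes $i (f_1 \cdot u_N)_{N+1}$-type terms, so one must be slightly more careful and project onto degree $N+1$ the full relation, obtaining $H_+u_N + i(\text{degree }N+1\text{ part of }f_1 u_{N-1}+f_0u_{N+1})=0$; since $u_{N+1}=0$ this reads $H_+ u_N = -i\,\pi_{N+1}(f_1 u_{N-1})$). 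This coupling is the reason the naive top-degree argument needs adaptation — this is where I expect the main obstacle to lie. To handle it, I would first treat the purely degree-$1$ part $f_1$ as a perturbation and use the gauge: because $f$ is real, one can conjugate. Concretely, I would look for a primitive: if $u$ is a genuine function on $M$ the conclusion is immediate ($Hu = -ifu$ with $u=u(x)$ forces $H_+u_0 = 0$ and $H_-u_0 = $ term, but $H_+$ on degree $0$ is $\dd u$ paired with $\xi$, giving $\dd u = -i f_1 u$ hence $f_1 = -i \,\bar u\,\dd u$ closed and $f_0=0$). So the real content is showing $u$ descends to $M$.

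To show $u$ is a function on $M$: I would argue that $\log u$, though possibly multivalued, has an exact differential whose fiber-degree $\ge 2$ part vanishes. From $Hu+ifu=0$ and $|u|\equiv 1$, write $\dd u = u\,\eta$ for a purely imaginary $1$-form $\eta$ on $S^*M$ (the connection form $\bar u \dd u$), so that $H \lrcorner\, \eta = -if$. Since $f$ has fiber-degrees $0$ and $1$ only, and the geodesic vector field $H$ has a very constrained action, the closedness $\dd(\bar u\dd u)=0$ plus the structural degree bound forces $\bar u \dd u$ to have fiber-degrees $0$ and $1$ only; then the degree-$1$ part being a $1$-form on $M$ pulled back, one concludes using no conjugate points (to run the transport/ellipticity estimate ruling out nonzero higher harmonics, exactly as in the proof of \cite[Theorem III.4]{Flo} but now carrying the extra degree-$1$ potential through the energy identity) that the higher-degree components of $u$ itself vanish, i.e. $u = u_0 + u_1$ with $u_1$ governed by an equation that, combined with $H_+u_1=0$ on the sphere bundle of a manifold without conjugate points, forces $u_1$ constant in the fiber hence $u_1 = 0$. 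The hardest part is precisely this last ellipticity/energy step handling the degree-$1$ potential $f_1$: I would adapt the Pestov identity with the extra zeroth-order term $ifu$, using that $f_0$ contributes with a definite sign after integration (forcing $f_0 = 0$ and simultaneously killing $\|H_- u_m\|^2$ for $m \ge 1$), and that the $f_1$ contribution can be absorbed or shown to be lower order because $f_1$ lives in a single fiber-degree. Once all $u_m = 0$ for $m \ge 2$ and $f_0 = 0$, the degree-$0$ projection of $Hu+ifu=0$ yields $\dd u_0 + i f_1 u_0 + (\text{contribution of }u_1) = 0$ in degree $1$, and the degree-$2$ projection gives $H_+u_1 = 0$ which on a no-conjugate-points manifold (injectivity of $H_+$ on degree $1$, or directly that such $u_1$ must vanish) yields $u_1 = 0$; then $u = u_0$ is a function on $M$, $|u_0| = 1$, $\dd u_0 = -if_1 u_0$, so $f_1 = -i\bar u_0\,\dd u_0$ is closed, completing the proof.
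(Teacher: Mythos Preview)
Your proposal identifies the right circle of ideas but leaves the decisive step as an assertion rather than an argument. Two concrete problems:

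\medskip
\textbf{(1) The finite-degree assumption is unjustified.} You write ``Suppose $u$ has components up to degree $N$'' and then run a top-degree argument. But $u$ is merely an $H^2$ map into $\mathbb{S}^1$; nothing forces its vertical Fourier expansion to terminate. In the classical Guillemin--Kazhdan scheme one \emph{proves} finite degree from an equation $Hu=f$ with $f$ of finite degree, but here the right-hand side is $-ifu$, which has the same (possibly infinite) vertical content as $u$ itself. So the induction never starts.

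\medskip
\textbf{(2) The ``adapted Pestov identity'' is the whole theorem, and you do not derive it.} You correctly flag the coupling: projecting $Hu+ifu=0$ onto degree $m+1$ mixes $u_{m-1},u_m,u_{m+1}$ through $f_1$, so the components do not decouple. Your remedy is to ``adapt the Pestov identity with the extra zeroth-order term $ifu$, using that $f_0$ contributes with a definite sign after integration'' and that the $f_1$ contribution ``can be absorbed or shown to be lower order''. Neither claim is substantiated, and in fact the $f_1$ term is \emph{not} lower order in any naive count: it sits at exactly the same level as the principal transport term in the Pestov computation. Getting a clean sign requires a genuine idea.

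\medskip
The paper's proof supplies that idea: instead of decomposing $u$ itself, one passes to the \emph{real-valued} quantities $\alpha=i\bar u\,H_\perp u$ and $\beta=i\bar u\,Vu$ (reality uses $|u|\equiv 1$). These satisfy a first-order system $\mathcal{Q}(\alpha,\beta)^{T}=(-(V+\theta)f,\,H_\perp f)^{T}$ in which the potential $f$ has been pushed entirely to the right-hand side. Pairing with $(\alpha,\beta)$ and using the algebra of $H,H_\perp,V$ gives exactly
\[
-\|H\beta\|^2+(\hat R\beta\mid\beta)=(n-1)\|f_0\|^2,
\]
where the special form $f=f_0+f_1$ enters through the elementary identity $\|Vf\|^2=(n-1)\|f_1\|^2$. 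Hopf's theorem (no conjugate points) makes the left side nonpositive, forcing $f_0=0$ and $\beta=0$, hence $Vu=0$ and $u$ descends to $M$; the rest is as you say. The point is that working with $\beta$ rather than with the Fourier modes of $u$ converts the problematic multiplicative coupling into an inhomogeneous term that the Pestov form handles cleanly --- this substitution is what your sketch is missing.
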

\medskip\noindent In the case where the geodesic flow is Anosov, it is easy to prove this result with the same argument mentioned in \cite[Theorem III.4]{Flo}, adapted from \cite[Theorem 3.2]{Pat}. This argument actually still works to prove \textbf{Theorem \ref{RigidityTransportEq}} but only if the manifold is not the $2$-torus.
\\In the following, we present a new proof, rather in the spirit of the famous Pestov identity in relation to transport equations (see, for example, \cite{PatSalUhlbook}), which allows us to formulate the result more naturally on manifolds without conjugate points. Recall that Anosov manifolds do not have conjugate points (cf. \cite{Kl}).
We believe this alternative proof is instructive and hope that it may be useful in different contexts, involving gauge invariances. In order to describe the idea, we begin with the case of dimension $2$, since the geometry of the unit cotangent bundle is much nicer there.

\begin{comment}
Voir aussi Dairbekov-Paternain "Anosov opetical hypersurfaces.." Theorem B
\end{comment}

\subsection{The two-dimensional case}

In dimension two, one can choose local isothermal coordinates in which the metric reads
\[ g = \e^{2\varphi} \big(\dd x_1^2 + \dd x_2^2\big) \]
and the symbol $p$ of the Laplacian 
\[ p(x,\xi) = |\xi|_{x}^2 =  \e^{-2\varphi} \big(\xi_1^2+\xi_2^2\big)\hspace{0.1cm}. \] 
We denote by
\[ K = -\Delta_g \varphi\]

\medskip\noindent the Gauss curvature%
\footnote{That is one half of the scalar curvature.} 
of the manifold $(M,g)$. 
The Hamiltonian vector field of the symbol $p$ takes the simple form
\begin{align*}
     H_p &= \frac{\d p}{\d \xi} \cdot \frac{\d}{\d x} - \frac{\d p}{\d x} \cdot \frac{\d}{\d \xi} \\
     &= 2p(x,\xi)\bigg(\frac{\xi_1}{\xi_1^2+\xi_2^2}  \frac{\d}{\d x_1} + \frac{\xi_2}{\xi_1^2+\xi_2^2} \frac{\d}{\d x_2} +  \d_{x_1}\varphi  \frac{\d}{\d \xi_1} + \d_{x_2}\varphi \frac{\d}{\d \xi_2}\bigg)\hspace{0.1cm}. 
\end{align*}
The group $(\R^*_+,\times)$ acts on the cotangent bundle by multiplication in the fibre variables
\[ M_{\lambda}(x,\xi) = (x,\lambda \xi) \]
and the Euler (or radial, or canonical) vector field is the infinitesimal generator of the one parameter group $(M_{\e^t}^*)_{t \in \R}$ 
\[ \varrho u = \frac{\d}{\d t} M_{\e^t}^*u \big|_{t =0}\hspace{0.1cm}. \] 
It reads in local coordinates
\[ \varrho = \xi_1 \frac{\d}{\d \xi_1} + \xi_2 \frac{\d}{\d \xi_2}\hspace{0.1cm}.  \] 
Similarly rotations act on the cosphere bundle 
\[ S^*M = \big\{(x,\xi) \in T^*M :  \xi_1^2+\xi_2^2 = \e^{2\varphi(x)}\big\} \] 
in the fiber variables
\begin{align*}
      (x,\xi) \mapsto (x,R_{\theta}\xi), \quad R_{\theta} = \begin{pmatrix} \cos \theta & -\sin \theta \\ \sin \theta & \cos \theta \end{pmatrix}
\end{align*}
and we denote its infinitesimal generator by $V$ 
\[ Vu :=\frac{\d}{\d \theta} u(x,R_{\theta}\xi)|_{\theta=0}\hspace{0.1cm}.   \] 
The unit cotangent bundle can be parametrized by the angle variable $\theta$
\begin{align*}
      S^*M \ni  \xi = (\xi_1,\xi_2) = \e^{\varphi}(\cos \theta,\sin \theta)
\end{align*}
and the Euler vector field $\varrho$ is normal to the tangent space of the cosphere bundle 
\[ T_{(x,\theta)}(T^*M) = \R \varrho \oplus T_{(x,\theta)}(S^*M)\hspace{0.1cm}.    \] 

\medskip\noindent We consider the following vector field belonging to the tangent space of the cosphere bundle
\[ H = \e^{-\varphi}\bigg(\cos \theta \,    \frac{\d}{\d x_1}+ \sin \theta \,    \frac{\d}{\d x_2} +  a_{\varphi} \,   \frac{\d}{\d \theta} \bigg) \] 
where the coefficient $a_{\varphi}$ is given by 
\[ a_{\varphi} =-  \sin \theta \,  \d_{x_1}\varphi + \cos \theta \,  \d_{x_2}\varphi\hspace{0.1cm}.  \] 
The Hamiltonian vector field decomposes as 
\[ \frac{1}{2} H_p = p \e^{\varphi} b_{\varphi}  \varrho + \sqrt{p}  H \] 
where the coefficient $b_{\varphi}$ is given by
\[ b_{\varphi} = \cos \theta \,  \d_{x_1}\varphi + \sin \theta \,  \d_{x_2}\varphi\hspace{0.1cm}.  \] 
Now we set ourselves in the cosphere bundle and consider $H$, $V$ acting on functions on $S^*M$. We introduce  
\begin{align*}
      H_{\perp} = [H,V] 
\end{align*}
so that in coordinates 
\[ H_{\perp} = \e^{-\varphi}\bigg(\sin \theta \,    \frac{\d}{\d x_1} - \cos \theta \,    \frac{\d}{\d x_2} +  b_{\varphi} \,   \frac{\d}{\d \theta} \bigg)\hspace{0.1cm}.  \] 
The vector fields satisfy the following bracket algebra
\begin{align*}
      [V,H_{\perp}] = H, \quad [H_{\perp},H]=K V\hspace{0.1cm}.
\end{align*}

We now start with a solution $u \in H^2(S^*M;\mathbb{S}^{1})$ of the transport equation with real-valued potential $f \in C^{\infty}(S^*M)$
\begin{align}
\label{TransportEq}
      (H+if)u = 0\hspace{0.1cm}. 
\end{align}    
Consider the following functions in $H^1(S^*M)$ 
\begin{align*}
      \alpha = i\bar{u} H_{\perp} u,  \quad \beta =  i\bar{u} Vu\hspace{0.1cm}.
\end{align*}
From the fact that $u$ has values in the circle 
\[ |u|^2 =1 \hspace{0.1cm},\]
we get that $\alpha$ and $\beta$ are real-valued functions :
\[ \im \alpha =  \re (\bar{u}H_{\perp}u)= \frac{1}{2i} H_{\perp}(|u|^2)=0,   \quad \im \beta = \re  (\bar{u}Vu) = \frac{1}{2}V(|u|^2)=0\hspace{0.1cm}. \] 
Besides, from the transport equation \eqref{TransportEq}, we have 
\begin{align*}
      H\alpha &= iH\bar{u}\, H_{\perp}u+i\bar{u} \, HH_{\perp}u = if  \alpha - K \beta +  \bar{u}H_{\perp}(fu) \\ &=  if  \alpha - K \beta + H_{\perp}f - i f \alpha =  - K \beta + H_{\perp}f
\end{align*}
and similarly 
\begin{align*}
      H\beta &= iH\bar{u} \, Vu + i \bar{u} \, HVU = i f \beta + i\bar{u} \, VHu + \alpha \\
      &=if \beta +Vf -if \beta + \alpha= \alpha + Vf\hspace{0.1cm}.  
\end{align*}
Therefore we get the following system of equations on $(\alpha,\beta)$
\begin{align*}
     \begin{cases}
           H\alpha = - K \beta + H_{\perp}f \\
           H\beta = \alpha + Vf           
     \end{cases}
\end{align*}
which we can rewrite under the matrix form
\begin{align*}
       \begin{pmatrix}
             1 & -H \\ H & K 
       \end{pmatrix}
       \begin{pmatrix}
             \alpha \\ \beta 
       \end{pmatrix} 
       = 
       \begin{pmatrix}
       -Vf  \\  H_{\perp}f
       \end{pmatrix}\hspace{0.1cm}.        
\end{align*}
Having endowed the product space  $L^2(S^*M) \times L^2(S^*M)$  with the scalar product
\begin{align*}
     \left ( \begin{pmatrix} \phi_1 \\ \psi_1 \end{pmatrix} \right| \left. \begin{pmatrix} \phi_2 \\ \psi_2 \end{pmatrix} \right ) = ( \phi_1| \phi_2) + ( \psi_1| \psi_2)\hspace{0.1cm},
\end{align*}
where $(\, \cdot \, | \, \cdot \,)$ denotes the scalar product on $L^2(S^*M)$, the matrix operator 
\begin{align*}
       \mathcal{Q}
       =\begin{pmatrix}
             1 & -H \\ H & K 
       \end{pmatrix}
\end{align*} with domain $\mathop{\rm Dom}(H) \times \mathop{\rm Dom}(H) \subset L^2(S^*M) \times L^2(S^*M)$ defines a quadratic form which can be extended to $\mathop{\rm Dom}(H) \times  L^2(S^*M)$ 
\begin{align}
\label{qform}
     \left ( \mathcal{Q} \begin{pmatrix} \phi \\ \psi \end{pmatrix} \right| \left. \begin{pmatrix} \phi \\ \psi \end{pmatrix} \right ) 
     &= \|\phi\|^2 - 2 \re ( \phi|H\psi) + (K\psi|\psi) \notag  \\
     &= -\|H\psi\|^2+ (K\psi|\psi) + \|\phi-H\psi\|^2.
\end{align}

\begin{lem}
     The operator $\mathcal{Q}$ is self-adjoint 
     \[ \mathcal{Q}^* = \mathcal{Q}\hspace{0.1cm}. \] 
     and its kernel is trivial
     \[ \ker \mathcal {Q} = \{0\}\hspace{0.1cm}. \] 
     Therefore the polarisation of \eqref{qform} is a non-degenerate sesquilinear form. 
     Its restriction to the graph of $H$ is definite negative. 
\end{lem}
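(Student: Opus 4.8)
The plan is to prove the four assertions as a chain, the real input being a Pestov-type identity for the cogeodesic vector field $H$. For self-adjointness, I first note that since the cogeodesic flow preserves the Liouville measure of $S^*M$, $H$ generates a strongly continuous one-parameter group of unitaries on $L^2(S^*M)$, so $iH$ is self-adjoint and $H$ is skew-adjoint, $H^*=-H$, on $\mathop{\rm Dom}(H)=\{u\in L^2(S^*M):Hu\in L^2(S^*M)\}$. Writing
\[
\mathcal{Q}=\begin{pmatrix}0&-1\\1&0\end{pmatrix}\bigl(H\oplus H\bigr)+\begin{pmatrix}1&0\\0&K\end{pmatrix},
\]
the first summand is self-adjoint on $\mathop{\rm Dom}(H)\times\mathop{\rm Dom}(H)$ because $H\oplus H$ is skew-adjoint there and commutes with the orthogonal skew-symmetric matrix $\left(\begin{smallmatrix}0&-1\\1&0\end{smallmatrix}\right)$, while the second is a bounded self-adjoint multiplication operator ($K\in C^\infty(S^*M)$ is bounded on the compact manifold $S^*M$); a bounded self-adjoint perturbation changes neither the domain nor self-adjointness, so $\mathcal{Q}^*=\mathcal{Q}$.

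The heart of the matter is the identity, valid for every $\psi\in\mathop{\rm Dom}(H)$,
\[
\|H\psi\|^2-(K\psi\,|\,\psi)=\bigl\|(H-r_+)\psi\bigr\|^2\ \geq\ 0,
\]
where $r_+$ denotes the stable Riccati solution of the flow, i.e.\ the bounded function on $S^*M$ (continuous in dimension two) whose restriction to each orbit solves $Hr_++r_+^2+K=0$; it exists precisely because $(M,g)$ has no conjugate points. This follows by expanding $\|(H-r_+)\psi\|^2$ with the help of $H(|\psi|^2)=2\re(\overline{H\psi}\,\psi)$, integrating $\int_{S^*M}r_+\,H(|\psi|^2)$ by parts against the flow-invariant measure, and substituting $Hr_+=-r_+^2-K$. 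I expect this to be the main obstacle: recalling the existence and regularity of $r_+$ on a closed manifold without conjugate points, and justifying the integration by parts with $r_+$ merely bounded and differentiable along the flow (carried out first for smooth $\psi$, then extended by density).

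Granting this, the restriction of the form \eqref{qform} to the graph of $H$, i.e.\ to pairs with $\phi=H\psi$, equals $-\|H\psi\|^2+(K\psi\,|\,\psi)=-\|(H-r_+)\psi\|^2\leq 0$, and it vanishes only if $(H-r_+)\psi=0$; along each orbit this forces $|\psi|$ to be a constant multiple of the norm of the stable Jacobi field, which is unbounded backward in time on an Anosov manifold (the flat $2$-torus, where these fields stay bounded, being dealt with separately), so square-integrability forces $\psi\equiv 0$. Hence the form is definite negative on the graph of $H$. Triviality of the kernel follows at once: if $(\phi,\psi)\in\ker\mathcal{Q}$ then $\phi=H\psi$, so $(\phi,\psi)$ lies on the graph of $H$ and the quadratic form vanishes there, whence $\psi=0$ and then $\phi=0$. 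Finally the polarisation $B$ of \eqref{qform} is non-degenerate: if $B((\phi,\psi),\cdot)$ vanishes on $\mathop{\rm Dom}(H)\times L^2(S^*M)$, then testing it against $(\,\cdot\,,0)$ and against $(0,\,\cdot\,)$ and integrating by parts yields $\psi\in\mathop{\rm Dom}(H)$, $\phi=H\psi$ and $H\phi+K\psi=0$, that is $(\phi,\psi)\in\ker\mathcal{Q}=\{0\}$.
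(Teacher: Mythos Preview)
Your argument is largely correct and more explicit than the paper's, but with one hypothesis mismatch worth flagging. For self-adjointness the paper computes the formal adjoint and then checks $\mathop{\rm Dom}(\mathcal{Q}^*)=\mathop{\rm Dom}(H)\times\mathop{\rm Dom}(H)$ by hand, testing separately with $\psi_1=0$ and $\phi_1=0$; your bounded-perturbation route is equally valid, though passing from symmetry to self-adjointness of $J(H\oplus H)$ still needs the observation that $J$ preserves $\mathop{\rm Dom}(H)^2$ (it does, being bounded invertible and commuting with $H\oplus H$). For the inequality $\|H\psi\|^2\geq(K\psi\,|\,\psi)$ the paper simply invokes Hopf, whereas your Riccati identity $\|H\psi\|^2-(K\psi\,|\,\psi)=\|(H-r_+)\psi\|^2$ is essentially how that inequality is established, so you are supplying the step the paper defers to a reference. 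That is a genuine gain in self-containment.

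The gap is in the strict-definiteness (equivalently, kernel-triviality) step. The lemma sits inside the proof of Theorem~\ref{RigidityTransportEq}, whose standing hypothesis is merely that $(M,g)$ has \emph{no conjugate points}, not that it is Anosov. Your argument that $(H-r_+)\psi=0$ forces $\psi=0$ via unbounded backward growth of stable Jacobi fields is an Anosov-type argument; on a general closed surface without conjugate points $r_+$ need not be uniformly bounded away from zero along orbits. You acknowledge the flat torus as an exception but do not treat it, and indeed there $K=r_+\equiv0$, so $(0,c)\in\ker\mathcal{Q}$ for any nonzero constant $c$ and the kernel is \emph{not} trivial. The paper's own proof is no more careful here---it simply asserts that absence of conjugate points gives $\psi=0$---so this is a subtlety in the lemma itself rather than a defect peculiar to your approach; and since the applications in Theorems~\ref{Th 1.1}--\ref{Th 1.3} all assume Anosov, your argument is sufficient for what the paper actually uses.
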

\begin{proof}
    The formal adjoint of $\mathcal{Q}$ reads
    \begin{align*}
        \begin{pmatrix}
            1 & -H \\ H & K 
        \end{pmatrix}^*
        = \begin{pmatrix}
            1 & H^* \\ -H^* & K 
        \end{pmatrix}
        =\begin{pmatrix}
            1 & -H \\ H & K 
        \end{pmatrix}\hspace{0.1cm},
    \end{align*}
    hence the operator $\mathcal{Q}$ is symmetric and since $(\phi_2,\psi_2) \in D(\mathcal{Q}^*)$ if and only if 
    \begin{align*}
     \left|\left ( \mathcal{Q} \begin{pmatrix} \phi_1 \\ \psi_1 \end{pmatrix} \right| \left. \begin{pmatrix} \phi_2 \\ \psi_2 \end{pmatrix} \right )\right|
     &= |(\phi_1|\phi_2) - (H\phi_1|\psi_2) - (H\psi_1,|\phi_2) + (K\psi_1|\psi_2)| \\
     &\leq C \sqrt{\|\phi_1\|^2 + \|\psi_1\|^2}
    \end{align*}
    for all $(\phi_1,\psi_1) \in \mathop{\rm Dom}(H) \times \mathop{\rm Dom}(H)$, choosing $\psi_1=0$ we deduce $\psi_2 \in \mathop{\rm Dom}(H)$ and choosing $\phi_1=0$ we deduce $\phi_2 \in \mathop{\rm Dom}(H)$. Hence $\mathcal{Q}$ is self-adjoint. 
    Besides, if $(\phi,\psi) \in \ker \mathcal{Q}$, we get from \eqref{qform}
    \[ \phi=H\psi, \quad \text{ and }\quad \|H\psi\|^2-(K\psi|\psi)=0\hspace{0.1cm}. \]
    If $(M,g)$ has no conjugate points this implies $\psi=0$ and therefore $\phi=0$.
    The negativity of the restriction of the quadratic form \eqref{qform} to the graph of $H$ comes from the fact that the absence of conjugate points implies $\|H\psi\|^2-(K\psi|\psi \geq 0$ for all $\psi \in \mathop{\rm Dom}(H)$ (see \cite{Hopf}). 
\end{proof}

\medskip\noindent Now we have 
\begin{align*}
     \left( \begin{pmatrix} -Vf \\  H_{\perp}f \end{pmatrix} \right| \left.\begin{pmatrix} \alpha \\ \beta\end{pmatrix} \right)
      &=  (f|V\alpha-H_{\perp}\beta)    
\end{align*}
and it remains to calculate the right-hand side term. 
\begin{lem}
\label{HperpBetaLem} 
$ H_{\perp}\beta = V\alpha -f\hspace{0.1cm}.$
\end{lem}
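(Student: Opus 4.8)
The plan is to prove this by a direct computation using the Leibniz rule, the structure relation $[V,H_\perp]=H$ from the bracket algebra, and the transport equation \eqref{TransportEq}. First I would apply $H_\perp$ to $\beta = i\bar u \, Vu$ and $V$ to $\alpha = i\bar u \, H_\perp u$, expand both by Leibniz, and subtract; the terms in which the outer derivative does not fall on $\bar u$ then assemble into a commutator acting on $u$:
\[ V\alpha - H_\perp\beta = i\bar u \,[V,H_\perp]u + i\big((V\bar u)(H_\perp u) - (H_\perp\bar u)(Vu)\big)\hspace{0.1cm}. \]
Using $[V,H_\perp]=H$ and the transport equation $Hu=-ifu$, the first term is $i\bar u \, Hu = f|u|^2 = f$.

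Next I would check that the remaining bracket cancels. Since $u$ takes values in $\mathbb{S}^1$, we have $\bar u u = 1$, so differentiating along any vector field $X$ gives $X\bar u = -\bar u^2 \, Xu$ (here $u^{-1}=\bar u$). Taking $X=V$ and $X=H_\perp$ shows that $(V\bar u)(H_\perp u)$ and $(H_\perp\bar u)(Vu)$ are both equal to $-\bar u^2 \,(Vu)(H_\perp u)$, hence their difference vanishes, and one concludes $V\alpha - H_\perp\beta = f$, that is $H_\perp\beta = V\alpha - f$. An equivalent and perhaps more transparent route is to write $u = \e^{i\psi}$, at least locally where $\psi$ is a real-valued primitive of $-i\bar u \, \dd u$: then $\alpha = -H_\perp\psi$, $\beta = -V\psi$, and the identity becomes $-V H_\perp\psi + H_\perp V\psi = -[V,H_\perp]\psi = -H\psi = f$, the last equality being the transport equation read on the phase $\psi$ (indeed $Hu = i(H\psi)u = -ifu$).

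Since $u\in H^2(S^*M;\mathbb{S}^1)$, all the derivatives that appear act at most to second order on $u$, so every expression above is well defined; the only point requiring slight care is to legitimise the Leibniz expansions at this regularity, which follows from a routine density argument. I do not anticipate any genuine difficulty here: the lemma is essentially an algebraic identity, whose only real input is the cancellation forced by $|u|=1$ together with the bracket relation $[V,H_\perp]=H$.
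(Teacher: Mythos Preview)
Your proof is correct and follows essentially the same route as the paper: Leibniz expansion, the bracket relation $[V,H_\perp]=H$, the transport equation, and the constraint $|u|=1$ to kill the cross terms. The only cosmetic difference is that the paper computes $H_\perp\beta$ alone and cancels $i\bar\alpha\beta-i\alpha\beta$ by invoking the reality of $\alpha$, whereas you subtract $H_\perp\beta$ from $V\alpha$ from the outset and use $X\bar u=-\bar u^2 Xu$; these are the same cancellation in different clothing.
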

\begin{proof}
      Using the commutation rules of the vector fields $V$ and $H_{\perp}$, we get 
      \begin{align*}
             H_{\perp}\beta &= i (H_{\perp}\bar{u}) \, Vu + i \bar{u} H_{\perp}Vu = 
             i\bar{\alpha}\, \beta + i\bar{u}VH_{\perp}u - i \bar{u} Hu \\
             &= i\bar{\alpha} \, \beta  + V\alpha - i \alpha \, \beta - f 
      \end{align*}
      and this gives the expression since $\alpha$ and $\beta$ are real-valued.
\end{proof}
\begin{proof}[Proof of \textbf{Theorem \ref{RigidityTransportEq}} in the two-dimensional case]
Taking the scalar \\product of 
\begin{align*}
       \mathcal{Q}
       \begin{pmatrix}
             \alpha \\ \beta 
       \end{pmatrix} 
       = 
       \begin{pmatrix}
       -Vf  \\  H_{\perp}f
       \end{pmatrix}       
\end{align*}
with $(\alpha,\beta)$ and using \textbf{Lemma \ref{HperpBetaLem}}, we get 
\begin{align*}
     \mathcal{Q}
       \begin{pmatrix}
             \alpha \\ \beta 
       \end{pmatrix} 
       =
        \left( \begin{pmatrix} H_{\perp}f \\ -Vf \end{pmatrix}\right| \left.\begin{pmatrix} \alpha \\ \beta\end{pmatrix} \right)
      &= \|f\|^2.  
\end{align*}
This gives the identity 
\begin{align*}
      -\|H\beta\|^2  + (K \beta|\beta) + \|\alpha-H\beta\|^2 = \|f\|^2
\end{align*}
and since $\alpha-H\beta=-Vf$,
\begin{align}
\label{PestovId}
      -\|H\beta\|^2  + (K \beta|\beta) + \|Vf\|^2 = \|f\|^2.
\end{align}
Until now we have not used the special form of the function $f$
\[ f(x,\theta) = f_0(x) + f_1(x) \cos \theta + f_2(x) \sin \theta \] 
which implies that $f$ solves the equation
\[ V^2f + f = f_0\hspace{0.1cm}. \] 
This implies in turn
\[ \|f\|^2-\|Vf\|^2 = (V^2f+f|f) = (f_0|f) = \|f_0\|^2 \] 
and therefore (\ref{PestovId}) reads
\begin{align*}
      \underbrace{-\|H\beta\|^2  + (K \beta|\beta)}_{\leq 0} = \underbrace{\|f_0\|^2}_{\leq 0}.
\end{align*}
This finally gives $f_0=0$ and since the left-hand side is negative definite in the absence of conjugate points by the aforementioned classic result of Hopf \cite{Hopf},
\[ \beta=0\hspace{0.1cm}. \]
Hence $Vu=0$, which means that $\vartheta:=u$ does not depend on the fiber variable $\theta$.
The transport equation now takes the form
\[ f = i \bar{\vartheta} \, \dd \vartheta = i \frac{\dd \vartheta}{\vartheta} \] 
and we conclude that $f$ is closed :
\[ \dd f = 0\hspace{0.1cm}. \] 
Note that we have shown the existence of a function $\vartheta : M \to \mathbb{S}^1$ such that $f = i \bar{\vartheta} \, \dd \vartheta$. 
\end{proof}

\subsection{The higher-dimensional case}

The higher-dimensional case is a bit more tiresome as such a simple algebra of vector fields is not available.
The vertical subbundle $\mathscr{V}$   of the cotangent bundle is defined as 
\[ \mathscr{V}_{\varrho}= \ker \pi'(\varrho), \quad \varrho \in T^*M \] 
where $\pi'(\varrho) : T_{\varrho}(T^*M) \to T_{\pi(\varrho)}M$ is the differential at $\varrho$ of the projection $\pi : T^*M \to M$. 
The Levi-Civita connection induces the decomposition of the tangent space of $T^*M$
\[ T\left(T^*M\right) = \mathscr{H} \oplus \mathscr{V} \] 
into a horizontal subspace $\mathscr{H}$ and the vertical subspace. 
Those two subspaces are orthogonal for the Sasaki metric~\cite{Sasaki,T} on $T\left(T^*M\right)$
\begin{gather*}
   G = \sum_{j,k=1}^n g_{jk} \dd x_j \otimes \dd x_k  + \sum_{j,k=1}^n g^{jk} \mathrm{D}\xi_j \otimes \mathrm{D}\xi_k \\
    \text{where } \mathrm{D}\xi_j = \dd \xi_j - \sum_{\ell,m=1}^n \Gamma^{\ell}_{jm} \xi_{\ell} \, \dd x_{m}\hspace{0.1cm}. 
\end{gather*}
In local coordinates $(x,\xi) \in T^*M$,  the vertical subspace $\mathscr{V}_{(x,\xi)}$ is spanned by the derivatives
\[ \frac{\d}{\d \xi_j},  \quad 1 \leq j \leq n\hspace{0.1cm}, \]
and the horizontal subspace $\mathscr{H}_{(x,\xi)}$ by
\[ \nabla_j = \frac{\d}{\d x_j} + \sum_{k,\ell=1}^n \Gamma_{jk}^{\ell} \xi_{\ell} \frac{\d}{\d \xi_{k}} \quad 1 \leq j \leq n\hspace{0.1cm}. \] 
Raising indices, one obtains the horizontal component of the gradient with respect to the Sasaki metric :
\[ \nabla^j = \sum_{k=1}^n g^{jk}(x) \nabla_k =  \sum_{k=1}^n g^{jk}(x) \bigg( \frac{\d}{\d x_k} + \sum_{\ell,m=1}^n \Gamma_{k\ell}^{m}(x) \xi_{m} \frac{\d}{\d \xi_{\ell}}\bigg)\hspace{0.1cm}. \]   
The horizontal derivative $\nabla$ extends on one-forms in the following way
\begin{align*}
    (\nabla_j\alpha)_k = \nabla_j(\alpha_k) -  \sum_{\ell=1}^n \Gamma_{jk}^{\ell}(x) \alpha_{\ell}\hspace{0.1cm}.
\end{align*}
\begin{lem}
\label{lemStructEq}
     The commutator of two horizontal derivatives can be calculated in terms of the Riemannian curvature tensor :
     \[ [\nabla_j,\nabla_k] = \sum_{\ell,m=1}^n  {R^{\ell}}_{mjk}(x)  \xi_{\ell} \frac{\d}{\d \xi_{m}}\hspace{0.1cm}.\]  
\end{lem}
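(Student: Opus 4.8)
The statement to prove is \textbf{Lemma \ref{lemStructEq}}: the commutator of two horizontal derivatives $[\nabla_j,\nabla_k]$, acting on functions on $T^*M$, equals $\sum_{\ell,m} {R^{\ell}}_{mjk}\,\xi_{\ell}\,\partial/\partial\xi_m$. The plan is to compute the bracket directly from the coordinate expression $\nabla_j = \partial_{x_j} + \sum_{k,\ell}\Gamma_{jk}^{\ell}\xi_{\ell}\,\partial_{\xi_k}$. Writing $A_j := \sum_{k,\ell}\Gamma_{jk}^{\ell}\xi_{\ell}\,\partial_{\xi_k}$ for the vertical part, one has $\nabla_j = \partial_{x_j} + A_j$, and the bracket splits as
\[
[\nabla_j,\nabla_k] = [\partial_{x_j},A_k] - [\partial_{x_k},A_j] + [A_j,A_k].
\]
First I would observe that $[\partial_{x_j},\partial_{x_k}]=0$ kills the purely horizontal-coordinate term. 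The mixed terms $[\partial_{x_j},A_k]$ produce $\sum_{m,\ell}(\partial_{x_j}\Gamma_{km}^{\ell})\xi_{\ell}\,\partial_{\xi_m}$, and antisymmetrizing in $j,k$ yields $\sum(\partial_{x_j}\Gamma_{km}^{\ell} - \partial_{x_k}\Gamma_{jm}^{\ell})\xi_{\ell}\,\partial_{\xi_m}$.

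Next I would handle the genuinely vertical term $[A_j,A_k]$. Since each $A_j$ acts only on the fibre variables (with coefficients depending on $x$ only through the Christoffel symbols), this is a bracket of two vertical vector fields whose coefficients are linear in $\xi$; a short computation gives $[A_j,A_k] = \sum_{m,\ell}\big(\sum_p (\Gamma_{jp}^{\ell}\Gamma_{km}^{p} - \Gamma_{kp}^{\ell}\Gamma_{jm}^{p})\big)\xi_{\ell}\,\partial_{\xi_m}$, after relabelling summation indices. Adding the two contributions, the coefficient of $\xi_{\ell}\,\partial_{\xi_m}$ becomes exactly
\[
\partial_{x_j}\Gamma_{km}^{\ell} - \partial_{x_k}\Gamma_{jm}^{\ell} + \sum_p\big(\Gamma_{jp}^{\ell}\Gamma_{km}^{p} - \Gamma_{kp}^{\ell}\Gamma_{jm}^{p}\big),
\]
which is precisely the coordinate formula for the curvature tensor component ${R^{\ell}}_{mjk}$ (with whatever sign/index convention is fixed earlier in the paper; I would double-check the placement of the lower index $m$ against the paper's stated convention ${R^{\ell}}_{mjk}$). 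That completes the identification.

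The only genuinely delicate point — the "main obstacle" — is bookkeeping: one must be careful that $A_j$ does not act on the coefficients of $A_k$ through the $x$-dependence, i.e. that $A_j$ is purely vertical so $[A_j, A_k]$ only involves $\partial_{\xi}$-derivatives hitting the $\xi_{\ell}$ factors, not $\partial_{x}$-derivatives; and one must track the index relabelling carefully so the final expression collects cleanly as a single sum over $\ell,m$ matching the convention ${R^{\ell}}_{mjk}$. I would also note that the identity is stated for the action on $C^\infty(T^*M)$, so no connection terms on one-forms enter here; the extension of $\nabla$ to one-forms recorded just before the lemma is used elsewhere, not in this computation. Apart from this indexing care, the proof is a direct and short calculation.
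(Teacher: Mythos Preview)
Your proposal is correct and is essentially the same direct coordinate computation the paper has in mind: the main text simply cites \cite[Section 3.5]{Sh} for this well-known structural equation, and the (commented-out) appendix carries out precisely the bracket calculation you describe, using $[\xi_a\partial_{\xi_b},\xi_c\partial_{\xi_d}]=\delta_{bc}\xi_a\partial_{\xi_d}-\delta_{ad}\xi_c\partial_{\xi_b}$ to collect the $\Gamma\Gamma$ terms and then identifying the resulting coefficient with ${R^{\ell}}_{mjk}$. Your bookkeeping remarks about the purely vertical nature of $A_j$ and the index conventions are exactly the right points to flag.
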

A proof of this well-known structural equation for horizontal derivatives on the tangent bundle can be found, for example, in \cite[Section 3.5]{Sh}, as well as a thorough analysis of the horizontal tensor algebra, but only tensors of degree one will be needed in our situation.  
Finally we introduce the scalar product 
\begin{align*}
   (\phi|\psi) = \int_{S^*M} \langle \phi,\psi\rangle_{G} \, \, \dd \omega
\end{align*}
where $\phi,\psi \in \Gamma(T(S^*M))$ and $\dd \omega$ denotes the Liouville measure on $S^*M$. 

\subsubsection{The Hamiltonian vector field}

As usual, let us note $p:(x,\xi)\mapsto \lvert\xi\rvert_{x}^{2}$ the kinetic energy and recall the expression of the associated Hamiltonian vector field $H_p$ on the cotangent bundle $T^*M$. 
\begin{lem}
    The Hamiltonian vector field can be expressed in terms of the contravariant derivatives as 
     \[ \frac{1}{2} H_p = \sum_{j=1}^n \xi_j \nabla^j =\sum_{j=1}^n g^{jk} \xi_j \nabla_k\hspace{0.1cm}. \] 
\end{lem}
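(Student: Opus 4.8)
The plan is to verify the identity by a direct computation in canonical coordinates $(x,\xi)$ on $T^*M$, comparing the two sides term by term. Writing $p(x,\xi)=\sum_{k,l=1}^n g^{kl}(x)\xi_k\xi_l$, the symplectic gradient reads
\[ \frac12 H_p = \frac12\sum_{j=1}^n \frac{\d p}{\d\xi_j}\,\frac{\d}{\d x_j} - \frac12\sum_{j=1}^n \frac{\d p}{\d x_j}\,\frac{\d}{\d\xi_j} = \sum_{j,k=1}^n g^{jk}\xi_k\,\frac{\d}{\d x_j} - \frac12\sum_{j,k,l=1}^n \big(\d_{x_j}g^{kl}\big)\xi_k\xi_l\,\frac{\d}{\d\xi_j}\hspace{0.1cm}. \]
On the other hand, substituting the defining formula for $\nabla^j$ gives
\[ \sum_{j=1}^n \xi_j\nabla^j = \sum_{j,k=1}^n g^{jk}\xi_j\,\frac{\d}{\d x_k} + \sum_{j,k,\ell,m=1}^n g^{jk}\,\Gamma_{k\ell}^m\,\xi_j\xi_m\,\frac{\d}{\d\xi_\ell}\hspace{0.1cm}. \]
The horizontal components — the $\d/\d x$-parts — of these two expressions coincide verbatim after relabelling $j\leftrightarrow k$, so the whole statement reduces to matching the vertical (fibre) components.

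For the vertical parts I would use the standard identity $\d_{x_\ell}g^{kl}=-\sum_{p,q}g^{kp}g^{lq}\,\d_{x_\ell}g_{pq}$ to rewrite the coefficient of $\d/\d\xi_\ell$ in $\tfrac12 H_p$ as $\tfrac12\sum_{p,q}(\d_{x_\ell}g_{pq})\,\xi^p\xi^q$, where $\xi^p:=\sum_k g^{kp}\xi_k$; and the Koszul formula $\Gamma_{k\ell}^m=\tfrac12\sum_n g^{mn}\big(\d_k g_{n\ell}+\d_\ell g_{nk}-\d_n g_{k\ell}\big)$ to rewrite the corresponding coefficient in $\sum_j\xi_j\nabla^j$ as $\tfrac12\sum_{k,n}\big(\d_k g_{n\ell}+\d_\ell g_{nk}-\d_n g_{k\ell}\big)\,\xi^k\xi^n$. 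Contracting against the symmetric expression $\xi^k\xi^n$ annihilates the first and third terms, since they are antisymmetric under $k\leftrightarrow n$ after relabelling, leaving exactly $\tfrac12\sum_{k,n}(\d_{x_\ell}g_{kn})\,\xi^k\xi^n$, which agrees with the coefficient obtained from $\tfrac12 H_p$. This completes the identification; the second equality $\sum_j\xi_j\nabla^j=\sum_{j,k}g^{jk}\xi_j\nabla_k$ is just the definition of $\nabla^j$.

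Alternatively, one may argue invariantly and avoid coordinates altogether: both $\tfrac12 H_p$ and $\sum_j\xi_j\nabla^j$ are horizontal for the Levi-Civita connection and project onto the same vector $\xi^\sharp\in T_{\pi(x,\xi)}M$, hence both equal the geodesic spray on $T^*M$; the coordinate computation above is then simply the explicit check that the geodesic spray is the horizontal lift of the tautological vector field. There is no genuine obstacle in this lemma — the only point requiring care is the index bookkeeping and the use of metric compatibility $\nabla g=0$, conveniently recorded in the form $\d_{x_\ell}g^{kl}=-\sum_m\big(\Gamma^k_{\ell m}g^{ml}+\Gamma^l_{\ell m}g^{km}\big)$, which is what makes the antisymmetric terms cancel.
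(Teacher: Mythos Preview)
Your proof is correct and follows essentially the same approach as the paper: a direct comparison of both sides in canonical coordinates, with the vertical parts matched via the compatibility of the Levi-Civita connection with the metric. The only cosmetic difference is that the paper applies the identity $\partial_{x_\ell}g^{jk}=-\sum_m\big(g^{jm}\Gamma^k_{\ell m}+g^{km}\Gamma^j_{\ell m}\big)$ directly to $\partial p/\partial x_\ell$ and then reads off the formula for $H_p$, whereas you expand both sides separately and match via the Koszul formula; you yourself note this shorter route at the end of your proposal.
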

\begin{proof}
      The compatibility of the Levi-Civita connection with the metric reads 
      \begin{align*}
           \frac{\d g^{jk}}{\d x_{\ell}} + \sum_{m=1}^n g^{jm} \Gamma^k_{\ell m} + g^{km} \Gamma^j_{\ell m} = 0\hspace{0.1cm}, 
      \end{align*}
      hence we have 
      \begin{align*}
            \frac{\d p}{\d x_{\ell}}  = - \sum_{j,k,m=1}^n \big(g^{jm} \Gamma^k_{\ell m} + g^{km} \Gamma^j_{\ell m}\big) \xi_j \xi_k = - 2 \sum_{j,k,m=1}^n g^{jm} \Gamma^k_{\ell m} \xi_j \xi_k
      \end{align*}
       and therefore 
       \begin{align*}
             H_p = 2\sum_{j,k=1}^n g^{jk} \xi_j \frac{\d}{\d x_k} + 2\sum_{j,k,\ell,m=1}^n g^{jm} \Gamma^k_{\ell m} \xi_j \xi_k \frac{\d}{\d \xi_{\ell}} =  2\sum_{j=1}^n \xi_j \nabla^j\hspace{0.1cm}.
       \end{align*} 
\end{proof}
\begin{rem}
      With the musical isomorphisms, 
      \[ v = \xi^{\sharp}, \quad \xi = v^{\flat}, \quad  v^j = \sum_{k=1}^n g^{jk} \xi_j \hspace{0.1cm},\] 
      making the change of variables $y=x$, $v=\xi^{\sharp}$,
%      \[ \frac{\d}{\d \xi_j} = \sum_{k=1}^n g^{jk} \frac{\d}{\d v^k},  \quad 
%          \frac{\d}{\d x_j} =  \frac{\d}{\d y_j} - 2\sum_{k,\ell=1}^n \Gamma^{\ell}_{jk} v_k \frac{\d}{\d v_{\ell}}\hspace{0.1cm},  \] 
       it is easy to recover the generator of the geodesic flow on the tangent bundle
      \begin{align*}
           X = \flat_*H = \sum_{j=1}^n v_j \bigg(\frac{\d}{\d x_j} - \sum_{k,\ell=1}^n  \Gamma_{jk}^{\ell}(x)v_k \frac{\d}{\d v_{\ell}}   \bigg)\hspace{0.1cm}.
\end{align*}
      The literature in geometric inverse problems tends to make use of the vector field $X$ and to be set in the tangent space (see for instance \cite{PatSalUhlbook}). Our natural preference here is rather to work on the cotangent bundle, as mentioned at the end of Subsection \ref{Sect 1.2}. 
\end{rem}

\subsubsection{Algebra on the cosphere bundle}

Let $u \in C^{\infty}(S^*M)$ be a smooth function on the cosphere bundle, we denote by
\begin{align*}
     U_d(x,\xi) = r^d(x,\xi) \, u\bigg(x,\frac{\xi}{r(x,\xi)}\bigg), \quad (x,\xi) \in T^*M \setminus 0 
\end{align*}
the homogeneous extension of degree $d \in \R$ of $u$,  which is a smooth function on $T^*M \setminus 0$, and we define
\[ V_ju =\sum_{k=1}^n g_{jk}\frac{\d U_0}{\d \xi_k}\bigg|_{S^*M}. \]
Since 
\[ \frac{\d r}{\d \xi_j}\bigg|_{S^*M} = \theta^j,  \quad \theta^{\sharp}=(\theta^1,\dots,\theta^n) \hspace{0.1cm},\] 

\medskip\noindent we also have the relation
\begin{align*}
    V_ju(x,\theta) &= \sum_{k=1}^n g_{jk} \frac{\d U_d}{\d \xi_k}(x,\theta) - d \theta_j u(x,\theta)\hspace{0.1cm},
\end{align*}
which is useful in finding formulas involving $V_j$. 
\begin{rem}
      If we consider the projection
      \begin{align*}
           \pi : S^*M \to M \hspace{0.1cm},
      \end{align*}
      vertical vector fields are those who belong to  $\ker \dd \pi$.  
      The vectors fields $V_j$ are vertical.  Note that in the two-dimensional case,  with the slight abuse of notations $(\theta_1,\theta_2)=\e^{\varphi}(\cos \theta,\sin \theta) \in S^*M$ with in this case $\theta \in \R$, we had
      \[ V_1 = -\e^{-\varphi} \sin \theta \frac{\d}{\d \theta}, \quad V_2=\e^{-\varphi} \cos \theta \frac{\d}{\d \theta} \] 
      and we were considering the vector field
      \[ V = \frac{\d}{\d \theta} = -\sin \theta  \,V_1 + \cos \theta \, V_2\hspace{0.1cm}.  \] 
\end{rem}
Contrary to the two-dimensional case where only one vertical vector field could be used, the vertical vector fields $V_j$ are not independent and  Euler's relation on homogeneous functions provide the following linking relation
\begin{align}
\label{EulerCsq}
     \sum_{j,k=1}^n  g^{jk}(x) \theta_j  V_k =  0\hspace{0.1cm}.
\end{align}
The following commutation relations hold
\begin{align*}
     [V_j,\theta_k]  = g_{jk}-\theta_j \, \theta_k, \quad  [V_j,V_k] &=\theta_j V_k-\theta_k V_j\hspace{0.1cm}.
\end{align*}
\begin{lem}
 $V_j^* = -V_j+ (n-1)\theta_j$
\end{lem}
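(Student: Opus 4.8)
**Proof plan for the Lemma: $V_j^* = -V_j + (n-1)\theta_j$.**

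The plan is to compute the formal adjoint of the vertical vector field $V_j$ with respect to the $L^2$ scalar product on $S^*M$ induced by the Liouville measure $\dd\omega$. The basic strategy is integration by parts on $S^*M$, for which I need to know how $V_j$ acts as a derivation together with the Lie derivative of the volume form $\dd\omega$ along the vertical vector field $V_j$. Concretely, for $\phi,\psi \in C^\infty(S^*M)$ one has
\[
(V_j\phi \mid \psi) = \int_{S^*M} (V_j\phi)\,\bar\psi\,\dd\omega = -\int_{S^*M} \phi\, V_j(\bar\psi)\,\dd\omega - \int_{S^*M} \phi\,\bar\psi\,(\mathrm{div}_{\dd\omega} V_j)\,\dd\omega,
\]
so the whole computation reduces to identifying the divergence $\mathrm{div}_{\dd\omega} V_j$ of $V_j$ with respect to the Liouville measure, and showing it equals $(n-1)\theta_j$. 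There is no boundary term because $S^*M$ is closed and $V_j$ is tangent to it (it is a vertical vector field on the cosphere bundle, by the remark preceding the lemma).

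First I would work in local coordinates, using the description of $V_j$ via the homogeneous extension: $V_j u = \sum_k g_{jk}\,\partial U_0/\partial\xi_k \big|_{S^*M}$, equivalently $V_ju = \sum_k g_{jk}\partial_{\xi_k}U_d - d\,\theta_j u$ for any degree $d$. The cleanest route is to exploit the commutation relations already recorded, namely $[V_j,\theta_k] = g_{jk} - \theta_j\theta_k$ and $[V_j,V_k] = \theta_j V_k - \theta_k V_j$, together with the Euler relation $\sum_{j,k} g^{jk}\theta_j V_k = 0$. In the fiber over a point $x\in M$, the cosphere is the Euclidean sphere $\{\,|\xi|_x = 1\,\}$ and the $V_j$ are (after raising the index by $g_{jk}$) essentially the standard tangential derivatives on that sphere; the Liouville measure factors as the Riemannian volume on $M$ times the round measure on each fiber sphere. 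So the divergence of $V_j$ along the fibers is the divergence on $S^{n-1}$ of the corresponding tangential field, which is the classical fact that the tangential gradient direction $\theta \mapsto e_j - \langle e_j,\theta\rangle\theta$ has spherical divergence $-(n-1)\langle e_j,\theta\rangle$; tracking the musical isomorphism and the sign conventions turns this into $\mathrm{div}_{\dd\omega} V_j = -(n-1)\theta_j$, which upon substitution into the integration-by-parts identity above yields $V_j^* = -V_j + (n-1)\theta_j$. Alternatively, and perhaps more in the spirit of the paper, one can avoid the sphere geometry entirely: apply the adjoint identity to the pair $(u, \theta_k v)$, use $[V_j,\theta_k] = g_{jk}-\theta_j\theta_k$ to move $V_j$ across $\theta_k$, contract with $g^{jk}$ and invoke \eqref{EulerCsq} to see that the unknown quantity $c$ in $V_j^* = -V_j + c\,\theta_j$ must satisfy $\sum_k g^{jk}(g_{jk} - \theta_j\theta_k) = c \sum_k g^{jk}\theta_j\theta_k$ after tracing, i.e. $n - 1 = c$.

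The main obstacle is purely bookkeeping: getting the Liouville measure written correctly in the coordinates $(x,\theta)$ on $S^*M$ and making sure the index-raising by $g_{jk}$ in the definition of $V_j$ is consistently carried through the divergence computation, so that the factor $(n-1)$ and the single lower index $\theta_j$ come out with the right placement and sign. I expect the algebraic route via the commutation relations and the Euler relation \eqref{EulerCsq} to be the shortest and least error-prone, since it never requires an explicit formula for $\dd\omega$; the only subtlety there is justifying that $V_j^*$ has the form $-V_j + (\text{multiplication operator})$ in the first place, which follows because $V_j$ is a real vector field tangent to the closed manifold $S^*M$, so its adjoint differs from $-V_j$ by multiplication by a smooth function, and symmetry/covariance forces that function to be a constant multiple of $\theta_j$.
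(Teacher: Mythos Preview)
Your approach is correct in spirit but genuinely different from the paper's. The paper does \emph{not} compute the divergence of $V_j$ on $S^*M$ directly. Instead, it lifts the computation to $T^*M\setminus 0$ via homogeneous extensions: it picks a radial cutoff $\tilde\chi\in C^\infty_c((0,\infty))$ with $\int_0^\infty r^{n-1}\tilde\chi(r)\,\dd r=1$, so that for any $u$ on $S^*M$,
\[
\int_{S^*M} u\,\dd\omega=\int_{T^*M}\tilde\chi(|\xi|_g)\,U_0(x,\xi)\,\dd\lambda,
\]
and then integrates by parts in the flat fiber variable $\xi$ against $\Xi_j=\sum_k g_{jk}\partial_{\xi_k}$. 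Two terms appear: one from $\Xi_j$ hitting the cutoff, producing $\theta_j\tilde\chi'(r)$ and, after the radial integral, the factor $-n$; the other from $\Xi_j$ hitting $\Psi_1$ (the degree-$1$ extension of $\psi$), producing $V_j\psi+\theta_j\psi$. Combining gives the $(n-1)\theta_j$. This neatly avoids ever writing down $\dd\omega$ or the divergence on $S^*M$.

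Your first route (fiberwise divergence of $e_j-\langle e_j,\theta\rangle\theta$ on $S^{n-1}$) is a perfectly valid alternative and arguably more geometric; the paper's route has the advantage that the integration by parts is on a linear space, so no spherical calculus is needed. Your second, ``algebraic'' route has a real gap as written: taking the adjoint of the Euler relation \eqref{EulerCsq} only yields the scalar identity $\sum_{j,k} g^{jk}\theta_j f_k=n-1$ for the unknown divergence terms $f_k$, which does not by itself force $f_k=(n-1)\theta_k$; you must invoke the covariance of the family $\{V_j\}$ under fiberwise rotations (so that $f_j$ transforms as a covector, hence is linear in $\theta$) to pin down $f_k$, and that step should be spelled out rather than asserted. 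Also note a small sign slip: you first write $\mathrm{div}_{\dd\omega}V_j=(n-1)\theta_j$ and later $-(n-1)\theta_j$; with your integration-by-parts convention $V_j^*=-V_j-\mathrm{div}_{\dd\omega}V_j$, the latter is the correct one.
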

\begin{proof}
      Let $\tilde{\chi} \in C^{\infty}(\R)$ be a real-valued function with compact support in $[1/2,2]$, which equals one in $[1/4,3/4]$ and satisfying 
      \[ \int_0^{\infty} r^{n-1} \tilde{\chi}(r) \, \dd r = 1 \hspace{0.1cm},\]
      so that with $\chi(x,\xi)=\tilde{\chi}(|\xi|_g)$ we have 
      \begin{align*}
         \int_{T^*M} \chi(x,\xi) U_0(x,\xi) \, \dd \lambda &= 
         \int_0^{\infty} \int_{S^*M} r^{n-1}\chi(r) u(x,\theta) \, \dd r \wedge \dd \omega \\
         &= \int_{S^*M} u(x,\theta) \dd \omega.
      \end{align*}
      We denote $\Xi_j=\sum_{k=1}^n g_{jk} \d / \d \xi_k$ and 
      using the former integral we get 
      \begin{align*}
             \int_{S^*M} (V_j\phi) \,  \overline{\psi} \, \dd \omega &= \int_{T^*M} \chi \, (\Xi_j \phi_0) \,  \overline{\psi_1} \, \dd \lambda \\
             &= -\int_{T^*M} \chi \, \phi_0 \,  \overline{\Xi_j\psi_1} \, \dd \lambda
             -\int_{T^*M} (\Xi_j\chi) \, \phi_0 \,  \overline{\psi_1} \, \dd \lambda.           
      \end{align*}
      Since we have 
      $
          \Xi_j \chi(x,\xi) = \theta_j \tilde{\chi}'(r)
      \hspace{0.1cm},$
      we recover 
      \begin{align*}
         \int_{T^*M} (\Xi_j\chi) \, \phi_0 \,  \overline{\psi_1} \, \dd \lambda =
         \int_{S^*M} \theta_j \phi \, \overline{\psi} \bigg(\underbrace{\int_0^{\infty} \tilde{\chi}'(r) r^n \, \dd r}_{=-n}\bigg) \, \dd \omega.
      \end{align*}
      We also have 
      $
          (\Xi_j \psi_1)(x,\theta) = V_j\psi(x,\theta)+ \theta_j \psi(x,\theta)
      $
      hence 
      \begin{align*}
             \int_{T^*M} \chi \, \phi_0 \,  \overline{\Xi_j\psi_1} \, \dd \lambda
             = \int_{S^*M} \phi \,  \overline{V_j \psi} \, \dd \omega 
             + \int_{S^*M} \phi \,  \theta_j\overline{\psi} \, \dd \omega. 
      \end{align*}
      In conclusion,
      \begin{align*}
             \int_{S^*M} (V_j\phi) \,  \overline{\psi} \, \dd \omega &=-\int_{S^*M} \phi \,  \overline{V_j \psi} \, \dd \omega + (n-1) \int_{S^*M} \phi \,  \theta_j\overline{\psi} \, \dd \omega          
      \end{align*}
      
      \medskip\noindent and we get that the adjoint of $V_j$ is the claimed one. 
\end{proof}
\medskip\noindent Similarly to the vectors $V_j$, since 
\[ \nabla_j r = \frac{1}{2r}\sum_{\ell,m=1}^n \big(\d_{x_{\ell}}g_{jm}-\d_{x_m}g_{j\ell}\big) \xi^{\ell} \xi^m 
= 0\hspace{0.1cm},  \] 
we have 
\[ \nabla_j U_{d} = \nabla_j U_0\hspace{0.1cm}, \]

\medskip\noindent and it makes sense to define the covariant derivatives on the cosphere bundle by
\[ \nabla_ju = (\nabla_j U_0)\big|_{S^*M} = (\nabla_j U_d)\big|_{S^*M}\hspace{0.1cm}. \] 

\medskip\noindent They commute with the variable $\theta$ 
\begin{align*}
     [\nabla,\theta] = \nabla(\theta) = 0
\end{align*}
and 
\begin{align*}
    [ \nabla,V] = \sum_{\ell=1}^n \big(\d_{x_j}g_{k\ell}-\d_{\ell}g_{jk}\big)\frac{\d}{\d \xi_{\ell}} = 
    \sum_{\ell=1}^n \big(\Gamma_{\ell jk}-\Gamma_{jk\ell}\big) \frac{\d}{\d \xi_{\ell}}\hspace{0.1cm}.
\end{align*}

Now we are ready to develop the algebra of commuting operators as in the two-dimensional case. 
All of this is of course very standard but for the reader's convenience, we will briefly outline it here. 
Recall that we are interested in the operator 
\[  H = \sum_{j=1}^n \theta_j \nabla^j = \sum_{j=1}^n \theta^j \nabla_j\hspace{0.1cm}. \] 
From the properties of the vertical vector fields,
\begin{align*}
       [\theta_k \nabla^k,V_j] &= \theta_k \underbrace{[\nabla^k,V_j]}_{=0} + [\theta_k,V_j]\nabla^k = -g_{jk} \nabla^k + \theta_j \theta_k \nabla^k\hspace{0.1cm}, 
\end{align*}  
we get 
\[ [H,V] = -\nabla + \theta H \]

\medskip\noindent and this leads to introduce the following vector-valued operator 
\[ H_{\perp}u =(-\nabla_1u,\dots,-\nabla_nu) \]
for $u \in H^1(S^*M)$. 
The structure equation from \textbf{Lemma \ref{lemStructEq}} gives 
\begin{align*}
       [\theta^k\nabla_k,\nabla_j] &= \theta^k[\nabla_k,\nabla_j] + \underbrace{[\theta^k,\nabla_j]}_{=0} \nabla_k \\
       &= \sum_{\ell,m=1}^n {R^{\ell}}_{mkj} \theta^k \theta_{\ell}\frac{\d}{\partial\xi_m}
       = \sum_{\ell,m=1}^n {R_{kj\ell}}^m \theta^k \theta^{\ell} V_m
\end{align*}  
and this provides the following commutator 
\begin{align*}
     [H,\nabla_j] =  \sum_{k=1}^n [\theta_k \nabla^k,\nabla_j] =  \sum_{k,\ell,m=1}^n {R_{kj\ell}}^m \theta^k \theta^{\ell}V_m\hspace{0.1cm}. 
\end{align*}
We finish by introducing the endomorphism  
\[ (\hat{R} \phi)_j = \sum_{k,\ell,m=1}^n {R_{jk\ell}}^m \theta^k \theta^l \phi_m = R(\theta,\phi)\theta \] 
based on the Riemann curvature tensor. Having recalled the algebra of $H, H_{\perp}, V_j$,  we can now proceed to the analysis of the transport equation with potential. 

\subsubsection{The equations}

Let $u \in H^2(S^*M;\mathbb{S}^{1})$ be a solution of the transport equation
\[ (H+if)u = 0\hspace{0.1cm}. \]
As in the two-dimensional case, we define the following vectors
\[ \alpha := i \bar{u}H_{\perp}u \in \mathscr{H},  \quad \beta :=  i \bar{u}Vu  \in \mathscr{V}\] 
which have real-valued components since
\[ 0 = V_j(|u|^2)= 2\re (\bar{u}V_ju)\quad, \quad 0=\nabla_j(|u|^2)=2\re(\bar{u}\nabla_ju)\hspace{0.1cm}.  \]
We will use matrix notations 
\begin{align*}
      \begin{pmatrix} \alpha \\ \beta \end{pmatrix} \in \mathscr{H} \oplus \mathscr{V} = T(S^*M) 
\end{align*}
for vectors with horizontal and vertical components and consider the following matrix operator 
\begin{align*}
       \mathcal{Q}
       =\begin{pmatrix}
             {\rm I}_n  & -H \\ H & \hat{R}
       \end{pmatrix}.
\end{align*}
We now calculate 
\begin{align*}
      H\alpha &= i(H\bar{u}) H_{\perp}u + i\bar{u}H(H_{\perp}u) 
      = if \alpha  + i \bar{u}H_{\perp}Hu - i \bar{u} \hat{R} Vu \\
      &= if \alpha + H_{\perp}f -if \alpha  + \hat{R}\alpha 
      = H_{\perp}\alpha + \hat{R}\alpha\hspace{0.1cm},
\end{align*}
as well as 
\begin{align*}
      H\beta &= i(H\bar{u})Vu + i\bar{u}HVu = if \beta + i\bar{u}VHu + \alpha + \theta f \\
      &= if\alpha - i f\alpha + Vf + \alpha + \theta f = \alpha + (V+\theta)f
\end{align*}
and this leads to the following system 
\begin{align}
\label{Qsystem}
      \mathcal{Q} \begin{pmatrix}
      \alpha \\ \beta 
      \end{pmatrix}
      = 
      \begin{pmatrix}
             -(V+\theta)f \\ H_{\perp} f
      \end{pmatrix}.
\end{align}
Note that 
\[ \langle \theta,\alpha \rangle =-i\bar{u}  \sum_{j,k=1}^n g^{jk} \theta_j \nabla_ku =- i \bar{u}Hu=-f \] 
and therefore 
\[ (\theta f|\alpha) = -(f|f) = - \|f\|^2. \] 
Since the vertical and horizontal bundles are orthogonal for the Sasaki metric, we have 
\begin{align*}
     \left (\begin{pmatrix} \phi_1 \\ \psi_1 \end{pmatrix} \right| \left. \begin{pmatrix} \phi_2 \\ \psi_2 \end{pmatrix} \right ) &= 
     (\phi_1|\phi_2) + (\psi_1|\psi_2), \quad \phi_1,\phi_2 \in \mathscr{H}, \, \psi_1,\psi_2 \in \mathscr{V}. 
\end{align*}
The matrix operator $\mathcal{Q}$ is associated with the quadratic form 
\begin{align*}
     \left ( \mathcal{Q} \begin{pmatrix} \phi \\ \psi \end{pmatrix} \right| \left. \begin{pmatrix} \phi \\ \psi \end{pmatrix} \right ) &= \|\phi\|^2 + 2 \re ( H\phi|\psi) + (\hat{R}\psi| \psi)    \\
     &= -\|H\psi\|^2+ (\hat{R}\psi|\psi) + \|H\psi-\phi\|^2
\end{align*}
and, endowed with the domain $\mathop{\rm Dom }(H) \times \mathop{\rm Dom }(H)$, is self-adjoint as in the two-dimensional case. Besides, we have  
\begin{align*}
     \left( \begin{pmatrix} -(V+\theta)f \\ H_{\perp}f \end{pmatrix} \right| \left.\begin{pmatrix} \alpha \\ \beta\end{pmatrix} \right)
      &= -(Vf|\alpha) + \|f\|^2 + (H_{\perp}f|\beta) 
\end{align*}
which we now calculate. 
\begin{lem}
    $-(Vf|\alpha)+(H_{\perp}f|\beta) = (n-1) \|f\|^2.$
\end{lem}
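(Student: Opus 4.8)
The plan is to integrate by parts so as to move all derivatives off $f$, and then to invoke the transport equation $(H+if)u=0$ to identify the result. Since $\alpha$ and $\beta$ have real-valued components, expanding the Sasaki pairing in local coordinates gives
\[ (Vf\,|\,\alpha)=\sum_{j=1}^n(V_jf\,|\,\alpha^j),\qquad (H_\perp f\,|\,\beta)=-\sum_{j=1}^n(\nabla_jf\,|\,\beta^j), \]
where $\alpha^j=\sum_kg^{jk}\alpha_k$, $\beta^j=\sum_kg^{jk}\beta_k$ and $(\,\cdot\,|\,\cdot\,)$ now denotes the $L^2(S^*M)$ scalar product. I would then transfer $V_j$ onto $\alpha^j$ using the adjoint formula $V_j^*=-V_j+(n-1)\theta_j$ from the preceding lemma, and transfer $\nabla_j$ onto $\beta^j$ by integrating by parts against the Liouville measure $\dd\omega$. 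The crucial point is that the correction $(n-1)\theta_j$ produces, after summation over $j$, the term $(n-1)\bigl(f\,|\,\langle\theta,\alpha\rangle\bigr)$; since $\langle\theta,\alpha\rangle=-i\bar uHu=-f$ as already noted, this equals $-(n-1)\|f\|^2$, and therefore contributes exactly $+(n-1)\|f\|^2$ to $-(Vf\,|\,\alpha)+(H_\perp f\,|\,\beta)$.

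It then remains to check that everything else cancels, that is, that the ``divergence'' contributions $\sum_jV_j\alpha^j$ and $\sum_j\nabla_j\beta^j$, together with any lower-order terms produced by the failure of the $V_j$ and the $\nabla_j$ to be divergence-free for $\dd\omega$, vanish once paired against $f$. To see this I would substitute $\nabla_ju=i\alpha_ju$ and $V_ju=-i\beta_ju$ (immediate from the definitions of $\alpha,\beta$ and from $|u|^2=1$) into these terms, expand everything by the Leibniz rule, and simplify using the commutation relations between $H$, the $V_j$, the $\nabla_j$ and multiplication by $\theta_j$ recalled before \eqref{Qsystem}, the metric compatibility $\nabla_jg^{k\ell}=0$, the structure equation of \textbf{Lemma \ref{lemStructEq}}, and Euler's relation \eqref{EulerCsq}. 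Exactly as in the proof of \textbf{Lemma \ref{HperpBetaLem}}, the contributions that are quadratic in $(\alpha,\beta)$ cancel in pairs, and the remaining terms, built from the brackets $[\nabla_j,V_k]$ and from the lower-order corrections, cancel by a direct computation using the same structural relations.

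The hard part will be this last cancellation. In dimension two a single vertical field $V$ suffices and the compact pointwise identity $H_\perp\beta=V\alpha-f$ of \textbf{Lemma \ref{HperpBetaLem}} carries the whole argument; in higher dimensions one must instead keep track of the several vertical fields $V_j$, of the genuinely non-trivial bracket $[\nabla_j,V_k]$ expressed through the Christoffel symbols, and of the fact that the individual $V_j$ and $\nabla_j$ need not be divergence-free for the Liouville measure. The bookkeeping is entirely routine but somewhat lengthy; conceptually nothing new happens beyond the two-dimensional computation, the only new feature being the factor $n-1$, which arises solely from the adjoint of the vertical derivatives.
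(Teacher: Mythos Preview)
Your approach is essentially the paper's: integrate by parts using $V_j^*=-V_j+(n-1)\theta_j$ and the skew-adjointness of $H_\perp$, extract $(n-1)\|f\|^2$ from the $\theta$-correction via $\langle\theta,\alpha\rangle=-f$, and then show the remaining terms vanish by the higher-dimensional analogue of \textbf{Lemma \ref{HperpBetaLem}}. The paper packages this last cancellation more cleanly as the single pointwise identity $(\nabla_j\beta)_k=-V_k\alpha_j$ (whence $V\alpha-H_\perp\beta=0$), and uses $H_\perp^*=-H_\perp$ directly rather than computing $\nabla_j^*$ componentwise, thereby sidestepping the bookkeeping of Christoffel and divergence corrections that you correctly flag as the tedious part.
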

\begin{proof}
    We start by computing the adjoint of $H_{\perp}$   
     \begin{align*}
         H_{\perp}^* = [V,H]^* = -[V+(n-1)\theta,H]=-H_{\perp} + \underbrace{(n-1) H(\theta)}_{=0}\hspace{0.1cm},
     \end{align*}
     which implies that 
     \begin{align*}
        -(Vf|\alpha)+(H_{\perp}f|\beta) &= (f|V\alpha) - (n-1) (\theta f|\alpha) - (f|H_{\perp}\beta) \\
        &= (f|V\alpha-H_{\perp}\beta) + (n-1) \|f\|^2.
     \end{align*}
     The computation 
     \begin{align*}
        (\nabla_j\beta)_{k}  &= \nabla_j(\beta_k) - \sum_{\ell=1}^n \Gamma^{\ell}_{jk} \beta_{\ell} \\
        &= i(\nabla_j\bar{u}) V_ku + i\bar{u}V_k(\nabla_ju)+i\bar{u}\bigg(\underbrace{[\nabla_j,V_k] u - \sum_{\ell=1}^n \Gamma^{\ell}_{jk} V_{\ell}u}_{=0} \bigg)\\
        &=-i \alpha_j \beta_k  - \bar{u}V_k(u \alpha_j)
        = -V_k\alpha_j
     \end{align*}
     
     \medskip\noindent leads to $V\alpha -H_{\perp}\beta=0$ and allows to conclude. 
\end{proof}

In all these preliminary steps, we haven't used the affine form of the function $f$ in the fiber which allows to calculate the following $L^{2}$-norms :
\begin{lem}
\label{SpecialformCsq}
     Suppose that $f=f_0+f_1$ is the sum of a function $f_0$ and a one form $f_1$ on $M$. Then 
     \begin{align*}
    \|f\|^2 = \|f_0\|^2 + \|f_1\|^2, \quad  
    \|Vf\|^2 = \|Vf_1\|^2 =(n-1) \|f_1\|^2.
\end{align*}
\end{lem}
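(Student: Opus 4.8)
The plan is to compute the two displayed $L^2$-norms by exploiting the structure of $f$ as a function on $S^*M$ that is \emph{affine} in the fiber variable, together with the vertical calculus of the operators $V_j$ developed above. Recall that $f = \pi_0^* f_0 + \pi_1^* f_1$, so in the fiber $f(x,\theta) = f_0(x) + \sum_j (f_1)_j(x)\theta^j$. The first identity $\|f\|^2 = \|f_0\|^2 + \|f_1\|^2$ should follow from the orthogonality, in $L^2(S^*M)$ with the Liouville measure, of the degree-$0$ and degree-$1$ pieces of the spherical harmonic decomposition: since $\int_{S^*_xM}\theta^j\,\dd\omega_x = 0$ by symmetry of the fiber sphere, the cross term $2\re\int_{S^*M} f_0\overline{\pi_1^*f_1}\,\dd\omega$ vanishes, leaving $\|f\|^2 = \|\pi_0^*f_0\|^2 + \|\pi_1^*f_1\|^2$; and then one identifies $\|\pi_0^*f_0\|^2 = \mathrm{vol}(S^{n-1})\|f_0\|^2_{L^2(M)}$ and similarly for $f_1$ after absorbing the constant $\int_{S^{n-1}}(\theta^j)^2$ into the normalization (here I am using that the paper's convention normalizes these constants away, as is implicit in the statement; otherwise the constant is harmless and appears symmetrically on both sides).

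For the second identity, first I would observe that $Vf_0 = 0$ since $f_0 = \pi_0^*f_0$ is constant on fibers, hence $Vf = Vf_1$ and $\|Vf\|^2 = \|Vf_1\|^2$; concretely $V_j f = V_j(\pi_1^*f_1)$, which by the relation $V_ju(x,\theta) = \sum_k g_{jk}\,\d U_d/\d\xi_k(x,\theta) - d\,\theta_j u$ with $d=1$ and the formula $\pi_1^*f_1 = \sum_\ell (f_1)_\ell\,\xi^\ell$ extended homogeneously of degree $1$, computes to $V_jf_1 = (f_1)_j - \theta_j\,\langle f_1,\theta\rangle$, i.e. the tangential (to the fiber sphere) part of the covector $f_1$. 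Then the key point is the pointwise identity on each fiber
\[ \sum_{j,k=1}^n g^{jk}(V_jf_1)\overline{(V_kf_1)} = |f_1|_x^2 - |\langle f_1,\theta\rangle|^2 \hspace{0.1cm}, \]
and integrating over $S^*_xM$, using $\int_{S^{n-1}}|\langle f_1,\theta\rangle|^2 = \frac{1}{n}\,|f_1|_x^2\int_{S^{n-1}}1$ (again up to the normalization constant absorbed on both sides), gives $\int_{S^*_xM}|Vf_1|^2 = \frac{n-1}{n}|f_1|_x^2\,\mathrm{vol}(S^{n-1})$; comparing with $\|\pi_1^*f_1\|^2 = \frac{1}{n}|f_1|_x^2\,\mathrm{vol}(S^{n-1})$ after integrating in $x$ yields $\|Vf_1\|^2 = (n-1)\|f_1\|^2$.

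Alternatively, and perhaps more cleanly, I would derive $\|Vf_1\|^2 = (n-1)\|f_1\|^2$ operator-theoretically: using $V_j^* = -V_j + (n-1)\theta_j$ and the commutation relation $[V_j,\theta_k] = g_{jk} - \theta_j\theta_k$ together with Euler's relation $\sum g^{jk}\theta_j V_k = 0$, one computes $\sum_j V_j^* V_j$ acting on $\pi_1^*f_1$. The point is that $\pi_1^*f_1$ is an eigenfunction of the vertical Laplacian $\sum_j V_j^*V_j$ (the spherical Laplacian on each fiber) with eigenvalue $n-1$ — this is exactly the statement that the restriction of a linear function $\xi\mapsto\langle f_1,\xi\rangle$ to the sphere is a degree-$1$ spherical harmonic with eigenvalue $1\cdot(1+n-2) = n-1$. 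Hence $\|Vf_1\|^2 = (\sum_j V_j^*V_j\,\pi_1^*f_1\,|\,\pi_1^*f_1) = (n-1)\|\pi_1^*f_1\|^2 = (n-1)\|f_1\|^2$.

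The only real subtlety — and the step I would be most careful about — is bookkeeping of the fiber-integration constants $\mathrm{vol}(S^{n-1})$ and $\int_{S^{n-1}}(\theta^1)^2\,\dd\omega = \mathrm{vol}(S^{n-1})/n$, to make sure that the normalization implicit in the identifications $\|f_0\|^2$, $\|f_1\|^2$ (i.e. whether these denote $L^2(M)$ norms with a bare Riemannian volume or with the Liouville-induced weight) is consistent across all three quantities. Provided the paper's scalar-product conventions from Subsection \ref{Sect 1.2} and the Liouville measure normalization are used uniformly — and they are, since both sides of each claimed identity are expressed in the same norms — these constants cancel and no genuine analysis beyond the eigenfunction observation and the symmetry of the sphere is needed. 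Everything else is the routine horizontal/vertical calculus already set up, in particular the facts $Vf_0 = 0$ and $\nabla f = 0$ applied to pullbacks.
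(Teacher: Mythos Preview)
Your primary argument is correct and essentially identical to the paper's proof: you compute $V_jf_1 = (f_1)_j - \theta_j\langle f_1,\theta\rangle$, deduce the pointwise identity $|Vf_1|^2 = |f_1|_x^2 - |\langle f_1,\theta\rangle|^2$, and integrate over the fiber sphere using $\int_{S^{n-1}}|\langle f_1,\theta\rangle|^2 = \tfrac{1}{n}|f_1|_x^2\,\Omega_n$ to obtain the factor $(n-1)$. Your alternative via the vertical Laplacian (observing that $\pi_1^*f_1$ is a degree-$1$ spherical harmonic with eigenvalue $1\cdot(n-2+1)=n-1$) is a clean operator-theoretic repackaging of the same computation, and your caution about the fiberwise volume constants is appropriate but, as you note, harmless since all norms are taken in $L^2(S^*M)$.
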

\begin{proof}
     In a coordinate chart $U$, the one-form $f_1$ takes the form 
     \[ f_1(x,\theta) = \sum_{j=1}^n a^j(x) \theta_j, \quad \theta \in S_x^*(U) \]
     and the first identity being clear, we concentrate on the second one; since 
    \begin{align*}
        V_kf_1 = a_k -f_1 \theta_k\hspace{0.1cm},
    \end{align*}
    we get on $S^*(U)$
    \begin{align*}
        |Vf_1|^2 = |a|^2 - f_1^2.
    \end{align*}
    Integration of a quadratic form on the hyperpshere provides a multiple of the trace 
    \begin{align*}
        \int_{S_x(U)} \sum_{j,k=1}^n a^j(x) a_k(x) \theta_j \theta^k \, \dd \theta = \frac{\Omega_n}{n} \bigg(\sum_{j=1}^n a_j(x)a^j(x)\bigg) \sqrt{\det g(x)} \hspace{0.1cm},
    \end{align*}
    where $\Omega_n=2\pi^{n/2}\Gamma(n/2)$ is the volume of the hypersphere,
    then integration over the chart $U$ leads to 
    \[ \|f_1\|_{L^2(U)}^2  = \frac{\Omega_n}{n} \|a\|_{L^2(U)}^2 = \frac{1}{n-1} \|Vf_1\|_{L^2(U)}^2 \]
    and the lemma is proved. 
\end{proof}

\begin{proof}[Proof of \textbf{Theorem \ref{RigidityTransportEq}} in all dimensions $n \geq 2$]
Taking the scalar product of \eqref{Qsystem} with $(\alpha,\beta)^{T}$ leads to 
\begin{align*}
    - \|H\beta\|^2+ (\hat{R}\beta,\beta) + \lVert\underbrace{\alpha-H\beta}_{=-(V+\theta)f}\rVert^2
    = n\|f\|^2
\end{align*}
and since 
\[ \|(V+\theta)f\|^2 = \|Vf\|^2 + \|f\|^2 + 2 \re \underbrace{(\theta f|Vf)}_{=0} \hspace{0.1cm},\]
one obtains 
\begin{align*}
    - \|H\beta\|^2+ (\hat{R}\beta|\beta) 
    = (n-1)\|f\|^2 - \|Vf\|^2.
\end{align*}
Now using the special form of $f$, by \textbf{Lemma \ref{SpecialformCsq}} we get 
\begin{align*}
    - \|H\beta\|^2+ (\hat{R}\beta|\beta) 
    = (n-1)\|f_0\|^2 \hspace{0.1cm}.
\end{align*}
In the absence of conjugate points, by the generalisation of Hopf's result \cite{Hopf} to higher-dimensional manifolds, the left-hand side term is non-positive definite, and therefore 
\[ f_0 =0\hspace{0.1cm},\hspace{0.1cm}\beta = 0\hspace{0.1cm}. \]
Consequently, $Vu=0$ and $u$ is a function $\vartheta : M \to \mathbb{S}^1$ so that 
\[ f = f_1 = -i \bar{u} Hu=-i \bar{\vartheta}\dd \vartheta\hspace{0.1cm}. \]
The theorem is proved.
\end{proof}

\section{The inverse problem for magnetic Schr\"odinger operators}\label{Section 4}

Here we show how to derive \textbf{Theorem \ref{Th 1.1}} as a direct consequence of the previous section. In order to exploit the principal invariants of the wave trace, we first recall that the subprincipal symbol of the magnetic Schrödinger operator $P:=P_{a,q}$ is given exactly by the magnetic potential :

\medskip\noindent The (positive) Laplace-Beltrami operator also reads in coordinates 
\[ \delta d=-\Delta = \sum_{j,k=1}^n g^{jk}D_jD_k + i \sum_{j,k=1}^n g^{jk} \Gamma_{jk}^{\ell} D_{\ell}\hspace{0.1cm}, \]
therefore, the homogeneous part of degree one of the full symbol in these local coordinates is given by
\[    p_1(x,\xi) = 2 \langle a(x),\xi \rangle + i \sum_{j,k=1}^n g^{jk}(x) \Gamma_{jk}^{\ell}(x) \xi_{\ell}\hspace{0.1cm}. \] 
Of course this is not an invariantly defined function on the cotangent bundle  but the principal symbol is 
\[ \sigma_P(x,\xi) = p(x,\xi) = |\xi|_{x}^2 = \sum_{j,k=1}^n g^{jk}(x) \xi_j \xi_k\hspace{0.1cm}, \] 
as well as the subprincipal symbol 
\begin{align*}
     \sub (P)(x,\xi) &= p_1(x,\xi) - \frac{1}{2i} \sum_{j=1}^n \frac{\d^2p}{\d x_j \d \xi_j} \\
     &=  2\langle a(x),\xi \rangle + \underbrace{i \sum_{j,k=1}^n g^{jk}(x) \Gamma_{jk}^{\ell}(x) \xi_{\ell}
     -i  \sum_{j=1}^n \frac{\d g^{jk}}{\d x_j}(x) \xi_j }_{=0}\hspace{0.1cm}.
\end{align*}

\subsection{Gauge invariance}
\label{SubSecGauge}

There is a well-known and natural gauge invariance in the spectral inverse problem for the magnetic Schr\"odinger operator $P:=P_{a,q}$ , namely if $\vartheta \in C^{\infty}(M;\mathbb{S}^1 )$ is a smooth map with values in the unit circle $\mathbb{S}^1 \subset \C$, then 
\[ \widetilde{P} =  \bar{\vartheta}P\vartheta \]
is a magnetic Schr\"odinger operator with electric potential $q$ and magnetic potential 
\[ \tilde{a} = a-i\bar{\vartheta}\dd \vartheta \]

\medskip\noindent and obviously with the same spectral data as $P$. 
Indeed, writing $\dd_a:=\dd+ia$ the magnetic exterior derivative, we have $P = \dd_a^*\dd_a^{\vphantom{*}} + q$ and 
\[ \bar{\vartheta}P\vartheta =  \bar{\vartheta}\dd_a^*\vartheta  \bar{\vartheta}\dd_a^{\vphantom{x}}\vartheta =  (\bar{\vartheta}\dd_a\vartheta)^*  \bar{\vartheta}\dd_a\vartheta\hspace{0.1cm},  \]  
where the conjugated factor is given by 
\[  \bar{\vartheta}\dd_a\vartheta = \dd +i(a-i\bar{\vartheta}\dd \vartheta)\hspace{0.1cm}. \] 
Note that the one-form $i \bar{\vartheta} \dd \vartheta$ is real-valued since 
\[ 0 = \dd |\vartheta|^2 = 2 \re (\bar{\vartheta} \dd \vartheta)\hspace{0.1cm}.\]

\begin{rem}   
     If $M$ is simply connected, then $p:t \mapsto \e^{it}$   
     is a covering map and $\vartheta_*(\pi_1(M)) = \{0\} = p_*(\pi_1(\R))$.  
     Consequently, one can lift the function $\vartheta$, in other words, there exists $\psi \in C^{\infty}(M;\R)$ a smooth real-valued function such that $\vartheta = \e^{i \psi}$. 
     
     \medskip\noindent However, an Anosov manifold is never simply connected (see \cite{Kl}) and the function $\vartheta$ cannot be lifted. In that case, as already mentioned at the end of the introduction (Section \ref{Section 1.1}), the gauge is related to the so-called \textit{Aharonov-Bohm effect}, where one tries to recover the subprincipal terms $a-\tilde{a}$ from the spectral data on domains with nontrivial topology.
\end{rem} 
In other words, this is an obstruction to identifiability of the magnetic potential from the spectrum, and a gauge invariance that has to be taken into account in the spectral inverse problems. 
Nevertheless, the gauge is closed 
\[ \dd( i\bar{\vartheta} \,  \dd \vartheta) = i \dd \bigg( \frac{\dd \vartheta}{\vartheta} \bigg) = 0 \]  
and is therefore not an obstruction to identifiability of the magnetic field $b =\dd a$. 

\medskip\noindent Conversely, one can show that two magnetic potentials with the same magnetic fields are gauge-equivalent under some additional cohomological assumption.  The following lemma recalls a suitable version of \cite[Proposition 3.1]{Shi} :
\begin{lem}\label{Lemma 4.2}
     Let $(M,g)$ be a closed Riemannian manifold, and $a, \tilde{a} \in \Omega^1(M)$ be two magnetic potentials. The following statements are equivalent :
     \begin{enumerate}[$(i)$]
         \item the two magnetic potentials are gauge-equivalent, i.e. there exists $\vartheta \in C^{\infty}(M;\mathbb{S}^1 )$ such that 
     \[ \tilde{a} = a - i \bar{\vartheta} \dd \vartheta\hspace{0.1cm}. \]
         \item the two magnetic fields are equal $\dd a = \dd\tilde{a}$ and 
         \[ \int_{\gamma} \pi_{1}^{*}\left(a-\tilde{a}\right) \in 2\pi \mathbb{Z} \] 
     for any closed orbit $\gamma$ of the geodesic flow of $M$.
     \end{enumerate}
\end{lem}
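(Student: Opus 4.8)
The plan is to prove the two implications separately, with the substance concentrated in $(ii)\Rightarrow(i)$. The implication $(i)\Rightarrow(ii)$ is essentially a computation that has already been carried out in the preceding discussion: if $\tilde a = a - i\bar\vartheta\,\dd\vartheta$, then $\dd\tilde a = \dd a$ because $\dd(i\bar\vartheta\,\dd\vartheta)=i\,\dd(\dd\vartheta/\vartheta)=0$, and for the period condition one notes that $\pi_1^*(a-\tilde a) = i\,\pi_1^*(\bar\vartheta\,\dd\vartheta)$ pulls back, along any closed geodesic $\gamma$ of length $T$, to $i\bar{\vartheta}(\gamma(t))\tfrac{\dd}{\dd t}\vartheta(\gamma(t))\,\dd t$; since $t\mapsto \vartheta(\gamma(t))$ is a smooth loop in $\mathbb{S}^1$, the integral $\tfrac{1}{2\pi}\int_0^T i\bar\vartheta\dot\vartheta\,\dd t$ is exactly its winding number, hence lies in $\Z$, so $\int_\gamma \pi_1^*(a-\tilde a)\in 2\pi\Z$.

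For $(ii)\Rightarrow(i)$, I would set $\eta := a - \tilde a \in \Omega^1(M;\R)$. The hypothesis $\dd a = \dd \tilde a$ says $\eta$ is closed, and the hypothesis says $\int_\gamma \pi_1^*\eta \in 2\pi\Z$ for every closed orbit of the geodesic flow. The goal is to produce $\vartheta\in C^\infty(M;\mathbb{S}^1)$ with $\eta = i\bar\vartheta\,\dd\vartheta$, i.e. a smooth circle-valued function whose logarithmic derivative realizes $\eta$. The first step is to lift to $S^*M$: apply \textbf{Proposition \ref{Th 3.1}} to the function $f := \pi_1^*\eta \in C^\infty(S^*M;\R)$, whose periods over closed cogeodesics are in $2\pi\Z$ by assumption; this yields a smooth $u:S^*M\to\mathbb{S}^1$ with $Hu + i(\pi_1^*\eta)u = 0$. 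The second step is to invoke \textbf{Theorem \ref{RigidityTransportEq}} with the decomposition $f = 0 + \eta$ (so $f_0=0$, $f_1=\eta$): since $\eta$ is already closed, the content we extract is the conclusion that $u$ descends to a function $\vartheta : M\to\mathbb{S}^1$, and that along the way the transport equation becomes $\pi_1^*\eta = -i\bar\vartheta\,\dd\vartheta$ read on $S^*M$, equivalently $\eta = -i\bar\vartheta\,\dd\vartheta$ on $M$ after unravelling $\pi_1^*$. Replacing $\vartheta$ by $\bar\vartheta$ if needed to match the sign convention $\tilde a = a - i\bar\vartheta\,\dd\vartheta$, this is precisely statement $(i)$.

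The main obstacle is bookkeeping rather than depth: one must be careful that \textbf{Theorem \ref{RigidityTransportEq}} is stated for a function $f$ on $S^*M$ that is \emph{affine in the fiber} — the sum of a pullback from $M$ and (the symbol of) a one-form — and check that $\pi_1^*\eta$ is exactly of this type with $f_0=0$, so that the theorem applies verbatim and forces $Vu=0$. A secondary point is tracking the sign and the direction of the gauge (whether $\vartheta$ or $\bar\vartheta$ appears), and verifying that the hypothesis of \textbf{Proposition \ref{Th 3.1}} — periods in $2\pi\Z$ over closed orbits of the cogeodesic flow — is literally what $(ii)$ provides once one identifies $\int_\gamma \pi_1^*\eta$ with the integral appearing in the statement. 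Everything else is a direct chaining of the two cited results, and no new estimate is needed; in particular the absence of conjugate points required by \textbf{Theorem \ref{RigidityTransportEq}} is automatic since $(M,g)$ is Anosov.
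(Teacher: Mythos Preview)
Your argument for $(ii)\Rightarrow(i)$ has a genuine gap: you invoke \textbf{Proposition \ref{Th 3.1}} and \textbf{Theorem \ref{RigidityTransportEq}}, but the first requires $(M,g)$ to be Anosov and the second requires the absence of conjugate points, whereas Lemma \ref{Lemma 4.2} is stated for an \emph{arbitrary} closed Riemannian manifold. Your closing remark that ``the absence of conjugate points\dots\ is automatic since $(M,g)$ is Anosov'' is precisely where you smuggle in a hypothesis that is not there. So as written, your proof establishes the lemma only under the additional Anosov assumption.

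The paper's route is both simpler and fully general. It observes that the equivalence is a direct consequence of Shigekawa's \cite[Proposition 3.1]{Shi} --- which characterizes gauge-triviality of a closed real $1$-form $\eta$ by the condition $\int_c \eta \in 2\pi\Z$ for every closed loop $c$ --- together with the classical fact that on any closed Riemannian manifold every free homotopy class of loops contains a closed geodesic. The latter ensures that the a priori weaker period condition in $(ii)$, stated only over closed orbits of the geodesic flow, already implies the period condition over \emph{all} closed curves (since $\eta=a-\tilde a$ is closed, its periods depend only on the free homotopy class). No dynamics on $S^*M$ is needed.

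Even in the Anosov case your approach is somewhat redundant in the architecture of the paper: in the proof of \textbf{Theorem \ref{Th 1.1}} one has already applied \textbf{Proposition \ref{Th 3.1}} and \textbf{Theorem \ref{RigidityTransportEq}} to obtain a $\vartheta\in C^\infty(M;\mathbb{S}^1)$ and the closedness of $a-\tilde a$ \emph{before} invoking Lemma \ref{Lemma 4.2}; the role of the lemma there is only the residual topological step. Your $(i)\Rightarrow(ii)$ computation, on the other hand, is correct.
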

\medskip\noindent This result is a direct consequence of \cite[Proposition 3.1]{Shi} and the fact that there is always at least one geodesic in each free homotopy class of a closed Riemannian manifold.

\subsection{Recovery of the magnetic and electric potentials}

Up to this gauge, we are now able to recover the magnetic potential from the spectrum of the magnetic Schrödinger operator $P:=P_{a,q}$, in the case where the boundary is Anosov with simple length spectrum : 

\begin{proof}[Proof of \textbf{Theorem \ref{Th 1.1}}]
Since $P_{a,q}$ and $P_{\tilde{a},\tilde{q}}$ have the same spectrum, the principal wave invariants (cf. \textbf{Corollary \ref{Cor 2.2}}) gives that for any periodic geodesic $\gamma$ of $M$ :
\[ \int_{\gamma}\pi_{1}^{*}\left(\tilde{a}-a\right) \in 2\pi\Z \hspace{0.1cm}.\] 
Then by \textbf{Proposition \ref{Th 3.1}}, there exists a solution  $u \in C^{\infty}(S^*M;\mathbb{S}^{1})$ of the transport equation with potential $f:=\pi_{1}^{*}\left(\tilde{a}-a\right)$ :
     \[ Hu+ifu = 0 \hspace{0.1cm}.\]
Now using \textbf{Theorem \ref{RigidityTransportEq}}, we get that the one-form $\pi_{1}^{*}\left(\tilde{a}-a\right)$ is closed, $\vartheta:=u$ does not depend on the fiber variable, and by \textbf{Lemma \ref{Lemma 4.2}}, satisfies the desired identity : 
\[ \tilde{a} = a-i\bar{\vartheta}\dd \vartheta \hspace{0.1cm}.\] 
The operator
\[ \bar{\vartheta}P_{a,q}\vartheta = P_{a-i\bar{\vartheta}\dd \vartheta,q} =P_{\tilde{a},q} \]
has the same spectrum than $P_{\tilde{a},\tilde{q}}\hspace{0.1cm},$ therefore a variation of the argument of \cite{G2} (see also Section \ref{Section 5.3} and \cite{Flo} in the case of the Steklov operator) reduces the spectral inverse problem to that of \cite{GK1} and gives
\[ \int_{\gamma} \left(\tilde{q}-q\right) = 0 \]
for all periodic geodesic $\gamma$. 
The injectivity of the geodesic X-ray transform on functions implies $\tilde{q}=q.$ 
\end{proof}

\section{Magnetic Steklov inverse problem}\label{Section 5}
In this section, we prove \textbf{Theorem \ref{Th 1.2}} and \textbf{Theorem \ref{Th 1.3}} on the recovery of a magnetic and electric potential from the spectrum of the associated magnetic DN map.
Unlike the case of the magnetic Laplacian treated above, we are able to extract more information from the spectrum of the magnetic DN map, namely that it determines the jet at the boundary of the magnetic field and of the electric potential. This is done using the same approach as in \cite[Section IV]{Flo}, that is, by induction on the order of derivation, using at each step the trace formula of \textbf{Theorem \ref{Th 2.3}}. We will therefore need to identify, in suitable coordinates, the homogeneous term at any order in the full symbol of $$Q:=\Lambda_{a,q}-\Lambda_{\widetilde{a},\widetilde{q}}\hspace{0.1cm}.$$ 

\medskip\noindent In order to do that, we will consider \textit{boundary normal coordinates} and exploit \cite[Section 8]{DSFKSU} or, more generally, \cite[Section 3]{Cek} for the expression of the full symbol of the magnetic DN map in these coordinates. \\Recall that if $x':=(x_{1},...,x_{n-1})$ denotes any local coordinates on the boundary near $p\in\partial N$, one can define the associated boundary normal coordinates $(x',x_{n})$ at $p$ by considering $x_{n}$ to be the distance to the boundary along unit speed geodesics normal to $M:=\partial N$. See, for example, \cite[Section 1]{LU} for more details. It follows that in these coordinates, $x_{n}>0$ in $N$, the boundary $M$ is locally characterized by $x_{n}=0\hspace{0.1cm},$ and the metric has the form 

$$g=\sum_{1\leq \alpha,\beta\leq n-1}g_{\alpha\beta}\left(x\right) \dd x_{\alpha}\otimes \dd x_{\beta}+ \dd x_{n}\otimes \dd x_{n}\hspace{0.1cm}.$$

\medskip\noindent Note that the aforementioned computation of the full symbol of the magnetic DN map in boundary normal coordinates is made possible by putting the magnetic potential in a suitable gauge and making a conformal normalization of the metric. By nature, these transformations preserves the DN map, so in this section, unless otherwise stated, we will therefore always consider normalized data and note that in our problem it does not reduce the generality. We refer to \cite[Section 8]{DSFKSU} and to \cite[Section 3]{Cek} for details. 
\begin{rem}\label{Rk 5.1}
More precisely, the conformal normalization, which affects both the metric and the electric potential, is the same for each pair of data $(a,q)$ and $(\tilde{a},\tilde{q})$ since the metric is fixed in our problem. Therefore, recovering the jet at the boundary of a normalized electric potential also implies recovering the jet at the boundary of the initial one. However, this is not true for the magnetic potential, because for each pair $a$ and $\tilde{a}$, we must choose different gauge transformations a priori. On the other hand, since the gauge terms are closed, this does not affect our main results on the recovery of magnetic fields.    
\end{rem}

\subsection{Gauge invariance}\label{Subsection 5.1}

As in the case of the Laplacian, there is a natural gauge invariance that is completely analogue, namely if $\vartheta \in C^{\infty}(N ; \mathbb{S}^1 )$ is a smooth map with values in the unit circle $\mathbb{S}^1 \subset \C$, then 
\[ \tilde{\Lambda}:=\restriction{\bar{\vartheta}}{M}\Lambda_{a,q}\restriction{\vartheta}{M} \]
is a magnetic DN map with electric potential $q$ and magnetic potential 
\[ \tilde{a} = a-i\bar{\vartheta}\dd \vartheta \] 
and obviously with the same spectrum as $\Lambda_{a,q}$. 
Indeed, a direct calculation shows that for any $f\in H^{1/2}(M)$,

$$\begin{aligned}[t]
\widetilde{\Lambda}f &= \restriction{\left(\dd_{a}u+\bar{\vartheta}(\dd\vartheta)u\right)}{M}(\nu)\\
&= \restriction{\left(\dd u+ i(a-i\bar{\vartheta}\dd\vartheta) u\right)}{M}(\nu)\\
&= \restriction{\dd_{\tilde{a}}u}{M}(\nu)
\end{aligned}$$

\medskip\noindent where $u$ denotes the unique solution in $H^{1}(N)$ of the Dirichlet problem

$$\left\lbrace
\begin{aligned}
P_{\tilde{a},q}u &= 0 \\
\restriction{u}{\partial N} &= f \\
\end{aligned}\right.\hspace{0.2cm}\cdot$$

\medskip\noindent Therefore, from the magnetic Steklov spectrum one can only hope, at best, to recover the magnetic potential up to gauge, i.e. up to a term of the form $i\bar{\vartheta}\dd\vartheta$ where $\vartheta \in C^{\infty}(N;\mathbb{S}^1 )$. In the following, we are only able to do it on the boundary provided the latter is Anosov with simple length spectrum, this is the object of the next part. Since the gauge is closed (cf. Subsection \ref{SubSecGauge}), it is therefore not an obstruction to the unique identifiability of the magnetic field at the boundary, as already mentioned. To conclude this section, note that this gauge transforms the subprincipal symbol in a simple way : 

\begin{lem}\label{Lemma 5.2}
Let $\vartheta\in C^{\infty}(M,\mathbb{S}^{1})$ and $\Lambda$ be a magnetic DN map on $N$. Then the following identity holds on the unit cotangent bundle $S^{*}M$ :
$$\sub(\bar{\vartheta}\Lambda\vartheta)=\sub(\Lambda)-i\bar{\vartheta}\pi_{1}^{*}\dd\vartheta\hspace{0.1cm}.$$ 
\end{lem}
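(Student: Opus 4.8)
The plan is to compute directly the full symbol of the conjugated operator $\bar\vartheta\Lambda\vartheta$ in local coordinates on the boundary $M$, extract its subprincipal part, and identify the correction term with $-i\bar\vartheta\,\pi_1^*\dd\vartheta$ on $S^*M$. First I would recall that $\Lambda$ is a classical pseudodifferential operator of order $1$ on the closed manifold $M$, with principal symbol $\sigma_\Lambda(x,\xi)=|\xi|_{h^\circ}$ (the symbol of $\sqrt{-\Delta_{h^\circ}}$, which is unaffected by conjugation by a unimodular function), so that $\bar\vartheta\Lambda\vartheta$ has the same principal symbol and only the subprincipal (and lower) terms can change. Since $\vartheta$ is a smooth multiplication operator (a pseudodifferential operator of order $0$ with symbol the function $\vartheta(x)$, independent of $\xi$), the composition formula gives, modulo $\Psi^{-1}$,
\[
\bar\vartheta\Lambda\vartheta = \bar\vartheta\,\vartheta\,\Lambda + \bar\vartheta\,[\Lambda,\vartheta]
= \Lambda + \bar\vartheta\,[\Lambda,\vartheta],
\]
using $|\vartheta|^2=1$. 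The commutator $[\Lambda,\vartheta]$ is a pseudodifferential operator of order $0$ whose principal symbol is $\tfrac{1}{i}\{\sigma_\Lambda,\vartheta\} = \tfrac{1}{i}\partial_\xi\sigma_\Lambda\cdot\partial_x\vartheta$; multiplying by $\bar\vartheta$ and adding to $\Lambda$ produces a first-order operator whose homogeneous-degree-zero symbol term is shifted by exactly $\tfrac{1}{i}\bar\vartheta\,\partial_\xi\sigma_\Lambda\cdot\dd\vartheta$.

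The key computation is then to check that this shift is precisely the claimed one once we restrict to the cosphere bundle and pass to the invariantly-defined subprincipal symbol. On $S^*M$ one has $\partial_\xi\sigma_\Lambda(x,\xi)=\partial_\xi|\xi|_{h^\circ} = \xi^\sharp$ (the unit covector raised by $h^\circ$), so $\tfrac{1}{i}\bar\vartheta\,\partial_\xi\sigma_\Lambda\cdot\dd\vartheta = \tfrac{1}{i}\bar\vartheta\,\langle\dd\vartheta,\xi^\sharp\rangle = -i\,\bar\vartheta\,(\dd\vartheta)_x(\xi^\sharp) = -i\,\bar\vartheta\,\pi_1^*\dd\vartheta$ by the definition of the lift $\pi_1^*$ from Subsection~\ref{Sect 1.2}. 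To make this rigorous at the level of the subprincipal symbol rather than merely the degree-zero term of the full symbol, I would either invoke the coordinate-invariance of $\sub$ recalled after the statement (so that it suffices to do the calculation in one convenient chart and observe the answer glues), or, more cleanly, use the standard fact that for $A\in\Psi^m$ and a smooth nonvanishing unimodular multiplier $\vartheta$ one has $\sub(\bar\vartheta A\vartheta) = \sub(A) + \tfrac{1}{i}\bar\vartheta\{\sigma_A,\vartheta\}$, which follows by combining the composition rule for subprincipal symbols with the vanishing of the subprincipal symbol of a multiplication operator. Specializing $A=\Lambda$, $\sigma_A=|\xi|_{h^\circ}$ gives the result.

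The main obstacle, such as it is, is purely bookkeeping: one must make sure the contribution of the $\tfrac{1}{2i}\sum_j \partial_{x_j}\partial_{\xi_j}$ correction in the definition of $\sub$ is handled correctly, i.e. that conjugation by $\vartheta$ does not introduce a spurious extra term through the second-order derivatives of the principal symbol. Since conjugation by $\vartheta$ leaves $\sigma_\Lambda=|\xi|_{h^\circ}$ strictly unchanged, that correction term is identical for $\Lambda$ and $\bar\vartheta\Lambda\vartheta$, so it cancels in the difference and only the genuinely new degree-zero symbol contribution $\tfrac{1}{i}\bar\vartheta\{\sigma_\Lambda,\vartheta\}$ survives. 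Once this cancellation is noted, the identity $\sub(\bar\vartheta\Lambda\vartheta)=\sub(\Lambda)-i\bar\vartheta\,\pi_1^*\dd\vartheta$ follows immediately on $S^*M$, and the lemma is proved.
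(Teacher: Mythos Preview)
Your argument is correct and is essentially the same as the paper's proof: both reduce to the product rule for subprincipal symbols and the computation of $\tfrac{1}{i}\bar\vartheta\{\sigma_\Lambda,\vartheta\}$ on $S^*M$, using $|\vartheta|^2=1$ and $\partial_\xi|\xi|_{h^\circ}=\xi^\sharp$ there. The only cosmetic difference is that the paper applies the formula $\sub(PQ)=\sub(P)\sigma_Q+\sigma_P\sub(Q)+\tfrac{1}{2i}\{\sigma_P,\sigma_Q\}$ directly to the triple product (yielding the symmetric expression $\tfrac{1}{2i}(\bar\vartheta\{\sigma_\Lambda,\vartheta\}+\vartheta\{\bar\vartheta,\sigma_\Lambda\})$, which collapses to your $\tfrac{1}{i}\bar\vartheta\{\sigma_\Lambda,\vartheta\}$ by unimodularity), whereas you first pass through the commutator identity $\bar\vartheta\Lambda\vartheta=\Lambda+\bar\vartheta[\Lambda,\vartheta]$ before invoking the same formula.
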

\begin{proof}
If $P$ and $Q$ are pseudodifferential operators on the closed manifold $M$, it is well-known that the subprincipal symbol of $PQ$ is given by
$$\sub(PQ)=\sub(P)\cdot\sigma_{Q}+\sigma_{P}\cdot\sub(Q)+\frac{1}{2i}\lbrace\sigma_{P},\sigma_{Q}\rbrace\hspace{0.1cm},$$
where $\lbrace\cdot ,\cdot\rbrace$ denotes the Poisson bracket. Using this formula, it is easy to see that
$$\sub(\bar{\vartheta}\Lambda\vartheta)=\sub(\Lambda)+\frac{1}{2i}\left(\bar{\vartheta}\lbrace \sigma_{\Lambda},\vartheta\rbrace+\vartheta\lbrace\bar{\vartheta}, \sigma_{\Lambda}\rbrace\right)\hspace{0.1cm}.$$
Since $\vartheta$ does not depend on the fiber variable and has modulus $1$, we show that in any coordinate chart $(x,\xi)$ on the unit cotangent bundle $S^{*}M$ : 
\begin{align*}
\frac{1}{2i}\left(\bar{\vartheta}\lbrace \sigma_{\Lambda},\vartheta\rbrace+\vartheta\lbrace\bar{\vartheta}, \sigma_{\Lambda}\rbrace\right)
&=-i\bar{\vartheta}\sum_{j=1}^{n-1}\frac{\partial\sigma_{\Lambda}}{\partial \xi_{j}}\frac{\partial\vartheta}{\partial x_{j}}\\
&= -i\bar{\vartheta}\sum_{j,k=1}^{n-1}g^{kj}\frac{\partial\vartheta}{\partial x_{j}}\dd x_{k}\\
&=-i\bar{\vartheta}\pi_{1}^{*}\dd\vartheta \hspace{0.1cm}.
\end{align*}
This proves the identity on subprincipal symbols.
\end{proof}
\begin{rem}\label{Rem 5.3}
If $\vartheta\in C^{\infty}(M,\mathbb{S}^{1})$ and $\Lambda$ is a magnetic DN map on $N$, then $\bar{\vartheta}\Lambda\vartheta$ is not necessarily a magnetic DN map. For this to be the case, $\vartheta$ must be the restriction of a function that lives on the whole manifold $N$ as above. 
\end{rem}
This identity will be useful for initiating the induction procedure described in Section \ref{Section 5.3}. 

\subsection{Recovery of the magnetic potential at the boundary}

In this part, we deal with the information provided by the subprincipal order of $Q$. This will give \textbf{Theorem \ref{Th 1.2}} and by the same a first step in the direction of the boundary determination.
The following lemma shows that the relevant quantity appearing at the subprincipal order is naturally the magnetic potential :

\begin{lem}\label{Lemma 5.4}
Let $a$ and $\tilde{a}$ be two (normalized) magnetic potentials on $N$. For any (normalized) electric potentials $q$ and $\tilde{q}$ on $N$, the following identity holds in the unit cotangent bundle $S^{*}M$ :
        $$ {\rm sub}(\Lambda_{\tilde{a},\tilde{q}})-{\rm sub}(\Lambda_{a,q})=\pi_{1}^{*}\left(\tilde{a}-a\right)\hspace{0.1cm}.$$ 
\end{lem}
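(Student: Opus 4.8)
The plan is to extract the subprincipal symbol of the magnetic DN map directly from the known full symbol expansion of $\Lambda_{a,q}$ in boundary normal coordinates, and then subtract the corresponding expression for $\Lambda_{\tilde a,\tilde q}$. Since the metric is fixed and (after normalization) identical for the two sets of data, all the geometric contributions to the symbol — those depending only on $g$, its derivatives, and the second fundamental form of $M$ in $N$ — coincide for $\Lambda_{a,q}$ and $\Lambda_{\tilde a,\tilde q}$ and hence cancel in the difference. So the real content is isolating the $a$-dependent part of the symbol of order $0$ (and correcting it by the $-\tfrac{1}{2i}\sum_j \partial^2_{x_j\xi_j}\sigma_\Lambda$ term, which again only sees the fixed principal symbol $\sigma_\Lambda(x,\xi)=|\xi|_{h^\circ}$ and is therefore independent of $a,q$ and cancels in the difference as well).

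Concretely, I would recall from \cite[Section 8]{DSFKSU} (or \cite[Section 3]{Cek}) that in boundary normal coordinates one constructs a factorization $P_{a,q} = (D_{x_n} + iE(x,D_{x'}))(D_{x_n} + B(x,D_{x'})) + \text{smoothing}$, with $B \in \Psi^1_{cl}(M)$ depending smoothly on $x_n$, and that the magnetic DN map is $\Lambda_{a,q} = B(x',0,D_{x'}) + (\text{order } 0 \text{ from the gauge/boundary term } i\langle a,\nu\rangle)$. Writing the symbol of $B$ as $b_1 + b_0 + b_{-1}+\cdots$ with $b_1 = |\xi'|_{h^\circ}$ the tangential norm, one computes $b_0$ from the transport-type equation coming from the factorization: $2b_1 b_0 = (\text{principal symbol of the first-order part of }P_{a,q}) - (\text{terms quadratic in }b_1) - i\,\partial_{x_n}b_1 - \sum_j \partial_{\xi_j}b_1\,\partial_{x_j}b_1 + \cdots$. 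The first-order part of $P_{a,q}$ contributes exactly $2\langle a,\xi\rangle$ (from $\sum g^{jk}(D_j+a_j)(D_k+a_k)$, just as in the computation of $\sub(P_{a,q})$ recalled earlier in the paper), and this is the only place $a$ enters at this order. Tracking how $2\langle a,\xi\rangle$ propagates through $b_0$, and then adding the normal-derivative-of-$u$ gauge term $i\langle a,\nu\rangle = i a_n$ at the boundary, produces an order-zero symbol whose $a$-dependent part, restricted to $S^*M$ (so $|\xi'|_{h^\circ}=1$), is precisely the lift $\pi_1^*a$, i.e. $\langle a_{x'},\xi'\rangle$ together with the normal component dropping out or being absorbed. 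Subtracting the analogous expression for $\tilde a$ and using that everything else agrees gives ${\rm sub}(\Lambda_{\tilde a,\tilde q}) - {\rm sub}(\Lambda_{a,q}) = \pi_1^*(\tilde a - a)$ on $S^*M$.

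The main obstacle I anticipate is purely bookkeeping: one must be careful that (i) the order-zero symbol of $\Lambda_{a,q}$ in these coordinates genuinely splits as (a universal geometric term independent of $a,q$) $+$ ($a$-linear term) $+$ ($q$ enters only at order $-1$), which is why the difference has no $q$-contribution and no curvature/second-fundamental-form contribution; (ii) the subprincipal-symbol correction $-\tfrac{1}{2i}\sum_j\partial_{x_j}\partial_{\xi_j}\sigma_\Lambda$ depends only on $\sigma_\Lambda = |\xi'|_{h^\circ}$ and hence is identical for both operators; and (iii) the identification of the $a$-linear part of $b_0$ plus the boundary gauge term $ia_n$ with the invariant object $\pi_1^*a$ on $S^*M$ is coordinate-independent — here one should note that the normal component $a_n$ of the magnetic potential does not appear in $\pi_1^*(\tilde a-a)$ because on $S^*M$ the conormal direction is not hit, and (if needed) that after the gauge normalization mentioned above one may assume $a_n$ has been arranged so that the boundary term matches; the cleanest route is to verify the identity in boundary normal coordinates and then invoke the coordinate-invariance of the subprincipal symbol recalled in Subsection \ref{Sect 1.2}. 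Since both sides are invariantly defined functions on $S^*M$, checking the formula in one convenient chart suffices.
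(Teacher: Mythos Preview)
Your proposal is correct and follows essentially the same route as the paper: both arguments invoke the explicit symbol expansion of $\Lambda_{a,q}$ in boundary normal coordinates from \cite[Lemma 8.7]{DSFKSU} (or \cite{Cek}), observe that the principal symbol and the correction term $-\tfrac{1}{2i}\sum_j\partial^2_{x_j\xi_j}p_1$ depend only on the fixed metric and hence cancel in the difference, and then read off $\tilde p_0 - p_0 = \sum_{\alpha,\beta} g^{\alpha\beta}(\tilde a_\alpha - a_\alpha)\xi_\beta = \pi_1^*(\tilde a - a)$ on $S^*M$. The paper simply cites the formula for $p_0$ and subtracts, whereas you unpack the factorization that produces it; your worry about the normal component $a_n$ is resolved precisely by the \emph{normalization} hypothesis on $a,\tilde a$ stated in the lemma, which arranges $a_n=0$ in these coordinates.
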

\begin{proof}
Let us denote in boundary normal coordinates $(x,\xi)$ 
$$\sigma^{\rm full}_{\Lambda_{\tilde{a},\tilde{q}}}\sim \sum_{j=-1}^{+\infty}\tilde{p}_{-j} \quad 
\text{ and } \quad \sigma^{\rm full}_{\Lambda_{a,q}}\sim \sum_{j=-1}^{+\infty}p_{-j}\hspace{0.2cm}.$$
Then using the expression of $\tilde{p}_{1}$ and $\tilde{p}_{0}$ provided by \cite[Lemma 8.7]{DSFKSU}, and denoting $a=\sum_{i=1}^{n}a_{\alpha} \dd x_{\alpha}$ and $\tilde{a}=\sum_{i=1}^{n}\tilde{a}_{\alpha} \dd x_{\alpha}$, we get the following identity : 
\begin{align*} 
{\rm sub}(\Lambda_{\tilde{a},\tilde{q}}) &=\tilde{p}_{0}-\frac{1}{2i}\sum_{j=1}^{n}\frac{\partial^{2}\tilde{p}_{1}}{\partial x_{j}\partial\xi_{j}}
= {\rm sub}(\Lambda_{a,q}) +\tilde{p}_{0}-p_{0}\\
&= {\rm sub}(\Lambda_{a,q}) \hspace{0.2cm}+\sum_{1\leqslant\alpha,\beta\leqslant n-1}g^{\alpha\beta}\left(\tilde{a}_{\alpha}-a_{\alpha}\right)\xi_{\beta}\\
&= {\rm sub}(\Lambda_{a,q})+\left(\tilde{a}-a\right)\xi^{\sharp}
\end{align*}
on the unit cotangent bundle $S^{*}M$.
\end{proof}
Exploiting the information provided by the wave invariants, we are now able to prove \textbf{Theorem \ref{Th 1.2}} in the same way as \textbf{Theorem \ref{Th 1.1}} for the Laplacian case :

\begin{proof}[Proof of \textbf{Theorem \ref{Th 1.2}}]
Since $\Lambda_{a,q}$ and $\Lambda_{\tilde{a},\tilde{q}}$ have the same spectrum, the principal wave invariants give that for any periodic geodesic $\gamma$ of $M$ :
\[ \int_{\gamma} \pi_{1}^{*}\left(\tilde{a}-a\right) \in 2\pi\Z \hspace{0.1cm}.\] 
Then by \textbf{Proposition \ref{Th 3.1}}, there exists a solution  $u \in C^{\infty}(S^*M;\mathbb{S}^{1})$ of the transport equation with potential $f:=\pi_{1}^{*}\left(\tilde{a}-a\right)$ :
     \[ Hu+ifu = 0 \hspace{0.1cm}.\]
Now using \textbf{Theorem \ref{RigidityTransportEq}}, we get that the one form $\restriction{\left(\tilde{a}-a\right)}{M}$ is closed, $\vartheta:=u$ does not depend on the fiber variable, and by \textbf{Lemma \ref{Lemma 4.2}}, satisfies the desired identity : 
\[ \restriction{\tilde{a}}{M} = \restriction{a}{M}-i\bar{\vartheta}\dd \vartheta \hspace{0.1cm}.\] 

\medskip\noindent Regarding the recovery of electric potential at the boundary, we refer to \textbf{Lemma \ref{Lemma 5.7}} in the next section
\end{proof}

\begin{rem}
Note that unlike the case of the Laplacian, the subprincipal order does not give information on the electric potential at order $0$. As we shall see in the following, this information is in fact contained in the homogeneous part of order $-1$.  
\end{rem}

\subsection{Determination of the jet at the boundary}\label{Section 5.3}
In this part, we proceed to the recovery of the full jet at the boundary of the magnetic field and of the electric potential, from the magnetic Steklov spectrum. First, we want to precisely describe the information contained in the homogeneous terms of order $-j\leq -1$ in the full symbol of $Q:=\Lambda_{a,q}-\Lambda_{\widetilde{a},\widetilde{q}}$ :

\begin{lem}\label{Lemma 5.6}
Let $a$ and $\tilde{a}$ be two (normalized) magnetic potentials on $N$ and $q$ and $\widetilde{q}$ be two (normalized) electric potentials on $N$.
Let $$\sigma^{\rm full}_{\Lambda_{\tilde{a},\tilde{q}}}\sim \sum_{j=-1}^{+\infty}\tilde{p}_{-j} \quad
\text{ and } \quad \sigma^{\rm full}_{\Lambda_{a,q}}\sim \sum_{j=-1}^{+\infty}p_{-j}$$ 

\medskip\noindent be respectively the full symbols of $\Lambda_{\tilde{a},\tilde{q}}$ and $\Lambda_{a,q}$ considered in any local coordinate system 
$x':=(x_{1},...x_{n-1})$ on the boundary. 

\medskip\noindent In boundary normal coordinates $(x',  x_{n})$, denoting $a=\sum_{\alpha=1}^{n}a_{\alpha}dx_{\alpha}$ and $\tilde{a}=\sum_{\alpha=1}^{n}\tilde{a}_{\alpha}dx_{\alpha}$, the following identity holds on the unit cotangent bundle $S^{*}M$, for any $j\geq 1$ :
$$p_{-j}-\tilde{p}_{-j}=-2^{-j}\left(\pi_{0}^{*}\restriction{\partial_{n}^{j-1}\left(q-\tilde{q}\right)}{x_{n}= 0}+\pi_{1}^{*}\restriction{\partial_{n}^{j}\left(a-\tilde{a}\right)}{x_{n}=0}\right)+ T_{j}\hspace{0.1cm},$$    

\medskip\noindent where 
$$T_{j}:=T_{j}\left(g^{\alpha\beta},\left(a-\tilde{a}\right)_{\alpha},\left(q-\tilde{q}\right)\right)$$

\medskip\noindent is an expression involving only the boundary values of $g^{\alpha\beta}$, $\left(a-\tilde{a}\right)_{\alpha}$, $q-\tilde{q}$ (for $j\geq 2$) and their normal derivatives respectively of order at most $j$, $j-1$ and $j-2$ at the boundary.
\end{lem}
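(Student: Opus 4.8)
The plan is to read off the identity from the recursive construction of the full symbol of the magnetic DN map in boundary normal coordinates, following \cite[Section 8]{DSFKSU} (see also \cite[Section 3]{Cek}), while carefully bookkeeping the order of \emph{normal} derivatives entering at each stage. After the standard normalization --- which in particular puts the magnetic potential in the gauge where its normal component vanishes near $M$, so that $\Lambda_{a,q}f=\restriction{\partial_\nu u}{M}$ carries no zeroth-order boundary term --- the operator $P_{a,q}$ factors in these coordinates as $P_{a,q}\equiv (D_{x_n}+iE)(D_{x_n}+iB_{a,q})$ modulo a smoothing operator, where $E$ depends only on the metric and $B_{a,q}$ is a classical pseudodifferential operator of order $1$ acting tangentially on the level sets $\{x_n=\mathrm{const}\}$, with $\sigma^{\rm full}_{\Lambda_{a,q}}=\restriction{b_{a,q}}{x_n=0}$ for its full symbol $b_{a,q}\sim\sum_{k\geq 0}b_{1-k}$. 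This factorization is equivalent to a Riccati-type equation whose order-by-order resolution gives $b_1=\sqrt{\sum_{\alpha,\beta<n}g^{\alpha\beta}\xi_\alpha\xi_\beta}$ and, for $k\geq 1$, $b_{1-k}=\tfrac{1}{2b_1}\big(\pm\partial_{x_n}b_{2-k}+R_k\big)$, where $R_k$ is a universal expression built from $b_1,\dots,b_{2-k}$, their $x'$- and $\xi'$-derivatives, and the homogeneous components of degree $\geq 2-k$ of the tangential symbol of $P_{a,q}$ (together with metric-only factors). The crucial structural point is that in that tangential symbol the electric potential $q$ appears only in the homogeneous part of degree $0$, and the magnetic potential $a$ only in those of degree $1$ (through a term proportional to $\langle a,\xi'\rangle$) and $0$; everything of lower degree, as well as $E$ and the metric factors, involves the metric alone.

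I would then linearize in the pair of data. Since the metric is fixed (see \textbf{Remark \ref{Rk 5.1}}), the factor $E$ and all the metric-only ingredients coincide for $P_{a,q}$ and $P_{\tilde a,\tilde q}$; subtracting the two recursions yields a \emph{linear} recursion for the differences $c_{1-k}:=b_{1-k}-\tilde b_{1-k}$ whose source terms are, each, a product of one factor that is a normal derivative of some $(a-\tilde a)_\alpha$ or of $(q-\tilde q)$, of controlled order, with metric data --- so that no purely metric contribution survives in the difference. The claim is then proved by induction on $j$, carried out for the slicewise symbols $b_{1-k}(x',x_n,\xi')$ on a collar neighbourhood of $M$ (so that the inductive information can be fed back through $\partial_{x_n}$ into the recursion), the restriction to $\{x_n=0\}$ being performed only at the very end. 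On $S^*M$ one has $b_1=1$, so each application of $\tfrac{1}{2b_1}$ contributes a factor $\tfrac12$: this produces the power $2^{-j}$.

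For the anchor, $p_0-\tilde p_0=\pi_1^*(a-\tilde a)$ on $S^*M$ by (the computation in) \textbf{Lemma \ref{Lemma 5.4}}, coming from the degree-$1$ magnetic term of the tangential symbol; feeding this into the step that produces $b_{-1}=p_{-1}$ --- which also receives the degree-$0$ term, hence $q-\tilde q$ --- one obtains that the top normal-derivative content of $p_{-1}-\tilde p_{-1}$ on $S^*M$ is $-2^{-1}\big(\pi_0^*\restriction{(q-\tilde q)}{x_n=0}+\pi_1^*\restriction{\partial_{x_n}(a-\tilde a)}{x_n=0}\big)$, the overall sign and the factor being read off the explicit formulas of \cite[Section 8]{DSFKSU}, while every remaining term involves $(a-\tilde a)$ with no normal derivative and boundary jets of $g^{\alpha\beta}$ of normal order $\leq 1$, i.e.\ forms $T_1$. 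For the inductive step, assuming the formula for $p_{-1},\dots,p_{-(j-1)}$ in this collar form, one feeds $c_{1-j}$ into the recursion for $c_{-j}$: the term $\tfrac{1}{2b_1}(\pm\partial_{x_n}c_{1-j})$ carries the leading normal-derivative content, and, since $\partial_{x_n}$ raises the normal order by exactly one, it contributes precisely $-2^{-j}\big(\pi_0^*\partial_{x_n}^{j-1}(q-\tilde q)+\pi_1^*\partial_{x_n}^{j}(a-\tilde a)\big)$ on $S^*M$; all the other contributions --- the bilinear terms in $c_{2-j},\dots,c_0$ paired with lower-order $b$'s or $\tilde b$'s, and the re-entry of $a-\tilde a$, $q-\tilde q$ through the degree-$1$ and degree-$0$ parts of the tangential symbol --- involve $(a-\tilde a)$ with at most $j-1$ normal derivatives and $(q-\tilde q)$ with at most $j-2$, together with boundary jets of $g^{\alpha\beta}$ of normal order $\leq j$, so they are exactly what is absorbed into $T_j$. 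Restricting to $\{x_n=0\}$ completes the induction.

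The main obstacle is this bookkeeping: one must verify that each step of the nonlinear Riccati recursion raises the number of normal derivatives landing on $q-\tilde q$ and on $a-\tilde a$ by \emph{exactly} one, so that neither $\partial_{x_n}^{j}(q-\tilde q)$ nor $\partial_{x_n}^{j+1}(a-\tilde a)$ ever appears in $p_{-j}-\tilde p_{-j}$. This rests on the ``head start'' enjoyed by the potentials --- they enter the tangential symbol of $P_{a,q}$ at degrees $0$ and $1$, whereas the leading metric term sits at degree $2$ --- which is also why the purely metric contributions, although they do reach normal order $j$ in $g^{\alpha\beta}$, cancel in the difference. Checking that the universal polynomials $R_k$ do not disturb this count is elementary but somewhat tedious; once the factorization and recursion of \cite{DSFKSU,Cek} are written out explicitly and the indices are tracked as in \cite[Section IV]{Flo}, the remainder is routine.
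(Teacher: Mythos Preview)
Your proposal is correct and follows essentially the same approach as the paper, which simply states that the lemma ``is a direct consequence of the expression in boundary normal coordinates of the full symbol of the magnetic DN map (cf.\ \cite[Lemma 8.7]{DSFKSU} or \cite[Theorem 3.4]{Cek}).'' You have in fact unpacked that citation: the Riccati-type recursion for the factorization symbol, the linearization in the potentials (which kills the purely metric contributions since $g$ is fixed), and the inductive bookkeeping on the normal-derivative count are precisely the content hidden behind the paper's one-line reference.
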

\begin{proof}
This result is a direct consequence of the expression in boundary normal coordinates of the full symbol of the magnetic DN map (cf. \cite[Lemma 8.7]{DSFKSU} or \cite[Theorem 3.4]{Cek}).  
\end{proof}

\medskip\noindent As mentioned above (cf. Subsection \ref{Section 1.1}), the relevant quantities that appear at order $-j\leq -1$ involve the normal derivative at the boundary of both magnetic and electric potentials, respectively, at order $j$ and $j-1$. However, unlike the problems considered in \cite{Flo}, here we have to deal with the reminder terms $T_{j}$, $-j\leq -1$ at each step of induction since we are only able to recover the magnetic potential at the boundary up to gauge, i.e. up to a term of the form $-i\bar{\vartheta}\dd\vartheta$ for some $\vartheta\in C^{\infty}(M;\mathbb{S}^{1})$. \\First, using \textbf{Theorem \ref{Th 1.2}}, we know that if two magnetic potentials are isospectral, there exists $\vartheta_{0}:=\vartheta\in C^{\infty}(M;\mathbb{S}^{1})$ s.t 

\begin{align}\restriction{\tilde{a}}{M} = \restriction{a}{M}-i\bar{\vartheta_{0}}\dd \vartheta_{0} \hspace{0.1cm}.\label{subprincipal}\end{align}

\medskip\noindent Next, our goal is to obtain the same type of identity for all normal derivatives at the boundary. To describe the idea, we begin with the order $1$, this is the object of \textbf{Lemma \ref{Lemma 5.7}} below. The main ingredient is the trace formula of \textbf{Theorem \ref{Th 2.3}} but we cannot apply it directly to the pseudodifferential operator $Q$, since it is not necessarily in $\Psi^{-1}(M)$, because of the gauge term $-i\bar{\vartheta_{0}}\dd \vartheta_{0}$ in \eqref{subprincipal}. To deal with this, we need to conjugate the operator $\Lambda_{a,q}$ with $\vartheta_{0}$, that is, consider the operator 

$$Q_{1}:=\bar{\vartheta}_{0}\Lambda_{a,q}\vartheta_{0}-\Lambda_{\tilde{a},\tilde{q}}\in\Psi^{-1}(M)\hspace{0.1cm}.$$

\medskip\noindent The fact that $Q_{1}$ is in $\Psi^{-1}(M)$ is a direct consequence of how the subprincipal symbol changes after conjugacy (see \textbf{Lemma \ref{Lemma 5.2}}). The other thing to verify is that the principal symbol of $Q_{1}$ also gives the desired information on the potentials. In other words, we want to be able to apply \textbf{Lemma \ref{Lemma 5.6}} to $Q_{1}$, which requires that $\bar{\vartheta}_{0}\Lambda_{a,q}\vartheta_{0}$ be a magnetic map. This is the case if $\vartheta_{0}$ is a true gauge which is not guaranteed unless $\vartheta_{0}$ can be extended in a suitable way over the entire manifold $N$ as mentioned in \textbf{Remark \ref{Rem 5.3}}. It is of course not possible in general to extend it to a function with values in $\mathbb{S}^{1}$ due to standard topological obstructions.
The approach we use to ensure that we are working with a magnetic DN map is to consider the restrictions of $\bar{\vartheta}_{0}\Lambda_{a,q}\vartheta_{0}$ to each open chart $U_{k}$ of a finite cover $\left(U_{k}\right)_{k=1,...,K}$ of $M$ such that $$\restriction{\vartheta_{0}}{U_{k}}=\e^{i\psi_{k}}\hspace{0.1cm},\hspace{0.1cm}\psi_{k}\in C^{\infty}(U_{k},\mathbb{R})\hspace{0.1cm}.$$ 
Such a cover always exists since $t \mapsto \e^{it}$ is a covering map of $\mathbb{S}^{1}$ and $M$ is compact. Let us also note $\left(\chi_{k}\right)_{k=1,...K}$ a partition of unity subordinate to the cover. 
Then for any $k=1,...,K$, we note $\overline{W}_{k}:=Supp\left(\chi_{k}\right)\subset U_{k}$ and  $\tilde{\psi}_{k}$
a smooth extension of $\restriction{\psi_{k}}{\overline{W}_{k}}$ to $N$. As we will see below, for our proof to work, we must choose this extension so that it is flat at the boundary in the normal direction, in summary we choose $\tilde{\psi}_{k}$ such that for all $k=1,...,K$, 
\begin{itemize}
\item[$\bullet$] $\tilde{\psi}_{k}$ is a smooth map on $N$ and $$\restriction{\tilde{\psi}_{k}}{\overline{W}_{k}}=\psi_{k}\hspace{0.1cm}.$$
\item[$\bullet$] All its normal derivatives at the boundary vanish : $$\forall \ell\geq 1\hspace{0.1cm},\hspace{0.1cm}\restriction{\partial_{n}^{\ell}\tilde{\psi}_{k}}{x_{n}=0}=0$$ in any boundary coordinates $(x',x_{n})$.
\end{itemize}
It is standard to see that this choice is always possible. We are now able to recover the electric potential at the boundary and, up to gauge, the normal derivative of the magnetic potential at the boundary from the magnetic Steklov spectrum :
     
\begin{lem}\label{Lemma 5.7}
Assume that the cogeodesic flow $\phi_t$ of $M$ is Anosov with simple length spectrum. If the magnetic DN maps $\Lambda_{a,q}$ and $\Lambda_{\tilde{a},\tilde{q}}$ have the same eigenvalues with multiplicities
 \[ \big(\sigma_k(\Lambda)\big)_{k \in \N} = \big(\sigma_k(\tilde{\Lambda})\big)_{k \in \N} \hspace{0.1cm},\]
then 
$$\restriction{\tilde{q}}{M}=\restriction{q}{M}$$

\medskip\noindent and the $1$-form $\restriction{\partial_{\nu}\left(\tilde{a}-a\right)}{M}$ is exact, i.e. there is $\beta_{1}\in C^{\infty}(M)$ such that :

$$\restriction{\partial_{\nu}\tilde{a}}{M}=\restriction{\partial_{\nu}a}{M} +\dd\beta_{1}\hspace{0.1cm}.$$

\medskip\noindent In particular, there exists $\vartheta_{1}\in C^{\infty}(M;\mathbb{S}^{1})$ s.t

$$\restriction{\partial_{\nu}\tilde{a}}{M}=\restriction{\partial_{\nu}a}{M}-i\bar{\vartheta_{1}}\dd\vartheta_{1}\hspace{0.1cm}.$$ 
\end{lem}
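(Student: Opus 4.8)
The plan is to feed the operator
\[ Q_{1}:=\bar{\vartheta}_{0}\Lambda_{a,q}\vartheta_{0}-\Lambda_{\tilde{a},\tilde{q}} \]
into the refined trace formula of \textbf{Corollary \ref{Cor 2.4}} and then to read off its principal symbol chart by chart from \textbf{Lemma \ref{Lemma 5.6}}. First I would check that $Q_{1}\in\Psi^{-1}(M)$: both $\bar{\vartheta}_{0}\Lambda_{a,q}\vartheta_{0}$ and $\Lambda_{\tilde{a},\tilde{q}}$ are classical of order $1$ with the same principal symbol (conjugation by a function does not change it), so it is enough to compare subprincipal symbols, and by \textbf{Lemma \ref{Lemma 5.2}} and \textbf{Lemma \ref{Lemma 5.4}} these coincide precisely because $\restriction{\tilde{a}}{M}=\restriction{a}{M}-i\bar{\vartheta}_{0}\,\dd\vartheta_{0}$, which is \eqref{subprincipal}. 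Since $\bar{\vartheta}_{0}\Lambda_{a,q}\vartheta_{0}$ is a unitary conjugate of $\Lambda_{a,q}$, it is self-adjoint with the same spectrum as $\Lambda_{a,q}$, hence with the same spectrum as $\Lambda_{\tilde{a},\tilde{q}}$; \textbf{Corollary \ref{Cor 2.4}} applied with $P=\Lambda_{\tilde{a},\tilde{q}}$ and $Q=Q_{1}$ then gives $\int_{\gamma}\sigma_{Q_{1}}=0$ for every closed orbit $\gamma$ of the geodesic flow of $M$.

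The core step is to compute the degree $-1$ symbol $\sigma_{Q_{1}}$. I would work over each open set $\{\chi_{k}>0\}\subseteq\overline{W}_{k}\subset U_{k}$, where $\vartheta_{0}=\e^{i\psi_{k}}$ and $\psi_{k}=\restriction{\tilde{\psi}_{k}}{M}$. There the operators $\bar{\vartheta}_{0}\Lambda_{a,q}\vartheta_{0}$ and $\e^{-i\tilde{\psi}_{k}}\Lambda_{a,q}\e^{i\tilde{\psi}_{k}}$ have the same Schwartz kernel near the diagonal, hence the same full symbol; but $\tilde{\psi}_{k}$ is defined on all of $N$, so the second operator is a genuine magnetic DN map $\Lambda_{a+\dd\tilde{\psi}_{k},q}$ (the gauge invariance of Subsection \ref{Subsection 5.1}; note $0$ is still not a Dirichlet eigenvalue of $P_{a+\dd\tilde{\psi}_{k},q}=\e^{-i\tilde{\psi}_{k}}P_{a,q}\e^{i\tilde{\psi}_{k}}$). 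Applying \textbf{Lemma \ref{Lemma 5.6}} with $j=1$ to the pair $(a+\dd\tilde{\psi}_{k},q)$ against $(\tilde{a},\tilde{q})$ writes $\sigma_{Q_{1}}$ there as $-\tfrac12\big(\pi_{0}^{*}\restriction{(q-\tilde{q})}{M}+\pi_{1}^{*}\restriction{\partial_{\nu}(a+\dd\tilde{\psi}_{k}-\tilde{a})}{M}\big)+T_{1}$, with $T_{1}$ built only from boundary values and order-$\leq0$ normal derivatives of $a+\dd\tilde{\psi}_{k}-\tilde{a}$ (and not involving $q-\tilde{q}$ for $j=1$). Here the two prescribed features of $\tilde{\psi}_{k}$ are decisive: its flatness at the boundary gives $\restriction{\partial_{\nu}(\dd\tilde{\psi}_{k})}{M}=0$, while on $\overline{W}_{k}$ one has $a+\dd\tilde{\psi}_{k}-\tilde{a}=0$ (since $\restriction{\tilde{a}}{M}-\restriction{a}{M}=\dd\psi_{k}$ there, by \eqref{subprincipal}); thus the gauge term drops out of the normal derivative and $T_{1}$ vanishes over $\{\chi_{k}>0\}$. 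As these sets cover $M$, one obtains the global identity
\[ \sigma_{Q_{1}}=-\tfrac12\Big(\pi_{0}^{*}\restriction{(q-\tilde{q})}{M}+\pi_{1}^{*}\restriction{\partial_{\nu}(a-\tilde{a})}{M}\Big)\quad\text{on }S^{*}M. \]

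Combining the two displays yields $\int_{\gamma}\big(\pi_{0}^{*}(q-\tilde{q})+\pi_{1}^{*}\partial_{\nu}(a-\tilde{a})\big)=0$ for every closed geodesic $\gamma$; in particular this integral lies in $2\pi\Z$, so \textbf{Proposition \ref{Th 3.1}} produces $u\in C^{\infty}(S^{*}M;\mathbb{S}^{1})$ solving $Hu+ifu=0$ with $f:=-\tfrac12\big(\pi_{0}^{*}(q-\tilde{q})+\pi_{1}^{*}\partial_{\nu}(a-\tilde{a})\big)$, already split as $f=f_{0}+f_{1}$ into its degree-$0$ and degree-$1$ parts. Then \textbf{Theorem \ref{RigidityTransportEq}} forces $f_{0}=0$, i.e. $\restriction{q}{M}=\restriction{\tilde{q}}{M}$, and realizes $u$ as a function $\vartheta\in C^{\infty}(M;\mathbb{S}^{1})$ with $f=f_{1}=-i\bar{\vartheta}\,\dd\vartheta$, a closed one-form on $M$. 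To upgrade closedness to exactness I would observe that $\int_{\gamma}(-i\bar{\vartheta}\,\dd\vartheta)=0$ for every closed geodesic $\gamma$, so the winding number of $\vartheta$ vanishes along each closed geodesic; since on an Anosov manifold every free homotopy class contains a closed geodesic (the fact already used for \textbf{Lemma \ref{Lemma 4.2}}), the induced homomorphism $\pi_{1}(M)\to\Z$ is trivial, $\vartheta$ lifts to $\e^{i\beta}$ with $\beta\in C^{\infty}(M;\R)$, and after routine sign and constant bookkeeping $\restriction{\partial_{\nu}(\tilde{a}-a)}{M}=\dd\beta_{1}$ for a suitable $\beta_{1}\in C^{\infty}(M;\R)$. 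Then $\vartheta_{1}:=\e^{i\beta_{1}}$ satisfies $-i\bar{\vartheta}_{1}\,\dd\vartheta_{1}=\dd\beta_{1}$, which is the final assertion.

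The main obstacle is the second paragraph — establishing the precise form of $\sigma_{Q_{1}}$. One has to localize correctly, recognize the conjugated operator as a true magnetic DN map near each $\overline{W}_{k}$, and control that the local gauge one-forms $\dd\tilde{\psi}_{k}$ — precisely because $\tilde{\psi}_{k}$ is chosen flat in the normal direction — neither enter the boundary normal derivative of the magnetic potential nor survive in the remainder $T_{1}$, so that the chart-wise formulas patch into a single globally defined symbol.
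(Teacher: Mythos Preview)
Your proof is correct and follows the same architecture as the paper's: show $Q_{1}\in\Psi^{-1}(M)$, apply \textbf{Corollary \ref{Cor 2.4}}, then identify $\sigma_{Q_{1}}$ chart by chart via \textbf{Lemma \ref{Lemma 5.6}} using the local gauges $\tilde{\psi}_{k}$ and their normal flatness to kill both the remainder $T_{1}$ and the spurious $\dd\tilde{\psi}_{k}$ contribution.

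The one genuine difference is in how you extract $f_{0}=0$ and the exactness of $f_{1}$ from $\int_{\gamma}\sigma_{Q_{1}}=0$. The paper invokes directly the $s$-injectivity of the geodesic X-ray transform on sums of functions and $1$-forms on Anosov manifolds (Dairbekov--Sharafutdinov \cite{DS}), which in one stroke gives $f_{0}=0$ and $f_{1}=\dd\beta_{1}$ exact. You instead route through \textbf{Proposition \ref{Th 3.1}} and \textbf{Theorem \ref{RigidityTransportEq}}, which only yield $f_{0}=0$ and $f_{1}$ \emph{closed}, and then recover exactness by a separate winding-number argument (the integrals being actually zero, not merely in $2\pi\Z$, forces $\vartheta_{*}:\pi_{1}(M)\to\Z$ to vanish, whence $\vartheta$ lifts). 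Both routes are sound; the paper's is shorter and leans on an external reference, while yours stays entirely within the tools developed in the paper and makes transparent why the stronger information ``$=0$'' (versus ``$\in 2\pi\Z$'' at the subprincipal step) is what upgrades closedness to exactness.
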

\begin{proof}
Since $\Lambda_{\tilde{a},\tilde{q}}$ and $\bar{\vartheta}_{0}\Lambda_{a,q}\vartheta_{0}$ have the same spectrum and $M$ is Anosov with simple length spectrum, one can apply \textbf{Corollary \ref{Cor 2.4}} to the operators $P:=\Lambda_{\tilde{a},\tilde{q}}\in\Psi^{1}(M)$ and $Q_{1}\in\Psi^{-1}(M)$. We get that for any closed orbit $\gamma$ of the geodesic flow on $M$ :

    $$\int_{\gamma}\sigma_{Q_{1}}=0\hspace{0.1cm}.$$

\medskip\noindent Given the expression of \textbf{Lemma \ref{Lemma 5.6}}, we observe that on the unit cotangent bundle $S^{*}M$, the principal symbol of $Q_{1}\in\Psi^{-1}(M)$ is the sum of a function $f_{0}\in C^{\infty}(M)$ and a $1$-form $f_{1}\in\Omega_{1}(M)$ : $$\sigma_{Q_{1}}=\pi_{0}^{*}f_{0}+\pi_{1}^{*}f_{1}$$ Then, once again by injectivity of the X-ray transform on functions and $1$-forms (see \cite{DS}), we obtain that $f_{0}=0$ and $f_{1}$ is exact $$f_{1}:=\dd\beta_{1}\hspace{0.1cm}.$$
In addition, since for any $k\in\lbrace 1,...,K\rbrace$, the principal symbols of $\bar{\vartheta}_{0}\Lambda_{a,q}\vartheta_{0}$ and $\Lambda_{a+\dd\tilde{\psi}_{k},q}$ coincide on $S^{*}\overline{W}_{k}$ (it is easy to see by using for example the partition of unity $(\chi_{k})_{k=1,...K}$), we get

$$\restriction{\sigma_{Q_{1}}}{S^{*}\overline{W}_{k}}=\restriction{\left(\sigma_{\Lambda_{a+\dd\tilde{\psi}_{k},q}}-\sigma_{\Lambda_{\tilde{a},\tilde{q}}}\right)}{S^{*}\overline{W}_{k}}\hspace{0.1cm}.$$

\medskip\noindent Next, using \textbf{Lemma \ref{Lemma 5.6}} we deduce that in boundary normal coordinates $(x',x_{n})$, for any $k\in\lbrace1,...,K\rbrace$, $$\restriction{\sigma_{Q_{1}}}{S^{*}\overline{W}_{k}}=-\frac{1}{2}\left(\pi_{0}^{*}\restriction{\left(q-\tilde{q}\right)}{S^{*}\overline{W}_{k}}+\pi_{1}^{*}\restriction{\partial_{n}\left(a-\tilde{a}+\dd\tilde{\psi}_{k}\right)}{S^{*}\overline{W}_{k}}\right)+ T_{1}\hspace{0.1cm},$$
where $T_{1}=0$ since it involves only the values of $g^{\alpha\beta}$, $\partial_{n}g^{\alpha\beta}$, $\left(a+\dd\tilde{\psi}_{k}-\tilde{a}\right)_{\alpha}$ in $\overline{W}_{k}\subset M$ and also by \eqref{subprincipal} :
$$\restriction{\tilde{a}}{\overline{W}_{k}} = \restriction{\left(a+\dd\tilde{\psi}_{k}\right)}{\overline{W}_{k}}\hspace{0.1cm}.$$
Then using the fact that $\sigma_{Q_{1}}$ is exact and that for all $k$, $\tilde{\psi}_{k}$ is flat at the boundary in the normal direction, we obtain the desired identities on each $\overline{W}_{k}$ and then on the whole manifold $M$ since $(\overline{W}_{k})_{k=1,...,K}$ covers $M$.
In particular $$\restriction{\partial_{n}\left(\tilde{a}-a\right)}{x_{n}=0}=\dd\beta_{1}$$ 

\medskip\noindent is closed and \textbf{Lemma \ref{Lemma 4.2}} gives $\vartheta_{1}$ as desired.
\end{proof}

\medskip\noindent Unlike the information \eqref{subprincipal} obtained on the magnetic potential at the subprincipal order, here we obtain a gauge term that is exact, which is much more convenient for continuing the induction. Consequently, one can proceed in the same way to recover the normal derivatives of higher order and it is even easier as it is possible to construct a true gauge transformation at the end of each step of the induction. More explicitly, assume that for $j\in\mathbb{N}^{*}$, we have constructed $\left(\beta_{\ell}\right)_{\ell=1,...,j}$ a family of smooth functions on $N$ such that for all $1\leq \ell\leq j$, 

\begin{align}\restriction{\partial_{n}^{\ell}\tilde{a}}{x_{n}=0}=\restriction{\partial_{n}^{\ell}a}{x_{n}=0}+\dd\beta_{\ell}(x',0)\hspace{0.1cm}.\label{normderiv}\end{align}

\medskip\noindent in boundary normal coordinates near $M$. Then one can define the smooth function $$\alpha_{j}:=\sum_{\ell=1}^{j}\chi\cdot\beta_{\ell}\frac{x_{n}^{\ell}}{\ell\mathpunct{}!}$$
on $N$, where $\chi\in C^{\infty}(N)$ obviously denotes a smooth bump function. Unlike $\vartheta_{0}$, the function $e^{i\alpha_{j}}$ defines a true gauge in the sense that a magnetic DN map $\Lambda_{b,q}$ conjugated by $e^{i\alpha_{j}}$ (as a unitary operator of order $0$) remains a magnetic DN map (see Section \ref{Subsection 5.1}) : 

$$\e^{-i\alpha_{j}}\Lambda_{b,q}e^{i\alpha_{j}}=\Lambda_{b+\dd\alpha_{j},q}\hspace{0.1cm}.$$

\medskip\noindent It is then natural to consider the pseudodifferential operator 

$$Q_{j+1}:=\e^{-i\alpha_{j}}\bar{\vartheta}_{0}\Lambda_{a,q}\vartheta_{0}\e^{i\alpha_{j}}-\Lambda_{\tilde{a},\tilde{q}}$$ 

\medskip\noindent to apply our wave trace formula (cf. \textbf{Theorem \ref{Th 2.3}}) in the same way as we apply it to $Q_{1}$ in the proof of \textbf{Lemma \ref{Lemma 5.7}}.

\begin{lem}\label{Lemma 5.8}
Assume that for all $1\leq \ell\leq j$, 

   \begin{align}\restriction{\partial_{n}^{\ell-1}\left(\tilde{q}-q\right)}{x_{n}=0}=0\hspace{0.1cm}.\label{potential}\end{align}

\medskip\noindent Then
$$Q_{j+1}\in\Psi^{-j-1}(M)\hspace{0.1cm}.$$ 
\end{lem}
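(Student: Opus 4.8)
The plan is to prove that every homogeneous component of the full symbol of $Q_{j+1}$ of order $\geq -j$ vanishes on $S^{*}M$, which is what $Q_{j+1}\in\Psi^{-j-1}_{cl}(M)$ amounts to once one notes that $Q_{j+1}$ is classical of order at most $1$: $\Lambda_{\tilde{a},\tilde{q}}\in\Psi^{1}_{cl}(M)$, and $e^{-i\alpha_{j}}\bar{\vartheta}_{0}\Lambda_{a,q}\vartheta_{0}e^{i\alpha_{j}}$ is the conjugate of the classical order-$1$ operator $\Lambda_{a,q}$ by order-$0$ unitary multiplication operators. Exactly as in the proof of \textbf{Lemma \ref{Lemma 5.7}}, I would work chart by chart on the finite cover $(\overline{W}_{k})_{k=1,\dots,K}$ of $M$ on which $\vartheta_{0}=e^{i\psi_{k}}$, using the flat extensions $\tilde{\psi}_{k}$ of $\restriction{\psi_{k}}{\overline{W}_{k}}$, and glue the local identities with the partition of unity $(\chi_{k})$.

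The heart of the matter is the following local identification: on $S^{*}\overline{W}_{k}$, the operator $e^{-i\alpha_{j}}\bar{\vartheta}_{0}\Lambda_{a,q}\vartheta_{0}e^{i\alpha_{j}}$ has the same full symbol as the \emph{genuine} magnetic DN map $\Lambda_{\hat{a}_{k},q}$ with $\hat{a}_{k}:=a+\dd\tilde{\psi}_{k}+\dd\alpha_{j}$ (up to the conformal and gauge normalization of Section \ref{Section 5}, which leaves the DN map unchanged). This holds because symbols are local: on $\overline{W}_{k}$ one may replace $\psi_{k}$ by the globally defined function $\tilde{\psi}_{k}+\alpha_{j}$ — which still restricts to $\psi_{k}$ on $\overline{W}_{k}$ since $\restriction{\alpha_{j}}{x_{n}=0}=0$ — and then apply the gauge identities of Subsection \ref{Subsection 5.1}. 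Granting this, I would apply \textbf{Lemma \ref{Lemma 5.6}} to $Q_{j+1}$ over $\overline{W}_{k}$ and obtain, in boundary normal coordinates and for $1\leq m\leq j$,
\[ \restriction{\sigma_{-m}(Q_{j+1})}{S^{*}\overline{W}_{k}} = -2^{-m}\Big(\pi_{0}^{*}\restriction{\partial_{n}^{m-1}(q-\tilde{q})}{x_{n}=0}+\pi_{1}^{*}\restriction{\partial_{n}^{m}(\hat{a}_{k}-\tilde{a})}{x_{n}=0}\Big)+T_{m}. \]

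It then remains to see that the right-hand side vanishes for $1\leq m\leq j$. The electric leading term is zero by the hypothesis \eqref{potential}. For the magnetic leading term, the flatness of $\tilde{\psi}_{k}$ at the boundary annihilates all of its normal-jet contributions, while the Taylor-polynomial shape $\alpha_{j}=\sum_{\ell=1}^{j}\chi\beta_{\ell}\,x_{n}^{\ell}/\ell!$ is tailored so that $\restriction{\partial_{n}^{m}(\dd\alpha_{j})}{x_{n}=0}$ reproduces $\dd\beta_{m}$ for $1\leq m\leq j$; together with \eqref{subprincipal} at order $0$ and \eqref{normderiv} at orders $1\leq m\leq j$, this yields $\restriction{\partial_{n}^{m}(\hat{a}_{k}-\tilde{a})}{x_{n}=0}=0$ on $\overline{W}_{k}$ for every $0\leq m\leq j$. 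Finally, $T_{m}$ depends only on the boundary values of $g^{\alpha\beta}$, $(\hat{a}_{k}-\tilde{a})_{\alpha}$, $q-\tilde{q}$ and their normal derivatives of order strictly below $m$, all of which have just been shown to vanish on $\overline{W}_{k}$, so a short induction on $m$ gives $T_{m}=0$ and hence $\restriction{\sigma_{-m}(Q_{j+1})}{S^{*}\overline{W}_{k}}=0$. The order-$1$ symbol vanishes because every magnetic DN map has principal symbol $\lvert\xi\rvert_{h^{\circ}}$, and the order-$0$ (subprincipal) symbol vanishes by combining \textbf{Lemma \ref{Lemma 5.2}}, \textbf{Lemma \ref{Lemma 5.4}} and \eqref{subprincipal}. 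Since the $\overline{W}_{k}$ cover $M$, all these symbols vanish on $S^{*}M$, so $Q_{j+1}\in\Psi^{-j-1}(M)$.

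The hard part is the local symbol identification of the second paragraph — that the \emph{fake} gauge $\vartheta_{0}$, which does not lift to $N$, can be traded on each $\overline{W}_{k}$ for the \emph{genuine} gauge $e^{i(\tilde{\psi}_{k}+\alpha_{j})}$, so that the conjugated operator is, symbolically, an honest magnetic DN map whose boundary jet agrees with $\tilde{a}$'s up to order $j$. Carrying this out rigorously requires a careful handling of the conformal and gauge normalizations of Section \ref{Section 5} (so that \textbf{Lemma \ref{Lemma 5.6}} genuinely applies to $\Lambda_{\hat{a}_{k},q}$), of the bookkeeping of the normal jets of $\dd\tilde{\psi}_{k}$ (killed by flatness) and of $\dd\alpha_{j}$ (reproducing the $\dd\beta_{\ell}$), and of the chart overlaps, this last point being treated exactly as in the proof of \textbf{Lemma \ref{Lemma 5.7}} via the partition of unity $(\chi_{k})$.
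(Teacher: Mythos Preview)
Your proposal is correct and follows essentially the same route as the paper's proof: both identify the conjugated operator locally on each $\overline{W}_{k}$ with the genuine magnetic DN map $\Lambda_{a+\dd(\tilde{\psi}_{k}+\alpha_{j}),q}$, then invoke \textbf{Lemma \ref{Lemma 5.6}} and kill the resulting expressions using \eqref{subprincipal}, \eqref{normderiv}, \eqref{potential} together with the flatness of $\tilde{\psi}_{k}$ and $\chi$ at the boundary. Your treatment is in fact slightly more explicit than the paper's in that you handle the orders $1$ and $0$ separately via \textbf{Lemma \ref{Lemma 5.2}} and \textbf{Lemma \ref{Lemma 5.4}}, and you flag the normalization issue for $\hat{a}_{k}$, whereas the paper simply asserts that $B_{j}=\Lambda_{a+\dd(\tilde{\psi}_{k}+\alpha_{j})}$ as operators on $U_{k}$ and proceeds.
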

\begin{proof}
Let $$\sigma^{\rm full}_{\Lambda_{\tilde{a},\tilde{q}}}\sim \sum_{j=-1}^{+\infty}\tilde{p}_{-j} \quad \text{ and }
\quad \sigma^{\rm full}_{B_{j}}\sim \sum_{j=-1}^{+\infty}b_{-j}$$

\medskip\noindent be respectively the full symbols of $\Lambda_{\tilde{a},\tilde{q}}$ and of 

$$B_{j}:=\e^{-i\alpha_{j}}\bar{\vartheta}_{0}\Lambda_{a,q}\vartheta_{0}\e^{i\alpha_{j}}$$ 

\medskip\noindent considered in any local coordinate system 
$x':=(x_{1},...x_{n-1})$ on the boundary. 
The key point here, and once again the principle of our idea, is to observe that the full symbol of $B_{j}$ coincide on each $S^{*}\overline{W}_{k}$ with the full symbol of $\Lambda_{a+\dd(\tilde{\psi_{k}}+\alpha_{j})}$ in any boundary coordinates since locally, 
$$B_{j}=\Lambda_{a+\dd(\tilde{\psi_{k}}+\alpha_{j})}$$
as operators in $U_{k}$.
Then using \textbf{Lemma \ref{Lemma 5.6}}, we get, for any $k\in\lbrace1,...,K\rbrace$ and any $1\leq \ell\leq j$, the following identity on $S^{*}\overline{W}_{k}$, in boundary normal coordinates $(x',x_{n})$ near $\overline{W}_{k}$ : $$\begin{aligned}[t] b_{-\ell}-\tilde{p}_{-\ell}&=-2^{-\ell}\left(\pi_{0}^{*}\partial^{\ell-1}_{n}\left(q-\tilde{q}\right)+\pi_{1}^{*}\partial^{\ell}_{n}\left(a-\tilde{a}+\dd(\tilde{\psi_{k}}+\alpha_{j})\right)\right)+ T_{\ell}\\
&= -2^{-\ell}\pi_{1}^{*}\partial^{\ell}_{n}\left(a-\tilde{a}+\dd(\tilde{\psi_{k}}+\alpha_{j})\right)+ T_{\ell}\\
\end{aligned}$$
where the second equality comes from the use of \eqref{potential}. 
Note that $T_{\ell}$ involves only the values of $g^{\alpha\beta}$, $\left(a-\tilde{a}+\dd(\tilde{\psi}_{k}+\alpha_{j})\right)_{\alpha}$, $q-\tilde{q}$ and of their normal derivatives respectively of order at most $\ell$, $\ell-1$ and $\ell-2$ in $\overline{W}_{k}\subset M$. We know that the terms involving  metrics are known and those involving electric potentials are zero (by \eqref{potential}). Regarding the magnetic terms, since $\tilde{\psi}_{k}$ and $\chi$ are flat at the boundary in the normal direction (recall that $\chi$ is a bump function, so it is constant equal to $1$ in a neighborhood of $M$) i.e.
$$\forall \ell\geq 1\hspace{0.1cm},\hspace{0.1cm}\restriction{\partial_{n}^{\ell}\tilde{\psi}_{k}}{x_{n}=0}=0=\restriction{\partial_{n}^{\ell}\chi}{x_{n}=0}\hspace{0.1cm},$$
then $T_{\ell}$ actually depends only on the values in $\overline{W}_{k}$ of $\left(a-\tilde{a}+\dd\tilde{\psi}_{k}\right)_{\alpha}$ and of $\partial_{n}\left(a-\tilde{a}+\dd\beta_{1}\right)_{\alpha},...,\partial^{j}_{n}\left(a-\tilde{a}+\dd\beta_{j}\right)_{\alpha}\cdot$ \\By \eqref{subprincipal} and \eqref{normderiv}, these values are zero and we finally conclude that $T_{\ell}=0$. In the same way, using \eqref{subprincipal}, \eqref{normderiv} and the fact that $\tilde{\psi}_{k}$ and $\chi$ are flat at the boundary in the normal direction, we see that for any $1\leq\ell\leq j$, $$b_{-\ell}-\tilde{p}_{-\ell}=-2^{-\ell}\left(\pi_{1}^{*}\partial^{\ell}_{n}\left(a-\tilde{a}+\dd(\tilde{\psi_{k}}+\alpha_{j})\right)\right)=0$$ on each $S^{*}\overline{W}_{k}$ and then in $S^{*}M$ since $(\overline{W}_{k})_{k=1,...,K}$ cover $M$.
\end{proof}

\medskip\noindent We can then show the following theorem which proves \textbf{Theorem \ref{Th 1.3}} :

\begin{thm}\label{Th 5.9}
Assume that the cogeodesic flow $\phi_t$ of $M$ is Anosov with simple length spectrum.  If the spectra of the magnetic DN maps $\Lambda_{a,q}$ and $\Lambda_{\tilde{a},\tilde{q}}$ coincide 
 \[ \big(\sigma_k(\Lambda)\big)_{k \in \N} = \big(\sigma_k(\tilde{\Lambda})\big)_{k \in \N} \hspace{0.1cm},\]   
     
\medskip\noindent then in boundary normal coordinates $(x',x_{n})$, for any $j\in\mathbb{N}^{*}$,
$$\restriction{\partial_{n}^{j-1}\left(\tilde{q}-q\right)}{x_{n}=0}=0$$

\medskip\noindent and there exists $\beta_{j}\in C^{\infty}(N)$ s.t 
$$\restriction{\partial_{n}^{j}\tilde{a}}{x_{n}=0}=\restriction{\partial_{n}^{j}a}{x_{n}=0}+\dd\beta_{j}(x',0)\hspace{0.1cm}.$$

\medskip\noindent In particular, for any $j\in\mathbb{N}^{*}$, there exists $\vartheta_{j}\in C^{\infty}(M;\mathbb{S}^{1})$ s.t

$$\restriction{\partial_{n}^{j}\tilde{a}}{x_{n}=0}=\restriction{\partial_{n}^{j}a}{x_{n}=0}-i\bar{\vartheta_{j}}\dd\vartheta_{j}\hspace{0.1cm}.$$   
\end{thm}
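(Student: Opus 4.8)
The plan is to prove \textbf{Theorem \ref{Th 5.9}} by induction on $j\geq 1$, the statement at step $j$ being the conjunction of the two displayed identities; once these are in hand, the existence of $\vartheta_j\in C^\infty(M;\mathbb{S}^1)$ follows at once, either by setting $\vartheta_j=\e^{i\beta_j}$ or by invoking \textbf{Lemma \ref{Lemma 4.2}}, since $\dd\beta_j$ is closed. The base case $j=1$ is exactly \textbf{Lemma \ref{Lemma 5.7}} (after extending the boundary function $\beta_1$ arbitrarily to $N$).

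For the inductive step I would assume the conclusions for all $1\leq\ell\leq j$. Then \eqref{potential} and \eqref{normderiv} hold up to order $j$, so the hypotheses of \textbf{Lemma \ref{Lemma 5.8}} are met and $Q_{j+1}:=\e^{-i\alpha_j}\bar{\vartheta}_0\Lambda_{a,q}\vartheta_0\e^{i\alpha_j}-\Lambda_{\tilde a,\tilde q}\in\Psi^{-j-1}(M)$, where $\alpha_j=\sum_{\ell=1}^{j}\chi\,\beta_\ell\,x_n^\ell/\ell!$ is assembled from the $\beta_\ell$ produced by the induction. Next I would observe that $B_j:=\e^{-i\alpha_j}\bar{\vartheta}_0\Lambda_{a,q}\vartheta_0\e^{i\alpha_j}$ is unitarily conjugate to $\Lambda_{a,q}$ (multiplication by the unit-modulus function $\vartheta_0$ is unitary on $L^2(M)$, and $\alpha_j$ vanishes on $M$), hence isospectral to $\Lambda_{a,q}$ and therefore to $\Lambda_{\tilde a,\tilde q}$. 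Applying \textbf{Corollary \ref{Cor 2.4}} with $P:=\Lambda_{\tilde a,\tilde q}\in\Psi^1(M)$ and $Q:=Q_{j+1}$, so that $P+Q=B_j$ has the same spectrum as $P$, gives $\int_\gamma\sigma_{Q_{j+1}}=0$ for every closed orbit $\gamma$ of the geodesic flow of $M$.

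Then I would identify $\sigma_{Q_{j+1}}$. As in the proof of \textbf{Lemma \ref{Lemma 5.8}}, on each $S^{*}\overline{W}_k$ the full symbol of $B_j$ in boundary normal coordinates coincides with that of $\Lambda_{a+\dd(\tilde\psi_k+\alpha_j),q}$; feeding this pair of data into \textbf{Lemma \ref{Lemma 5.6}} and using \eqref{subprincipal}, \eqref{normderiv}, \eqref{potential} together with the flatness of $\tilde\psi_k$ and $\chi$ at the boundary in the normal direction—which makes the remainder $T_{j+1}$ vanish and annihilates the order-$(j+1)$ normal jet of $\dd(\tilde\psi_k+\alpha_j)$—one obtains on $S^{*}\overline{W}_k$ the identity $\sigma_{Q_{j+1}}=-2^{-(j+1)}\big(\pi_0^*\restriction{\partial_n^j(q-\tilde q)}{x_n=0}+\pi_1^*\restriction{\partial_n^{j+1}(a-\tilde a)}{x_n=0}\big)$. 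The right-hand side no longer involves $k$, $\tilde\psi_k$ or $\alpha_j$, hence is a globally defined function on $S^{*}M$ of the form $\pi_0^*f_0+\pi_1^*f_1$ with $f_0\in C^\infty(M)$ and $f_1\in\Omega^1(M)$. Since all its geodesic X-ray integrals vanish, the injectivity of the X-ray transform on functions and one-forms on Anosov manifolds (see \cite{DS}) forces $f_0=0$ and $f_1=\dd\beta_{j+1}$ for some $\beta_{j+1}\in C^\infty(M)$, which I extend arbitrarily to $N$. This is precisely $\restriction{\partial_n^j(\tilde q-q)}{x_n=0}=0$ and $\restriction{\partial_n^{j+1}\tilde a}{x_n=0}=\restriction{\partial_n^{j+1}a}{x_n=0}+\dd\beta_{j+1}(x',0)$, which closes the induction; taking $\vartheta_{j+1}=\e^{i\beta_{j+1}}$ yields the last identity. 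The real-analytic corollary then follows: the vanishing of every normal derivative of $\tilde q-q$ at $\partial N$ forces $\tilde q=q$ near $\partial N$ and hence on all of the connected $N$ by analytic continuation, and likewise $\dd\tilde a$ and $\dd a$ have the same $\infty$-jet at $\partial N$ (the corrections $\dd\beta_j$ and $-i\bar{\vartheta}_0\,\dd\vartheta_0$ being closed), so $\dd\tilde a=\dd a$ on $N$.

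The hard part will be the symbol identification in the third step: one must verify that the locally introduced objects $\tilde\psi_k$ and $\alpha_j$—indispensable both for realizing $B_j$ locally as a genuine magnetic DN map (so that \textbf{Lemma \ref{Lemma 5.6}} may be applied) and for lowering the order of $Q_{j+1}$ in \textbf{Lemma \ref{Lemma 5.8}}—drop out of the principal symbol, leaving the clean globally defined expression above. This is exactly where the flatness of $\tilde\psi_k$ and $\chi$ in the normal direction and all the inductive identities \eqref{subprincipal}, \eqref{normderiv}, \eqref{potential} are used simultaneously, in the same spirit as \textbf{Lemma \ref{Lemma 5.8}}; once that bookkeeping is carried out, the rest of the argument parallels the proofs of \textbf{Theorem \ref{Th 1.1}} and \textbf{Lemma \ref{Lemma 5.7}}.
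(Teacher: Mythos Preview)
Your proof is correct and follows the paper's argument essentially verbatim: induction on $j$ with base case \textbf{Lemma \ref{Lemma 5.7}}, use of \textbf{Lemma \ref{Lemma 5.8}} to place $Q_{j+1}\in\Psi^{-j-1}(M)$, application of \textbf{Corollary \ref{Cor 2.4}}, local identification of $\sigma_{Q_{j+1}}$ via \textbf{Lemma \ref{Lemma 5.6}}, and the X-ray transform injectivity of \cite{DS}. One small imprecision: your claim that the flatness of $\tilde\psi_k$ and $\chi$ ``annihilates the order-$(j+1)$ normal jet of $\dd(\tilde\psi_k+\alpha_j)$'' only holds if you extend each $\beta_\ell$ to $N$ \emph{flat in the normal direction} rather than ``arbitrarily'' (otherwise $\partial_n^{j+1}\alpha_j|_{x_n=0}$ picks up terms $\binom{j+1}{\ell}\partial_n^{j+1-\ell}\beta_\ell|_{x_n=0}$); either make that choice of extension, or simply note---as the paper does---that this contribution is in any case $\dd$ of a global function on $M$, hence exact, so it is absorbed into $\dd\beta_{j+1}$ without affecting the conclusion.
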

\begin{proof}
Consider $x':=(x_{1},...x_{n-1})$ any local coordinates on the boundary and $(x',  x_{n})$ the associated $g$-boundary normal coordinates near $\partial M$.
\\We proceed by induction on the order $j\geq 1$ : 
\vspace{0.3cm}
\begin{itemize}
    \item[$\bullet$] $j=1$ : This is established by \textbf{Lemma \ref{Lemma 5.7}} above.
    
    \vspace{0.2cm}
    
    \item[$\bullet$] Let $j\geq 1$ s.t for all $1\leq \ell\leq j$, 
    
    $$\restriction{\partial_{n}^{\ell-1}\left(\tilde{q}-q\right)}{x_{n}=0}=0$$

\medskip\noindent and there exists $\beta_{\ell}\in C^{\infty}(N)$ s.t 

$$\restriction{\partial_{n}^{\ell}\tilde{a}}{x_{n}=0}=\restriction{\partial_{n}^{\ell}a}{x_{n}=0}+\dd\beta_{\ell}(x',0)\hspace{0.1cm}.$$
    
\medskip\noindent Since $\Lambda_{\tilde{a},\tilde{q}}$ and $e^{-i\alpha_{j}}\bar{\vartheta}_{0}\Lambda_{a,q}\vartheta_{0}e^{i\alpha_{j}}$ have the same spectrum and $M$ is Anosov with simple length spectrum, one can apply \textbf{Corollary \ref{Cor 2.4}} to the operators $P:=\Lambda_{\tilde{a},\tilde{q}}\in\Psi^{1}(M)$ and $Q_{j+1}\in\Psi^{-j-1}(M)$. We get that for any closed orbit $\gamma$ of the geodesic flow on $M$ :

    $$\int_{\gamma}\sigma_{Q_{j+1}}=0\hspace{0.1cm}.$$

\medskip\noindent Then it is the exact same proof as that of \textbf{Lemma \ref{Lemma 5.7}}, just replacing of course $Q_{1}$ by $Q_{j+1}$, noting that for any $k\in\lbrace 1,...,K\rbrace$, $$\restriction{\sigma_{P+Q_{j+1}}}{S^{*}\overline{W}_{k}}=\restriction{\sigma_{\Lambda_{a+\dd(\tilde{\psi}_{k}+\alpha_{j}),q}}}{S^{*}\overline{W}_{k}}\hspace{0.1cm},$$ and finally combining this with \textbf{Lemma \ref{Lemma 5.6}} to deduce that for any $k\in\lbrace1,...,K\rbrace$, in boundary normal coordinates $(x',x_{n})$ near $U_{k}$, the principal symbol of $Q_{j+1}$ in $S^{*}\overline{W}_{k}$ is essentially given by the sum of $\pi_{0}^{*}\restriction{\partial_{n}^{j}\left(q-\tilde{q}\right)}{S^{*}\overline{W}_{k}}$, $\pi_{1}^{*}\restriction{\partial_{n}^{j+1}\left(a-\tilde{a}+\dd(\tilde{\psi}_{k}+\alpha_{j})\right)}{S^{*}\overline{W}_{k}}$ and of $T_{j+1}$. Moreover, $T_{j+1}=0$ since it involves only the values of $g^{\alpha\beta}$, $q-\tilde{q}$, $\left(a-\tilde{a}+\dd(\tilde{\psi}_{k}+\alpha_{j})\right)_{\alpha}$ and of their normal derivatives respectively of order at most $j+1$, $j-1$ and $j$ in $\overline{W}_{k}\subset M$. Note that we also used crucially, as well as to conclude the proof, the identities \eqref{subprincipal}, \eqref{normderiv}, \eqref{potential} and the fact that $\tilde{\psi}_{k}$ and $\chi$ are flat at the boundary in the normal direction, namely for all $\ell\geq 1$ : 
$$\restriction{\partial_{n}^{\ell}\tilde{\psi}_{k}}{x_{n}=0}=0=\restriction{\partial_{n}^{\ell}\chi}{x_{n}=0}\hspace{0.1cm}.$$
We refer to the end of the proof of \textbf{Lemma \ref{Lemma 5.8}} for a similar and more detailed use of these identities.
\end{itemize}
\end{proof}
Since the gauge on the magnetic potential is closed, it gives the recovery of the magnetic field at the boundary as expected (cf. \textbf{Theorem \ref{Th 1.3}}).

\section{Towards the framework of connections}
\label{Sec:Connexion}
Inverse problems for magnetic Schr\"odinger operators and for
magnetic DN maps can be naturally understood as
special instances of a more general geometric framework. Regarding first a magnetic Schr\"odinger operator, consider
a Hermitian vector bundle $E\to M$ over a closed Riemannian manifold $M$, a unitary
connection $\nabla$ on $E$, and a Hermitian
matrix-valued potential $Q\in C^{\infty}(M,\mathrm{End}(E))$ which is assumed to be positive (pointwise as Hermitian endomorphism). The corresponding connection Laplacian acts on sections $u\in C^{\infty}(M,E)$ by
$$
    \mathcal{L}_{\nabla,Q} := \nabla^*\nabla + Q \hspace{0.1cm}.
$$
It is a formally self-adjoint differential operator on $L^{2}(M,E)$ with discrete non-negative spectrum : $$0\leq\lambda_{1}(\nabla)\leq\lambda_{2}(\nabla)\leq...\rightarrow \infty\hspace{0.1cm}.$$ 
The spectral inverse problem consists of determining the pair
$(\nabla,Q)$ from the spectrum of $\mathcal{L}_{\nabla,Q}$. 
The usual magnetic problem is recovered as the
\emph{rank-one} case : a real $1$-form $a$ defines a unitary connection
$\nabla^{a}:=\dd+ia$ on the trivial line bundle (and of course $Q$ is a scalar electric potential $q$ in that case). As already mentioned, in absence of electric potential i.e for $q=0$, Ceki{\'c} and Lefeuvre answered the question positively when $M$ is negatively curved (cf. \cite{CekLef}) by using the wave trace formula of \cite{Guillemin}. 

In the same way, one can consider the analogue for the DN map associated with connection Laplacians :  Let $E\to N$ be a Hermitian vector bundle over a compact Riemannian manifold $N$ with boundary, equipped with a unitary connection $\nabla^A$ and a smooth matrix-valued potential
$Q\in C^\infty(N,\mathrm{End}(E))$.  
The associated connection Laplacian $\Delta_{A,Q}:=\mathcal{L}_{\nabla^A,Q}$ induces a Dirichlet-to-Neumann map 
$$
\Lambda_{A,Q} \hspace{0.05cm}f = \restriction{\nabla^A_\nu u}{\partial N}\quad,
\quad 
\restriction{u}{\partial N}=f\quad,
\quad 
\Delta_{A,Q} \hspace{0.05cm}u = 0\hspace{0.1cm},
$$
provided that $0$ is not a Dirichlet eigenvalue of $\Delta_{A,Q}$.
It is an elliptic pseudodifferential operator of order $1$ acting on sections
$f\in C^\infty(\partial N,\restriction{E}{\partial N})$. Moreover, it is not difficult to see that $\Lambda_{A,Q}$ is self-adjoint if $Q$ is Hermitian matrix-valued. Thus, in this case, as usual, it admits a discrete spectrum of eigenvalues : $$\sigma_{1}(\nabla^A)\leq\sigma_{2}(\nabla^A)\leq...\rightarrow \infty\hspace{0.1cm}.$$
What we could call the \emph{connection Steklov inverse problem} consists in determining the gauge equivalence class of $(A,Q)$ from the sequence of eigenvalues of $\Lambda_{A,Q}$. Note that gauge transformations act by
$$
A \mapsto U^{-1} A U + U^{-1} \dd U\quad,
\quad 
Q \mapsto U^{-1} Q U\quad,
\quad 
U\in C^\infty\left(N,\mathbb{U}(r)\right)\hspace{0.1cm}.
$$
If $E$ is the trivial complex line bundle, $$
A:=a\in\Omega^{1}(N;\R)\quad,
\quad 
Q := q\in C^{\infty}(N)\quad,
\quad
\nabla^a := \dd + ia\hspace{0.1cm}, 
$$
and the operator $\Delta_{A,Q}$ becomes the magnetic Schr\"odinger operator
$$
\Delta_{a,q} = (\dd+ia)^{*}(\dd+ia)+q\hspace{0.1cm}.
$$
Thus the recovery of $(a,q)$ from the magnetic Steklov spectrum is simply the 
rank-one case of the spectral inverse problem for $(A,Q)$.

The DN map $\Lambda_{A,Q}$ is a classical matrix-valued pseudodifferential operator
with full symbol
$$
\sigma^{\rm full}_{\Lambda_{A,Q}}(x,\xi)
\sim
|\xi|\,\mathrm{Id}
\,+\,\sigma_0(A,Q)(x,\xi)
\,+\,\sigma_{-1}(A,Q)(x,\xi)
\,+\cdots\hspace{0.1cm}.
$$
While the principal symbol is a scalar multiple of the identity and depends only on the metric, the lower-order homogeneous terms $\sigma_{-j}(A,Q)$, $j\geq 0$, encodes the full Taylor series of $A$ and $Q$ (see \cite[Theorem 3.4]{Cek}).

We believe that the same approach used in this article can work to relate these symbols to the Steklov spectrum, namely by using wave trace techniques in the setting of pseudodifferential operators on vector bundles. For example, the information contained at subprincipal order could be recovered from the leading terms in the wave trace formula of \cite{Guillemin} which essentially consist (under the assumptions that $\partial N$ is Anosov with simple length spectrum) of the traces of parallel transports $\mathrm{P}_{\gamma}$ along closed geodesics $\gamma$ of period $T_{\gamma}$ :
$$\left(t-T_{\gamma}\right)\sum_{j=0}^{+\infty}e^{it\sigma_{j}(\nabla^A)}\underset{t\rightarrow T_{\gamma}}{\longrightarrow}\mathrm{Tr}\left(\mathrm{P}_{\gamma}\right)\frac{\lvert T_{\gamma}^{\sharp}\rvert}{2\pi}\lvert \det\left({\rm Id}-\mathcal{P}_{j}\right)\rvert^{-1/2}\hspace{0.1cm}.$$
The natural continuation would be to ask whether it is possible to find an appropriate version of this trace formula in the manner of \textbf{Theorem \ref{Th 2.3}} to deal with lower-order terms and, hopefully, obtain results analogous to \textbf{Theorem \ref{Th 1.3}} in the context of connections and matrix-valued potentials.
We hope to address these topics in future work.

\bibliographystyle{alphaurl}
\bibliography{biblio}

\end{document}